\documentclass[a4paper,11pt,reqno]{amsart}
\usepackage{CJKutf8}

\usepackage{tikz}
\usetikzlibrary{decorations.markings}

\usepackage{amsmath,amsthm,amssymb,amsfonts,amsbsy}
\usepackage{mathtools}      % improves amsmath
\usepackage{mathabx}        % makes symbols (as $\leq$) more beautiful; loaded after mathtools 
\usepackage{enumitem,color,graphicx,cite}
\usepackage[all,pdf]{xy}

\usepackage[dvipsnames]{xcolor}

\usepackage{geometry}
\usepackage[backref=page,hyperindex=true,CJKbookmarks=true,
colorlinks,linkcolor=blue,anchorcolor=red,citecolor=cyan]{hyperref}

\theoremstyle{plain}
\newtheorem{theorem}{Theorem}[section]
\newtheorem{proposition}[theorem]{Proposition}
\newtheorem{lemma}[theorem]{Lemma}
\newtheorem{corollary}[theorem]{Corollary}

\theoremstyle{remark}
\newtheorem{remark}[theorem]{Remark}

\theoremstyle{definition}
\newtheorem{definition}[theorem]{Definition}

\newtheorem{claim}[theorem]{Claim}

\theoremstyle{plain}
\newtheorem{maintheorem}{Theorem}

\numberwithin{equation}{section}

\renewcommand*{\backref}[1]{}
\renewcommand*{\backrefalt}[4]{%
	\ifcase #1 (Not cited.)%
	\or        (Cited on page~#2.)%
	\else      (Cited on pages~#2.)%
	\fi}

\address[W. Li]{College of Mathematics, 
                Sichuan University, 
                Chengdu, Sichuan 610065, P.R. China}
\email{lwc@scu.edu.cn}

\address[M. Xia]{School of Mathematical Sciences,
                Dalian University of Technology,
                Dalian, Liaoning 116024, P.R. China}
\email{xiamingyang@dlut.edu.cn}

\title[Transitivity of partially hyperbolic skew products]
{On the Topological Transitivity of\\ Interval Extensions of Anosov Diffeomorphisms}

\author[W. Li \& M. Xia]{Wenchao LI and Mingyang XIA}

\date{\today}

\subjclass[2020]
{Primary: 37D30;     % Part Hyper/Dom Splitting
 Secondary: 37D05,   % hyperbolic orbits and sets
            37E05.   % involving maps of the interval
}
\keywords{Partial hyperbolicity, transitivity, analytic skew-product.}

\begin{document}
\begin{CJK}{UTF8}{gbsn}

\begin{abstract}
	\begin{sloppypar}
    We establish topological transitivity for a class of analytic partially hyperbolic diffeomorphisms on thickened nilmanifolds. 
    These systems take the form of skew products, constructed as interval extensions of Anosov diffeomorphisms on nilmanifolds. 
    Specifically, we develop a mechanism for topological transitivity 
    allowing arbitrary finite-order parabolic degeneracies in the central direction.
    \end{sloppypar}
\end{abstract}

\maketitle

\section{Introduction}\label{section: introduction}

Let $N$ be a nilmanifold and $M = N\times [0, 1]$ be the thickened nilmanifold. 
The boundary $\partial M$ is a disjoint union of $M_0 := N\times\{0\}$ and $M_1 := N\times\{1\}$. 
Let $\mathrm{Diff}_\partial(M)$ be the collection of diffeomorphisms on $M$ preserving $M_0$ and $M_1$ respectively.
In this paper, we study the following partially hyperbolic skew-product class in $\mathrm{Diff}_\partial(M)$.

\begin{definition}\label{definition: PHS}
    A partially hyperbolic skew product system $F\in\mathrm{Diff}_\partial(M)$ 
    is a boundary-preserving diffeomorphism defined by $F(x, t) = (f(x), \phi_x(t))$ 
    satisfying the conditions:
    \begin{itemize}
        \item $f: N \to N$ is an Anosov diffeomorphism with a hyperbolic splitting $TN = E^s\oplus E^u$;
        \item For each $x\in N$, $\phi_x$ is a diffeomorphism of $[0, 1]$ with $\phi_x(0) = 0$ and $\phi_x(1) = 1$;
        \item $\|Df|_{E^s(x)}\| < \phi_x'(t) < m(Df|_{E^u(x)})$ for any $(x, t)\in M$.
    \end{itemize}
    The collection of such partially hyperbolic diffeomorphisms on $M$ is denoted by 
    $\mathrm{PHS}(M)$.
\end{definition}

In this case, $F$ is partially hyperbolic, and by abuse of notation, 
we denote the partially hyperbolic splitting by $TM = E^s\oplus E^c\oplus E^u$. 
Note that $E^\sigma(x, t) = E^\sigma(x)\times\{0\}$ for $\sigma = s, u$ and $t = 0, 1$. 
As $F$ is dynamically coherent, 
we denote by $\mathcal{F}^\sigma$ the dynamical foliations for $\sigma = s, c, u, cs, cu$. 
Note that the center-stable and center-unstable foliations are given by 
$\mathcal{F}^{cs}(x, t) = \mathcal{F}^s(x)\times[0, 1]$ and $\mathcal{F}^{cu}(x, t) = \mathcal{F}^u(x)\times [0, 1]$.

Let $\mathrm{Per}(f)$ be the set of periodic points of $f$. 
For $p\in \mathrm{Per}(f)$ with period $\pi(p)$, 
let $\Phi_p$ be the return map defined by
\[F^{\pi(p)}(p, t) = (p, \Phi_p(t)),\ \forall\, t\in[0, 1].\]
It is clear that 
$\Phi_p = \phi_{f^{\pi(p) - 1}(p)} \circ \cdots \circ \phi_p \in\mathrm{Diff}_\partial([0, 1])$.
For simplicity, denote $F_0 = F|_{M_0}$ and $F_1 = F|_{M_1}$.
Then we have $\mathrm{Per}(F_0) = \mathrm{Per}(f)\times\{0\}$ and $\mathrm{Per}(F_1) = \mathrm{Per}(f)\times\{1\}$. 

Recall the notion of pole maps $\Phi\in\mathrm{Diff}_\partial([0, 1])$ on the unit interval.	
Let $\Phi: [0, 1]\to[0, 1]$ be a diffeomorphism that fixes both endpoints:
\begin{itemize}
	\item $\Phi$ is a north–south pole map if $0 < \Phi'(0) < 1 < \Phi'(1)$ and $\Phi(t) < t$ for any $t\in(0, 1)$;
    \item $\Phi$ is a south–north pole map if $0 < \Phi'(1) < 1 < \Phi'(0)$ and $\Phi(t) > t$ for any $t\in(0, 1)$.
\end{itemize}

\begin{definition}\label{definition: TSS}
    We say that $(p, 0)\in\mathrm{Per}(F_0)$ is a \emph{central topological sink} (resp. \emph{source})
    if there exists $\delta > 0$ such that $\Phi_p(t) < t$ (resp. $\Phi_p(t) > t$) for every $t\in (0, \delta)$. 
    Equivalently, $\Phi_p\in\mathrm{Diff}_\partial([0,1])$ is a topological north-south (resp. south-north) pole map near zero.
\end{definition}

    In this case, restricted to a $C^k$ unit interval map $\Phi:[0, 1]\to[0, 1]$, 
    there may be neutral behavior near zero as the expression 
    \[\Phi(t) = t\pm \alpha t^{n+1} + o(t^{n+1}), \text{ for some } \alpha > 0 \text{ and } 1\leq n\leq k,\]
    which means the fixed point zero being \emph{$n$-flat}. 
    This constitutes the primary concern of the present work beyond hyperbolicity.

    We denote by $\mathrm{Per}^-(F_0)$ (resp. $\mathrm{Per}^+(F_0)$) 
    the collection of central topological sink-type (resp. source-type) periodic points,
    and define $\mathrm{Per}^0(F_0) := \mathrm{Per}(F_0)\setminus(\mathrm{Per}^+(F_0)\cup \mathrm{Per}^-(F_0))$. 
    Similarly, we can define $\mathrm{Per}^*(F_1)$ for $* = +, -, 0$. 
    For example, a central topological source-type periodic point $(p, 1)\in\mathrm{Per}^+(F_1)$ 
    means that $\Phi_p$ is a topological north-south pole map near $1$, 
    in the sense that there exists $\delta > 0$ such that $\Phi_p(t) < t$ for every $t\in (1 - \delta, 1)$.

Then, following the concept of \emph{boundary interconnection} given in \cite{LSX26}, 
we introduce a topological-version of this mechanism by releasing its hyperbolic assumptions as follows. This indicates that systems with contracting/expanding centers
may exhibit purely arbitrarily $n$-flat contracting/expanding behavior,
rather than exponential contraction/expansion.

\begin{definition}\label{definition: WBI}
    We say that $F\in \mathrm{PHS}(M)$ is \textit{weakly boundary interconnected}, 
    if there exist periodic points $p_0\in\mathrm{Per}^-(F_0)$, $q_0\in\mathrm{Per}^+(F_0)$, $p_1\in \mathrm{Per}^+(F_1)$, and $q_1\in\mathrm{Per}^-(F_1)$, such that
    \begin{align*}
        W^s(p_0)\cap W^u(p_1)&\neq\varnothing;\\
        W^s(q_1)\cap W^u(q_0)&\neq\varnothing,
    \end{align*}
    where $W^s(\cdot)$ and $W^u(\cdot)$ are the stable and unstable sets respectively.
\end{definition}

In this case, $W^s(p_0)$ is an open submanifold of $\mathcal{F}^{cs}(p_0)$, and $W^u(p_1)$ is an open submanifold of $\mathcal{F}^{cu}(p_1)$ (see Lemma \ref{lemma: W^s and W^u}). 
The intersection $W^s(p_0)\pitchfork W^u(p_1)$ consists of central intervals. 
Similarly, $W^s(q_1)\pitchfork W^u(q_0)$ also consists of central intervals.
However, $W^s(\cdot)$ and $W^u(\cdot)$ may no longer be stable and unstable manifolds inside $M$, because the weak version of boundary interconnection releases the hyperbolic assumptions on the given periodic points. 

We mainly obtain the following result in the present paper.

\begin{maintheorem}\label{theorem: main theorem A}
    Let $F\in\mathrm{PHS}(M)$ be an analytic partially hyperbolic skew-product diffeomorphism. 
    If $F$ is weakly boundary interconnected,
    then $F$ is topologically transitive,
    and every positive iterate of $F$ is topologically transitive.
\end{maintheorem}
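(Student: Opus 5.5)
We follow the strategy of \cite{LSX26}, replacing hyperbolic estimates at the boundary by topological ones and using analyticity to control the central fixed point sets. We show that for any nonempty open sets $U,V\subset M$ there is $n\geq 0$ with $F^n(U)\cap V\neq\varnothing$. It suffices to treat $U,V$ contained in the interior $N\times(0,1)$ and of product form $U=B\times I$, $V=B'\times J$, where $B,B'\subset N$ are small balls and $I,J\subset(0,1)$ are open intervals. The same argument with $F$ replaced by $F^k$ gives transitivity of every iterate. For this we use that $f^k$ is again a transitive Anosov diffeomorphism on the nilmanifold, and that the points $p_0,q_0,p_1,q_1$ remain periodic for $f^k$, with the same topological sink/source type and the same heteroclinic connections.

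\textbf{Step 1 (structure of $W^s$, $W^u$).} By Lemma~\ref{lemma: W^s and W^u}, $W^s(p_0)=\mathcal{F}^s(\mathcal{O}(p_0))\times[0,\delta_0)$ up to holonomy, meaning it is an open neighborhood of $\mathcal{F}^s(p_0)\times\{0\}$ inside $\mathcal{F}^{cs}(p_0)$, and similarly for the other three sets.

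The key topological fact is the following. Take $z=(y,t)\in W^s(p_0)\cap W^u(p_1)$. Then the central interval $\{y\}\times(0,1)$ is carried by forward iterates toward $M_0$ along the orbit of $p_0$, and by backward iterates toward $M_1$ along the orbit of $p_1$. Since $\phi_y$ is monotone and the central fibres are mapped to central fibres, the whole open segment $\{y\}\times(0,1)$ lies in $W^s(p_0)\cap W^u(p_1)$. To justify this, we note that $\Phi_{p_0}(t)<t$ holds on some $(0,\delta)$, and we use analyticity to show that $\Phi_{p_0}$ has only finitely many fixed points in $[0,1]$ unless $\Phi_{p_0}=\mathrm{id}$. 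The identity case is excluded by the sink condition. Hence the central fibre splits into finitely many basins, and we take the intersection point on the fibre accordingly.

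\textbf{Step 2 (moving $U$ to the bottom boundary).} Using the density of $\mathcal{F}^u$ leaves of $f$ and the $\lambda$-lemma inside $\mathcal{F}^{cu}$, we iterate $U$ forward. Its $u$-disk spreads, and by the transitivity and minimality of the unstable foliation on nilmanifolds, $F^{n}(U)$ eventually crosses $W^s(q_1)$ or $W^s(p_0)$ in a central subinterval. We aim to show that $F^n(U)$ accumulates on $M_0$ or on $M_1$. Whichever boundary it approaches, the two heteroclinic connections let us pass from one boundary to the other. Near $(q_0,0)$, which is a topological central source, the image is pushed up along $W^u(q_0)$ into $W^s(q_1)$, and so reaches $M_1$. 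Symmetrically, near $(p_1,1)$ it is pushed down to $M_0$.

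We will show that $F^n(U)$ contains a $cu$-disk that is close, in the $C^0$ sense, to $\mathcal{F}^u(x)\times[0,1]$ for a full central segment. This is the \emph{blender-like} property: a $cu$-strip of full central height. We obtain it by concatenating the central intervals produced in Step 1 with the boundary dynamics.

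\textbf{Step 3 (hitting $V$).} Apply the same reasoning to $V$ under $F^{-1}$, which swaps the roles of stable and unstable. This produces an $su$-transverse $cs$-strip of full central height inside $F^{-m}(V)$. A full-height $cu$-strip and a full-height $cs$-strip intersect, because the $u$- and $s$-leaves of $f$ have nonempty transverse intersection by the local product structure and density of leaves. Hence $F^{n+m}(U)\cap V\neq\varnothing$.

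\textbf{Main obstacle.} The hard step is Step 2, specifically obtaining full central height without exponential contraction at the boundary. With $n$-flat behavior, $\Phi_p(t)=t-\alpha t^{n+1}+\dots$, orbits approach $0$ only polynomially. The estimate from \cite{LSX26}, that central lengths grow while $u$-disks expand, therefore fails near the boundary.

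We plan to avoid quantitative rates altogether. We will use only the monotonicity of the $\phi_x$ and the openness of the basins. Analyticity, through finiteness of the fixed points of each $\Phi_p$, guarantees that the interval $(0,\delta)$ lies in a genuine basin. Compactness of the heteroclinic central intervals then supplies uniform neighborhoods, and continuity of $F$ on compact pieces does the rest.
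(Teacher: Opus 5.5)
There is a genuine gap, and it sits exactly at the step you flag as the main obstacle. Step~2 asserts that some $F^n(U)$ contains a $cu$-disc $C^0$-close to $\mathcal{F}^u(x)\times[0,1]$, but nothing in the proposal produces it. Monotonicity of the $\phi_x$, openness of the basins and compactness only show two things. First, $U^*=\bigcup_n F^n(U)$ accumulates on $M_0$ (via $W^s(p_0)$) and on $M_1$ (via $W^s(q_1)$), possibly over unrelated base points. Second, the same holds for $V^*=\bigcup_n F^{-n}(V)$. Two sets accumulating on the same boundary torus need not meet.

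What one actually obtains is more specific. $U^*$ contains a $u$-saturated box whose central trace, transported by stable holonomy to $\mathcal{F}^c(p_0)$, is an interval $J^c_{p_0}$. $V^*$ contains an $s$-saturated box with trace $J^c_{q_0}\subseteq\mathcal{F}^c(q_0)$. Under iteration these become $\Phi_{p_0}^{k}(J^c_{p_0})$ and $\Phi_{q_0}^{-l}(J^c_{q_0})$. Both sink into $M_0$, at rates $e^{\sigma_{p_0}(k)}$ and $e^{-\sigma_{q_0}(l)}$ respectively, which are only polynomial in the flat case. Each keeps a roughly fixed fraction of a fundamental domain of its return map (Lemma~\ref{lemma: asymptotical translation}), and that fraction may be tiny since $U,V$ are arbitrary. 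Making them overlap in a \emph{common} central fibre near a heteroclinic point $r_0\in\mathcal{F}^u(p_0)\pitchfork\mathcal{F}^s(q_0)$ is an alignment problem between two different fundamental-domain structures. Soft arguments cannot settle it.

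The paper settles it quantitatively, and both quantitative ingredients are missing from your plan.

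\textbf{Alignment.} $p_0,q_0$ are taken of minimal order, and the jets at $t=0$ are normalised by Liv\v{s}ic (Claim~\ref{claim: Livsic reparameterization}). The density of Birkhoff sums (Lemma~\ref{lemma: dense distribution}) is then used to replace $p_0$ so that $[\Phi_{p_0}/\Phi_{q_0}^{-1}]=0$ or $([\Phi_{p_0}/\Phi_{q_0}^{-1}],1)<\varepsilon$. Proposition~\ref{proposition: center intersection} then yields $k_n,l_n$ with overlap at least $C\min\{e^{\sigma_{p_0}(k_n)},e^{-\sigma_{q_0}(l_n)}\}$ after holonomy to $\mathcal{F}^c(r_0)$.

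\textbf{Error control.} This overlap must dominate the error of passing between nearby central fibres. That error consists of the central twist $e^{k_n\pi(p_0)\lambda_{p_0}}$, $e^{-l_n\lambda_{q_0}}$ of Lemma~\ref{lemma: central twist} and the holonomy errors (Claim~\ref{claim: higher order}). In the mixed case $\mathrm{ord}(p_0)>\mathrm{ord}(q_0)=1$, this forces replacing $q_0$ by a shadowing periodic point with very small $\lambda^c(q_0)$ (Claim~\ref{claim: weak hyperbolicity}).

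So rates cannot be avoided; they are the content of the proof. Correspondingly, analyticity is used to get finite order of the return maps (hence the rates $\sigma_f$ and the Liv\v{s}ic normal form), not finiteness of fixed points.

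Step~1 is also false as stated. By Lemma~\ref{lemma: W^s and W^u}, the trace of $W^s(p_0)$ on a central fibre is only the initial segment cut at the first positive fixed point $s_{p_0}$ of $\Phi_{p_0}$, which may be $<1$. The hypothesis only gives an overlap of this with a terminal segment coming from $W^u(p_1)$, not all of $\{y\}\times(0,1)$. Your reduction to the iterates $F^k$ is fine and agrees with the paper.
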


\begin{remark}
     The analytic regularity is only used in the central direction $\phi_x(\cdot)$.
     At the same time, the novelty of the mechanism of the weak boundary interconnection lies in involving arbitrary $n$-flat degeneration.
\end{remark}

\begin{remark}
     A diffeomorphism $F$ is called \emph{totally transitive} 
     if every positive iterate of $F$ is topologically transitive.
     Here weak boundary interconnection yields total transitivity.
     We would like to mention that
     Cheng, Gan and Shi \cite{CGS18} constructed examples of robustly transitive partially hyperbolic diffeomorphisms on \(\mathbb T^3\) 
     whose squares are not transitive and hence not totally transitive; 
     their examples exchange two disjoint nonempty open regions.
\end{remark}

As an application, recall the following Kan-type skew product $F_K\in\mathrm{PHS}(\mathbb{T}^d\times[0, 1])$
defined by
\[
F_K(x,t) := (Ax,\, \phi_x(t)): \mathbb{T}^d\times[0, 1] \to \mathbb{T}^d\times[0, 1],
\]
where $A\in\mathrm{Aut}(\mathbb{T}^d)$ is a hyperbolic toral automorphism with hyperbolic splitting $L^s\oplus L^u$, 
$\phi_x(\cdot) \in \mathrm{Diff}_\partial([0, 1])$ for each $x\in \mathbb{T}^d$, 
and $\|A|_{L^s}\| < \left| \phi_x'(t) \right| < m(A|_{L^u})$.
See \cite{Kan94}.

By Theorem \ref{theorem: main theorem A}, under the analytic regularity, 
we indeed establish topological transitivity for 
weakest-version constructions of Kan-type skew products (cf. \cite{BDV05,Xia23,LSX26}),
provided that the weak boundary interconnection property holds:
there exist two periodic points $p,q$ of $A\in\mathrm{Aut}(\mathbb{T}^d)$
such that $\Phi_p, \Phi_q\in\mathrm{Diff}_\partial([0, 1])$, and for each $t\in(0,1)$,
\[
\Phi_p(t)<t
\quad\text{and}\quad
\Phi_q(t)>t,
\]
that is, $\Phi_p$ and $\Phi_q$ are topological north-south and south-north pole maps, respectively.

{\color{black}
The criterion of weak boundary interconnection imposes the weakest central conditions 
    in the present formulation of Kan-type skew-product diffeomorphisms.
Here, we would like to point out that 
    \cite[Theorem 2.1]{Z26} claims topological mixing for Kan-type skew-product diffeomorphisms.
However, the proof of its fundamental Lemma 3.2 contains a gap:
    the asserted common-fiber conclusion (3.2) can not follow from Fubini’s Theorem;
    intermingling and covering of the two basins 
    do not by themselves imply that both basins meet a common central fiber inside the given open set. 
This leaves a gap in the proof of the claimed topological mixing.
    To the best of our knowledge, 
    the conclusion of Lemma 3.2 remains open in the setting considered therein, 
    and the topological mixing property is still unknown even under the stronger assumptions in \cite{LSX26}.}

% Compared with \cite{GS19}, it is an interesting question whether the topological mixing holds for interval extensions of Anosov diffeomorphisms.

% It may be interesting to explore the topologically mixing for Kan-type skew-product diffeomorphisms based on analyzing why the mechanism in \cite{GS19} works only for uniformly expanding base dynamics while fails for Anosov base dynamics.

% 传递性-弱化可扰动的双曲性
    Based on hyperbolicity, the powerful blender mechanism
        was introduced by Bonatti and D\'{\i}az \cite{BD96}
        to construct robustly transitive systems from perturbations.
    In a setting similar to the one considered here, 
        the robust cycle-type mechanism of boundary interconnection given in \cite[Definition 1.4]{LSX26},
        characterizing robust transitivity,
        may be viewed as  
{\color{black}a boundary analogue of the index-changing connection underlying classical heterodimensional cycles} 
        \cite{AS70,D95,BD96,DPU99,DR01,DR02}.
    In particular, D\'{\i}az and Rocha \cite{DR02} described the limit dynamics of coindex-one heterodimensional cycles
        through an iterated function system;
        more recently, Li and Turaev showed that arithmetic properties of specific moduli control the emergence of blenders near coindex-one heterodimensional cycles.
    
    The weak boundary interconnection enters 
        when the hyperbolic central description degenerates, 
        allowing the relevant central dynamics to be neutral      
        (see the arithmetic parameter right before Proposition \ref{proposition: center intersection}).
    In this paper, we mainly focus on establishing topological transitivity
        by the cycle‑type mechanism of weak boundary interconnection,
        an analogue of the hyperbolic mechanism 
        under arbitrary finite-order parabolic degeneration in the central direction, 
        which is beyond hyperbolicity and therefore need not be robust.
   
% 增厚斜积与Kan系统  
    On the other hand, boundary‑preserving skew‑product systems are of independent interest
        (see \cite{Mi85,IKS08,IS17,Ok17} for instance).
    	In particular, Gorodetski and Ilyashenko \cite{GI99} 
    	enhanced the blender method for skew-product systems
    	to show the coexistence of dense sets of periodic points of distinct indices; 
        Ilyashenko \cite{Il11} studied quasi-open sets 
    	in the space of boundary-preserving skew‑product diffeomorphisms of $\mathbb{T}^2\times[0,1]$,
        which are formed by the maps exhibiting the different types of thick Milnor attractors.
    Moreover, Kan's constructions also continue to be a source of interesting investigations 
        (see \cite{MW05,DVY16,BP18,UV18,NBRV21,RT22} for instance).
        In particular, a family of topologically transitive diffeomorphisms was constructed,
    	by inserting a blender in Kan's example on $\mathbb{T}^2\times [0,1]$ 
        and embedding into the boundaryless manifold (see \cite{CGS18}).

% \vspace{8pt}\noindent \textbf{Organization of this paper:} 
% In Section \ref{section: preliminaries}, we introduce some technical results that we need, including asymptotically rational independence of Birkhoff sums and analysis in the one-dimensional center dynamics. In Section \ref{section: WBI to TT}, we prove Theorem \ref{theorem: main theorem A}.

%%%%%%%%%%%%%%%%%%%%%%%%%%%%%%%%%%%%%%%%%%%%%%%%%%%%%%%%%%%%%%%%%%%%%%%%%%%%%%%%%%%%%%
%%%%%%%%%%%%%%%%%%%%%%%%%%%%%%%%%%%%%%%%%%%%%%%%%%%%%%%%%%%%%%%%%%%%%%%%%%%%%%%%%%%%%%

\section{Preliminaries}\label{section: preliminaries}
\subsection{Partial hyperbolicity}\label{subsection: PHS}

For a $C^2$ partially hyperbolic skew product $F\in\mathrm{PHS}(M)$, the dynamical foliations $\mathcal{F}^\sigma$ are H\"older continuous with $C^{1+}$ leaves, for $\sigma = s, c, u, cs, cu$. 

There is a well-known result on the regularity of holonomy maps in \cite{PSW97}, 
which we adopt in our setting as follows. 

\begin{lemma}[Theorem B,\cite{PSW97}]\label{lemma: C1-holonomy}
    For a $C^2$ partially hyperbolic skew product $F$, 
    the holonomy map along the stable foliation (resp. the unstable foliation) restricted to a center-stable leaf (resp. a center-unstable leaf) is a $C^1$ diffeomorphism. 
    Moreover, the derivative of the holonomy map is uniformly continuous with respect to the base points.
\end{lemma}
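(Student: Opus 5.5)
This lemma is quoted from \cite{PSW97}, so the plan is to verify that our skew products satisfy the hypotheses of Theorem B there and then apply it. Theorem B of \cite{PSW97} says the following. Let $F$ be a $C^2$ partially hyperbolic diffeomorphism with one-dimensional center, or more generally one that is center bunched. Then the stable holonomy between center leaves inside a center-stable leaf is $C^1$, and its derivative depends continuously on the pair of points. The same holds for the unstable holonomy inside a center-unstable leaf.

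\textbf{Step 1: the hypotheses.} The hypotheses should follow directly from Definition~\ref{definition: PHS}.
\begin{itemize}
\item The splitting is $TM=E^s\oplus E^c\oplus E^u$ with $E^c=\partial_t$ one-dimensional. One-dimensional center is automatically center bunched: $\|DF|_{E^c}\|=m(DF|_{E^c})=\phi_x'(t)$, so the bunching inequalities $\|DF|_{E^s}\|\,\|DF|_{E^c}\|\,m(DF|_{E^c})^{-1}<1$ and $\|DF|_{E^u}^{-1}\|\,\|DF|_{E^c}\|\,m(DF|_{E^c})^{-1}<1$ reduce to the uniform hyperbolicity of $f$.
\item Dynamical coherence is explicit. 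The center-stable leaves are $\mathcal F^s(x)\times[0,1]$ and the center-unstable leaves are $\mathcal F^u(x)\times[0,1]$.
\end{itemize}

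\textbf{Step 2: the boundary.} The one point needing care is that $M$ has boundary $M_0\sqcup M_1$, which is invariant, while \cite{PSW97} works on closed manifolds. I would handle this in one of two ways.
\begin{itemize}
\item \emph{Extension.} Extend each $\phi_x$ to a diffeomorphism of a slightly larger interval, or of a circle, keeping $C^2$ dependence on $x$ and keeping the domination inequality by continuity. Then apply the theorem on the closed manifold $N\times S^1$ and restrict back.
\item \emph{Local argument.} Alternatively, note that the proof in \cite{PSW97} is local along leaves and uses only the $C^2$ bounds and the rate inequalities. It therefore applies verbatim to compact invariant manifolds with boundary that are tangent to $E^c$.
\end{itemize}

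\textbf{Step 3: the holonomy in this setting.} Here the stable holonomy inside $\mathcal F^s(x)\times[0,1]$ maps $\{x\}\times[0,1]$ to $\{y\}\times[0,1]$ for $y\in\mathcal F^s(x)$. Concretely it is
\[
h_{x,y}=\lim_{n\to\infty}\bigl(\phi^n_y\bigr)^{-1}\circ\phi^n_x, \qquad \phi^n_x:=\phi_{f^{n-1}x}\circ\cdots\circ\phi_x .
\]
This gives a direct route as well. One shows the derivatives of the compositions converge uniformly. The key estimate bounds $\log$ of the derivative of the $n$-th term by a sum of $|\log\phi'_{f^kx}(t_k)-\log\phi'_{f^ky}(s_k)|$. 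The $C^2$ bound makes each summand comparable to $d(f^kx,f^ky)+|t_k-s_k|$. The domination condition ensures that $|t_k-s_k|$ shrinks exponentially relative to the stable contraction, so the series converges geometrically and uniformly in $x,y$. This yields both the $C^1$ regularity and the uniform continuity of the derivative in the base points.

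The main, and really only, obstacle is this summability estimate. It is exactly the content of \cite{PSW97}, so I would cite it rather than redo it.
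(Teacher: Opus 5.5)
Your proposal is correct and takes essentially the paper's approach: the paper simply invokes Theorem B of \cite{PSW97}, and your checks that a one-dimensional center is automatically center bunched and that the $cs$/$cu$ leaves are the products $\mathcal{F}^s(x)\times[0,1]$ and $\mathcal{F}^u(x)\times[0,1]$ supply the hypotheses the paper leaves implicit. Two small points in Step 2 do not affect the argument: the invariant boundary $M_0\sqcup M_1$ is tangent to $E^s\oplus E^u$ and transverse to $E^c$ (not tangent to $E^c$), and a circle extension needs an explicit $C^2$ construction on the added arc rather than ``continuity'' alone.
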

    
The following useful result on the estimate of central twist is essentially proved in \cite{LSX26}.
    
\begin{lemma}[Lemma 2.2,\cite{LSX26}]\label{lemma: central twist}
    Let $p\in \mathrm{Per}^-(F_0)$ be a central topological sink. 
    Fix $r\in \mathcal{F}^u(p)$ and 
    $\mathrm{Hol}_p^r: \mathcal{F}^{cs}_{loc}(p) \to \mathcal{F}^{cs}_{loc}(r)$ 
    the holonomy map along the unstable foliation. 
    For $x_0\in\mathcal{F}^{cs}_{loc}(p)$, denote $x_n = F^n(x_0)$ for any $n\in\mathbb{N}$, 
    and define
    $$
    y_n  = \mathcal{F}^s_{loc}(x_n)\pitchfork\mathcal{F}^c_{loc}(p),\ 
    z_n  = \mathrm{Hol}_p^r(x_n),\ 
    w_n  = \mathrm{Hol}_p^r(y_n),\ 
    w'_n = \mathcal{F}^s_{loc}(z_n)\pitchfork\mathcal{F}^c_{loc}(r).
    $$
    Then we have 
    $$d_c(w'_n,\ w_n) \leq Ce^{n\lambda_p},\ \forall\, n\in\mathbb{N}.$$
    Here $C\geq 1$ is a constant depending on $p$ and $r$, and
    $$\lambda^s(p) < \lambda_p := \frac{\lambda^u(p) - \lambda^c(p)}{\lambda^u(p) - \lambda^s(p)}\lambda^s(p) < \lambda^c(p) \leq 0,$$
    where $\lambda^c(p)$ is the central Lyapunov exponent of the fixed point $p$, and
    $$\ln\left\|DF|_{E^s(p)}\right\| =: \lambda^s(p) < \lambda^c(p) <  \lambda^u(p) := \ln m\left(DF|_{E^u(p)}\right).$$
\end{lemma}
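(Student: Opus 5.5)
The plan is to reduce the estimate to a Hölder-type bound on the ``central twist'' of the local product structure and to derive that bound dynamically. Replacing $F$ by $F^{\pi(p)}$, we may assume $p$ is fixed. By the skew-product structure, $\mathcal{F}^{cs}_{loc}(p)=\mathcal{F}^s_{loc}(p)\times[0,\delta)$ and $\mathcal{F}^{cs}_{loc}(r)=\mathcal{F}^s_{loc}(r)\times[0,\delta)$. The holonomy $\mathrm{Hol}_p^r$ is a $C^1$ diffeomorphism between them, with uniformly continuous derivative, by Lemma~\ref{lemma: C1-holonomy}.

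The quantity $d_c(w'_n,w_n)$ measures how far the curve $\mathrm{Hol}_p^r(\mathcal{F}^s_{loc}(x_n))$ deviates from the genuine stable leaf $\mathcal{F}^s_{loc}(z_n)$ in the central direction. This deviation vanishes in two degenerate cases:
\begin{itemize}
\item if $x_n=y_n$;
\item if $r=p$.
\end{itemize}
Since $x_0\in\mathcal{F}^{cs}_{loc}(p)$ and the central coordinate is non-increasing near $0$ (by the sink property), we have $d_s(x_n,y_n)\le Ce^{n\lambda^s}$. The unstable distance from $p$ to $r$ is fixed.

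\textbf{Step 1: a bilinear twist estimate.} I first establish a local estimate at unit scale. Consider a small $su$-quadrilateral based on the center leaf of $p$, with stable side $a$ and unstable side $b$. The central gap between ``stable-then-unstable'' and ``unstable-then-stable'' closing is at most $C\min(a,b)$, and in fact it is $O(ab)$. This follows from the $C^1$ regularity of the holonomies inside $cs$- and $cu$-leaves together with the vanishing in the two degenerate cases.

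\textbf{Step 2: renormalize by backward iteration.} Iterate the whole configuration $(x_n,y_n,z_n,w_n,w'_n)$ backward by $m$ steps. The unstable side becomes about $e^{-m\lambda^u(p)}$, and the stable side becomes about $e^{(n-m)\lambda^s(p)}$. Both holonomies commute with $F$, so the configuration at time $-m$ is again of the same type. The twist there is bounded by Step 1, namely by $C\min\bigl(e^{-m\lambda^u},e^{(n-m)\lambda^s}\bigr)$. Pushing forward again by $m$ steps multiplies central distances by roughly $e^{m\lambda^c(p)}$. This uses that the relevant points stay close to the orbit of $p$ near $M_0$, so central derivatives are close to $\Phi_p'(0)=e^{\lambda^c}$.

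\textbf{Step 3: choose $m$.} Balance the two terms by setting $-m\lambda^u=(n-m)\lambda^s$, that is, $m=n\lambda^s/(\lambda^s-\lambda^u)$. The resulting bound is
\[
C\,e^{m(\lambda^c-\lambda^u)}=C\exp\Bigl(n\,\tfrac{\lambda^u-\lambda^c}{\lambda^u-\lambda^s}\,\lambda^s\Bigr)=Ce^{n\lambda_p}.
\]
The inequalities $\lambda^s<\lambda_p<\lambda^c$ are then elementary, given $\lambda^s<\lambda^c<\lambda^u$.

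\textbf{Main obstacle.} The hard step is getting the exponents exactly $\lambda^{s,c,u}(p)$ rather than with an $\varepsilon$-loss. The backward orbit of the configuration must remain in a neighborhood where the derivatives are controlled, and the sink at $0$ may be only topological, i.e.\ $\lambda^c(p)=0$ with flat behavior. My first attempt is a bounded-distortion argument:
\begin{itemize}
\item Along the stable and unstable directions, the distances $e^{-k\lambda^u}$ and $e^{(n-k)\lambda^s}$ are summable along the orbit, so the distortion is bounded by a constant.
\item In the center, the central coordinate of the iterates is non-increasing towards $0$. Hence $\prod\phi'$ along the orbit is bounded by $C e^{m\lambda^c}$, using the one-sided monotonicity of $\Phi_p$ near $0$.
\end{itemize}
If this fails, I would settle for $\lambda_p+\varepsilon$. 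Since $\lambda_p<\lambda^c$ strictly, this is what is actually needed downstream, and it should still suffice there.
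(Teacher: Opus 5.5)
The paper gives no proof of this lemma; it quotes it from \cite{LSX26}. The exponent $\lambda_p$ is exactly the balance exponent of a backward-renormalization argument. So your architecture is the right one: pull the configuration back $m$ steps, bound the twist at small scale, push forward with central factor $e^{m\lambda^c(p)}$, and optimize in $m$. The gap is Step~1.

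Lemma~\ref{lemma: C1-holonomy} only gives $C^1$ regularity of stable (resp.\ unstable) holonomies \emph{between center leaves inside a single} $cs$-leaf (resp.\ $cu$-leaf). It does not make $\mathrm{Hol}_p^r:\mathcal{F}^{cs}_{loc}(p)\to\mathcal{F}^{cs}_{loc}(r)$ a $C^1$ map. Its base component is the unstable holonomy of $f$ between stable plaques, which for an Anosov diffeomorphism of a general nilmanifold is in general only H\"older. With an unstable side of size one, that transverse H\"older regularity of $\mathcal{F}^u$ only yields a H\"older bound in the stable side $a$. So neither $C\min\{a,b\}$ nor $O(ab)$ follows from what you invoke.

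Your own Steps~2--3 also show this cannot be the intended input. If $d_c\le C\min\{a,b\}$ held on plaques of fixed size, taking $m=0$ would already give $Ce^{n\lambda^s(p)}$, strictly better than the lemma. Indeed, $m\mapsto e^{m\lambda^c}\min\{e^{-m\lambda^u},e^{(n-m)\lambda^s}\}$ is \emph{maximized}, not minimized, at your balance point.

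The repair is to use the trivial bound instead. Fix $0\le m\le n$ and set $a'=e^{(n-m)\lambda^s(p)}$, $b'=e^{-m\lambda^u(p)}$. The points $x_{n-m}$, $y_{n-m}$, $F^{-m}(z_n)$, $F^{-m}(w_n)$, $F^{-m}(w'_n)$ are joined by two stable segments of length at most $Ca'$ and two unstable segments of length at most $Cb'$. These are base rates, by the skew-product structure, so there is no distortion loss. Hence $d_c(F^{-m}w_n,F^{-m}w'_n)\le C\max\{a',b'\}$ by the triangle inequality, with no holonomy regularity used. Then
\[
d_c(w_n,w'_n)\le C\,e^{m\lambda^c(p)}\max\bigl\{e^{(n-m)\lambda^s(p)},\,e^{-m\lambda^u(p)}\bigr\}.
\]
This \emph{is} minimized at $m^*=n\lambda^s/(\lambda^s-\lambda^u)\in(0,n)$, and gives $Ce^{n\lambda_p}$ after rounding $m^*$. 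Since $a'\asymp b'$ there, your final formula survives.

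The push-forward factor also needs an actual argument; monotonicity of heights does not control the derivative product. Compare $\phi'$ along $\mathcal{F}^c(F^{-k}r)$ with $\Phi_p'$ up to factors $1+O(e^{-k\lambda^u})$, which are summable. Then:
\begin{itemize}
\item if $\lambda^c<0$, use $\Phi_p'(t)\le e^{\lambda^c}(1+Ct)$ together with geometrically decaying heights;
\item if $\lambda^c=0$, use $\Phi_p'\le 1$ near $0$, which holds for an analytic, finitely flat sink. Here heights decay only polynomially, so a naive distortion sum diverges.
\end{itemize}
With these changes you get the exact exponent and do not need the $\varepsilon$-loss fallback.
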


\begin{figure}[htbp]
    \centering
    \begin{tikzpicture}[mid arrow/.style = {decoration = {markings, mark = at position 0.5 with {\arrow{>}}}, postaction = {decorate}}]
        \draw [thick, mid arrow, color = red] (-0.5,0.5)--(0,0);
        \draw [thick, mid arrow, color = red] (0.5,-0.5)--(0,0);
        \draw [thick, mid arrow, color = green] (0,0)--(5,3);
        \draw [thick] (0,0)--(0,2);
        \draw [thick] (5,3)--(5,5);
        \node [circle, fill, inner sep = 1pt, label = below: $p$] at (0,0) {};
        \node [circle, fill, inner sep = 1pt, label = below: $r$] at (5,3) {};
    
        \draw [thick, mid arrow, color = red] (-0.5,1.5)--(0,1);
        \draw [thick, mid arrow, color = green] (-0.5,1.5)--(4.5,4.9);
        \draw [thick, mid arrow, color = green] (0,1)--(5,4);
        \draw [thick, mid arrow, color = red] (4.5,4.9)--(5,4.4);
        \draw [dashed, thick, mid arrow, color = green] (0,1.4)--(5,4.4);
        
        \node [circle, fill, inner sep = 1pt, label = left: $x_n$] at (-0.5,1.5) {};
        \node [circle, fill, inner sep = 1pt, label = left: $y_n$] at (0,1) {};
        \node [circle, fill, inner sep = 1pt, label = above: $z_n$] at (4.5,4.9) {};
        \node [circle, fill, inner sep = 1pt, label = right: $w_n$] at (5,4) {};
        \node [circle, fill, inner sep = 1pt, label = right: $w'_n$] at (5,4.4) {};
        \node [circle, fill, inner sep = 1pt, label = right: $y'_n$] at (0,1.4) {};
    \end{tikzpicture}
    \caption{Central twist}
    \label{figure: central twist}
\end{figure}

\begin{remark}\label{remark: uniform central twist}
    Similarly, for $q\in\mathrm{Per}^+(F_0)$, consider $F^{-1}$. We still have an estimate for the central twist, with the exponent
    \[\lambda^u(q) > \lambda_q := \frac{-\lambda^s(q) + \lambda^c(q)}{-\lambda^s(q) + \lambda^u(q)}\lambda^u(q) > \lambda^c(q) \geq 0.\]
    Denote
    \begin{align*}
        \lambda^s &:= \max\{\ln\|DF|_{E^s(x)}\|: x\in N\} < 0;\\
        \lambda^u &:= \min\{\ln m(DF|_{E^u(x)}): x\in N\} > 0.
    \end{align*}
    Then we have
    \begin{align*}
        |\lambda_p|,|\lambda_q| \geq \frac{-\lambda^u\lambda^s}{\lambda^u - \lambda^s},\ \forall\, p\in\mathrm{Per}^-(F_0),\ q\in\mathrm{Per}^+(F_0),
    \end{align*}
    indicating that the central twists have a uniformly exponential decay.
\end{remark}

\subsection{Asymptotically rational independence of Birkhoff sums}

    Here we recall some results related to Birkhoff sums. 
    This is the inductive foundation for reducing hyperbolicity from the mechanism of weak boundary interconnection.

    Recall a concept of asymptotically rational independence introduced in \cite{GS19}. 
    Given real numbers $a,b\in\mathbb{R}$, 
    define
    $$
    (a,\ b) := \inf\left\{\, |ka + lb|: k,l\in\mathbb{Z},\ ka + lb\neq 0 \,\right\}.
    $$ 
    Notice that $a,b$ are \textit{rationally independent} if and only if $(a,\ b) = 0$. 

    \begin{definition}\label{definition: ARI}
        Let $b\in\mathbb{R}$. A sequence $\{a_m\}\subseteq\mathbb{R}$ is \textit{asymptotically rationally independent} of $b$, 
        if $(a_m,\ b) \to 0$ as $m\to +\infty$.
    \end{definition}

Then recall the notions of Birkhoff sums and Birkhoff averages. 
Let $\varphi: N \to \mathbb{R}$ be an observation on a nilmanifold $N$ with $A: N \to N$ an Anosov diffeomorphism. Denote by
\begin{align*}
    &S_\varphi A(p) := \sum_{i = 0}^{\pi(p) - 1}{\varphi(A^i(p))};\\
    &\Bar{S}_\varphi A(p) := \frac{1}{\pi(p)}S_\varphi A(p),
\end{align*}
the \textit{Birkhoff sum} and the \textit{Birkhoff average} of the observable function $\varphi$ along a periodic orbit of $p\in\mathrm{Per}(A)$ with period $\pi(p)$.
    
We mention that the Birkhoff sum, driven by a return map of the skew-product system, can be regarded as an observable function involving the information of central Lyapunov exponents.
Precisely, consider $F\in\mathrm{PHS}(M)$, and define
\[\varphi(x) := \ln\|DF|_{E^c(x)}\|.\]
Then for a given $p\in \mathrm{Per}(F_0)$ with period $\pi(p)$, the Birkhoff sum $S_\varphi F_0(p)=\pi(p)\Bar{S}_\varphi F_0(p)$ is also the total central Lyapunov exponent $\pi(p)\lambda^c(p)$. 

There is the following classical \textbf{Liv\v{s}ic Theorem}, see \cite{Liv72}.
    
\begin{lemma}[Liv\v{s}ic Theorem]\label{lemma: Livsic}
    Let $A: N \to N$ be an Anosov diffeomorphism on a nilmanifold 
    and $\varphi: N \to \mathbb{R}$ be a H\"older continuous function. 
    If
    \[S_\varphi A(p)= 0,\ \forall\, p\in \mathrm{Per}(A),\]
    then there exists a H\"older continuous function $\psi: N \to \mathbb{R}$ 
    such that $\varphi= \psi - \psi\circ A$.
\end{lemma}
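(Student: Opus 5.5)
The plan is the classical proof of Liv\v{s}ic: construct $\psi$ along a dense orbit and use the Anosov closing lemma to show that it is uniformly H\"older there. We need $\psi(Ax)=\psi(x)-\varphi(x)$. The first step is to recall that an Anosov diffeomorphism $A$ on a nilmanifold $N$ is topologically transitive. This holds because, by Franks--Manning, $A$ is topologically conjugate to a hyperbolic nilmanifold automorphism, and such automorphisms are transitive. Fix $x_0\in N$ with dense forward orbit, and define on $\mathcal O^+(x_0)$
\[
\psi(A^n x_0) := -\sum_{i=0}^{n-1}\varphi(A^i x_0),\qquad n\ge 0,
\]
with $\psi(x_0)=0$. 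By construction $\psi(A^{n+1}x_0)=\psi(A^nx_0)-\varphi(A^nx_0)$. So $\varphi=\psi-\psi\circ A$ holds on the orbit, provided $\psi$ is well defined there. This is automatic unless $x_0$ is periodic, and that case is excluded since $N$ is not a single orbit.

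The core step is a uniform H\"older estimate on the orbit. It rests on the Anosov closing lemma with exponential shadowing. There are constants $\varepsilon_0, C>0$ and $\mu\in(0,1)$ with the following property: if $d(A^k y,y)<\varepsilon\le\varepsilon_0$, then there is $p\in\mathrm{Per}(A)$ with $A^kp=p$ and
\[
d(A^i y, A^i p)\le C\varepsilon\,\mu^{\min\{i,\,k-i\}},\qquad 0\le i\le k .
\]
This follows from the local product structure. Take the periodic point from the shadowing lemma, and look at the bracket point between $y$ and $p$. Its stable part contracts forward and its unstable part contracts backward, which gives the two-sided exponential bound.

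Now let $m<n$ with $d(A^mx_0,A^nx_0)<\varepsilon$. Set $y=A^mx_0$ and $k=n-m$. Using the hypothesis $S_\varphi A(p)=0$, the sum of $\varphi$ over the periodic orbit of $p$ vanishes, and the H\"older continuity of $\varphi$ (exponent $\alpha$, constant $|\varphi|_\alpha$) then gives
\[
|\psi(A^nx_0)-\psi(A^mx_0)| = \Big|\sum_{i=0}^{k-1}\big(\varphi(A^iy)-\varphi(A^ip)\big)\Big|
\le |\varphi|_\alpha C^\alpha\varepsilon^\alpha\sum_{i=0}^{k-1}\mu^{\alpha\min\{i,k-i\}}\le C'\varepsilon^\alpha .
\]
The constant $C'$ is independent of $k$, because the geometric series is summable. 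Hence $\psi$ is uniformly $\alpha$-H\"older on $\mathcal O^+(x_0)$ at small scales, which suffices since $N$ is compact.

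Finally, because the orbit is dense and $\psi$ is uniformly continuous on it, $\psi$ extends uniquely to an $\alpha$-H\"older function on $N$. The identity $\varphi=\psi-\psi\circ A$ holds on the dense set $\mathcal O^+(x_0)$, and both sides are continuous, so it holds on all of $N$. The main obstacle is the closing lemma with the two-sided exponential estimate, since it is what makes the error sum bounded independently of the period $k$. I would derive it from the shadowing lemma together with uniform local product structure of $E^s\oplus E^u$. Transitivity is the other nontrivial input, and I would cite it.
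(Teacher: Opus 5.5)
Your argument is correct. The paper does not prove this lemma; it quotes it from Liv\v{s}ic \cite{Liv72}. What you give is the classical proof behind that citation: define $\psi$ along a dense forward orbit, then control its oscillation with the exponential Anosov closing lemma and the vanishing of periodic Birkhoff sums. One detail to state explicitly: the closing period $k$ may be a multiple $j\,\pi(p)$ of the minimal period, so the sum you use is $\sum_{i=0}^{k-1}\varphi(A^ip)=j\,S_\varphi A(p)=0$.
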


\begin{remark}\label{remark: Livsic}
    If $\varphi: N \to \mathbb{R}$ is a $C^k$ function, then there is also a $C^k$ solution; see \cite{dMM86}.
\end{remark}
    
Moreover, there is a rigidity result on the distribution of Birkhoff sums.

\begin{lemma}[Theorem 1.1,\cite{GSX22}]\label{lemma: dense distribution}
    Let $A: N \to N$ be an Anosov diffeomorphism on a nilmanifold and $\varphi: N \to \mathbb{R}$ be a H\"older continuous function. 
    If there exist $p_0,q_0\in \mathrm{Per}(A)$ satisfying 	
    \[S_\varphi A(p_0) < 0 < S_\varphi A(q_0),\]
    then the set $\left\{S_\varphi A(p): p\in \mathrm{Per}(A)\right\}$ is dense in $\mathbb{R}$.
\end{lemma}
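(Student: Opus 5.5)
The plan is to build periodic orbits that wind many times around the two given orbits, and to read off their Birkhoff sums as an integer combination of the two given sums plus a transition error that converges to a constant. Write $a := S_\varphi A(p_0) < 0 < b := S_\varphi A(q_0)$. Passing to a power of $A$ if needed (and keeping track of periods), assume $p_0, q_0$ are fixed points. On a nilmanifold the Anosov diffeomorphism is transitive, so $W^u(p_0)\pitchfork W^s(q_0)$ and $W^u(q_0)\pitchfork W^s(p_0)$ are nonempty. Fix heteroclinic points $x$ and $y$ in these two intersections.

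The first step is the shadowing estimate. By the shadowing lemma and the Anosov closing lemma, for all $k,l$ large there is a periodic point $r_{k,l}$ whose orbit consists of about $k$ iterates near $p_0$, then the transition along the orbit of $x$, then about $l$ iterates near $q_0$, then the transition along the orbit of $y$. Since the shadowing error decays exponentially away from the transition segments and $\varphi$ is H\"older, I expect
\[
S_\varphi A(r_{k,l}) = k a + l b + C + O\bigl(\theta^{\min\{k,l\}}\bigr)
\]
for some $\theta\in(0,1)$. Here $C = C(x,y)$ is the absolutely convergent sum of $\varphi - \varphi(p_0)$, respectively $\varphi - \varphi(q_0)$, along the two heteroclinic orbits, suitably normalized. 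Consequently the closure $\overline{\{S_\varphi A(p): p\in\mathrm{Per}(A)\}}$ contains $C + \{ka+lb : k,l\ge K\}$ for every $K$. Because $a<0<b$, the set $\{ka+lb: k,l\geq K\}$ is exactly the positive-index part of the subgroup $a\mathbb{Z}+b\mathbb{Z}$, and it is still dense in $\mathbb{R}$ whenever that subgroup is dense.

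In the case $a/b\notin\mathbb{Q}$, the previous paragraph already gives density, so I would handle this case first.

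The main obstacle is the resonant case $a/b\in\mathbb{Q}$. The same construction then only yields $C + d\mathbb{Z}$ with $d = (a,b) > 0$.
\begin{itemize}
\item I would first look for a third periodic orbit $r$ with $S_\varphi A(r)\notin d\mathbb{Q}$. Then I would rerun the estimate above with three loops (around $p_0$, $q_0$ and $r$, with the corresponding heteroclinic transitions). The sums $ka+lb+mS_\varphi A(r)$ with $k,l,m$ large then form a dense set, and density follows as before.
\item If no such orbit exists, all periodic sums lie in a rescaled lattice $d'\mathbb{Z}+c'$. I would then argue by the circle-valued Liv\v{s}ic theorem (Lemma~\ref{lemma: Livsic} applied to $e^{2\pi i\varphi/d'}$). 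This would show $\varphi = \psi-\psi\circ A + \text{const} + (\text{a term from } H^1(N;\mathbb{Z}))$. Connectedness of $N$, together with the hyperbolic action of $A$ on cohomology (no nonzero $A^*$-invariant integer classes yield bounded twisting), should force $\varphi$ to be cohomologous to a constant $c$. That would give $S_\varphi A(p) = c\,\pi(p)$, which contradicts $a<0<b$.
\end{itemize}
The delicate points are making the circle-valued lift argument rigorous on a nilmanifold, and checking that the transition constants $C(x,y)$ do not conspire to keep all sums in one coset.
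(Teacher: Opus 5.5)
The paper gives no proof of this lemma; it imports it as Theorem~1.1 of \cite{GSX22}, so your proposal has to stand on its own. Much of it is sound. The shadowing estimate is the same computation the paper carries out in the proof of Claim~\ref{claim: weak hyperbolicity}, with your $C$ playing the role of the series $P$ there. The case $a/b\notin\mathbb{Q}$ is correct, because $a<0<b$ makes $\{ka+lb: k,l\geq K\}$ dense. In the degenerate case the nilmanifold hypothesis does enter through the action of $A^*$ on $H^1(N;\mathbb{Z})$, as you say.

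\textbf{The gap.} Your two bullets in the resonant case do not exhaust the possibilities. From ``no periodic orbit has $S_\varphi A(r)\notin d\mathbb{Q}$'' you cannot conclude that all periodic sums lie in a lattice: they may all lie in $d\mathbb{Q}$ with unbounded denominators. In that situation the group they generate is dense, no single orbit has an irrational ratio to $d$, and the Liv\v{s}ic branch has nothing to work with. So this case is not covered.

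\textbf{The repair: split by the group generated.} Let $G\subseteq\mathbb{R}$ be the subgroup generated by all periodic sums.
\begin{itemize}
\item If $G$ is not cyclic, it is dense. So for each $\epsilon>0$ there are finitely many orbits $r_1,\dots,r_j$ which, together with $a,b$, generate an $\epsilon$-dense subgroup. Shadow pseudo-orbits that wind $k_0,\dots,k_{j+1}\geq K$ times around $p_0,q_0,r_1,\dots,r_j$. This puts a translate of $\{k_0a+k_1b+\sum_{i\geq1}k_{i+1}S_\varphi A(r_i)\}$ into the closure of the set of sums. Since $a<0<b$, the closure of this semigroup is the closed subgroup it generates. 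Hence the set of sums is $\epsilon$-dense for every $\epsilon$.
\item Only if $G=d'\mathbb{Z}$ do you need Liv\v{s}ic. Then no offset $c'$ arises. The transition constants only translate, so they cannot ``conspire''.
\end{itemize}

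\textbf{The cohomological step in the cyclic case.} This is short rather than delicate.
\begin{itemize}
\item The proof of Lemma~\ref{lemma: Livsic} (transfer function along a dense orbit, plus the closing lemma) works verbatim for $\mathbb{R}/d'\mathbb{Z}$-valued cocycles. It gives a H\"older $u:N\to\mathbb{R}/d'\mathbb{Z}$ with $u\circ A-u\equiv\varphi \pmod{d'}$.
\item Since $\varphi$ is real-valued, $u\circ A-u$ is null-homotopic, i.e.\ $(A^*-\mathrm{id})[u]=0$ in $H^1(N;\mathbb{Z})$.
\item By Manning, $A$ is homotopic to a hyperbolic nilmanifold automorphism. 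That automorphism acts on $H^1(N;\mathbb{R})$, the dual of the abelianized Lie algebra, without eigenvalue $1$. Hence $[u]=0$.
\item So $u$ lifts to a real H\"older $\psi$. Then $\varphi-(\psi\circ A-\psi)$ is a continuous $d'\mathbb{Z}$-valued function on the connected $N$, hence a constant $c$.
\item This gives $S_\varphi A(p)=c\,\pi(p)$, which cannot change sign, contradicting $a<0<b$.
\end{itemize}

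Finally, drop the passage to a power of $A$. Sums for $A^m$ are integer multiples of sums for $A$, which muddles the bookkeeping. You can instead shadow pseudo-orbits that wind around the orbits of $p_0$ and $q_0$ directly.
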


\subsection{Analysis of one-dimensional dynamics}\label{subsec:one-dim}

Here we investigate the one-dimensional central dynamics, 
with a particular focus on behaviors slightly beyond the hyperbolic setting.
The following Proposition \ref{proposition: center intersection} serves as the main result of this subsection 
and provides a key generalization via the weak-version mechanism of boundary interconnection.
The proof is technical and may be skipped on a first reading.

Here we say that an orientation-preserving homeomorphism $T: (C, + \infty) \to (C', + \infty)$ 
is \emph{asymptotically} a translation by $a > 0$, if $T(t) - t \to a$ when $t\to +\infty$.
Recall that the fixed point $0$ is a topological sink 
for an orientation-preserving map $f\in\mathrm{Diff}_\partial([0, 1])$ 
if there exists $\delta>0$ such that $f(x) < x$ for each $x\in(0, \delta]$. 
This implies that $0< f'(0)\leq 1$.

\begin{lemma}\label{lemma: reparameterization}
    Let $f\in\mathrm{Diff}_\partial([0, 1])$ be a $C^1$ diffeomorphism with topological sink $0$. 
    Then there exists a reparameterization $\tau_f: (0, \delta] \to [C, +\infty)$,
    \[\tau_f(x) = \int_x^\delta\frac{du}{u - f(u)},\]
    such that $T_f := \tau_f\circ f\circ \tau_f^{-1}$ is asymptotically a translation by $t_f = -\ln f'(0)/(1 - f'(0))$. 
    Note that $t_f$ is well-defined when $0 < f'(0) < 1$ and is regarded as $1$ when $f'(0) = 1$.
\end{lemma}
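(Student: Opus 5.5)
The plan is to compute $T_f(t)-t$ directly and reduce it to an integral over a short interval near $0$. Write $g(u):=u-f(u)$. Since $0$ is a topological sink, we may choose $\delta>0$ so that $g(u)>0$ on $(0,\delta]$. Hence $\tau_f$ is well defined, $C^1$ and strictly decreasing on $(0,\delta]$, with $\tau_f(\delta)=0$ (so the constant $C$ may be taken as $0$).

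We first check that $\tau_f$ maps $(0,\delta]$ onto $[0,+\infty)$. Because $f$ is $C^1$ with $f(0)=0$, we have $g(u)=(1-f'(\xi_u))u\le Ku$ for some constant $K>0$. Therefore $\int_0^\delta du/g(u)\ge \int_0^\delta du/(Ku)=+\infty$, so $\tau_f(x)\to+\infty$ as $x\to0^+$. Since $f(x)<x$, the map $f$ sends $(0,\delta]$ into itself. Consequently $T_f=\tau_f\circ f\circ\tau_f^{-1}$ is an orientation-preserving homeomorphism from $[0,\infty)$ onto $[\tau_f(f(\delta)),\infty)$. For $t=\tau_f(x)$ we obtain
\[
T_f(t)-t=\tau_f(f(x))-\tau_f(x)=\int_{f(x)}^{x}\frac{du}{u-f(u)},
\]
and since $t\to+\infty$ exactly when $x\to0^+$, it suffices to find the limit of this integral as $x\to0^+$. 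We split into two cases according to $\lambda:=f'(0)\in(0,1]$.

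\emph{Case $0<\lambda<1$.} Since $g(u)/u=1-f(u)/u\to1-\lambda$, for any $\varepsilon>0$ and all small $u$ we have $(1-\lambda-\varepsilon)u\le g(u)\le(1-\lambda+\varepsilon)u$. This gives
\[
\frac{1}{1-\lambda+\varepsilon}\ln\frac{x}{f(x)}\le\int_{f(x)}^x\frac{du}{g(u)}\le\frac{1}{1-\lambda-\varepsilon}\ln\frac{x}{f(x)}.
\]
Using $x/f(x)\to1/\lambda$ and letting $\varepsilon\to0$, the limit is $-\ln\lambda/(1-\lambda)=t_f$.

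\emph{Case $\lambda=1$.} Here the logarithmic comparison degenerates, and this is the step needing care. With only $C^1$ regularity one cannot compare $g(\xi)$ with $g(x)$ pointwise without exploiting that the interval $[f(x),x]$ has length exactly $g(x)$. By the mean value theorem for integrals,
\[
\int_{f(x)}^x\frac{du}{g(u)}=\frac{x-f(x)}{g(\xi)}=\frac{g(x)}{g(\xi)}\quad\text{for some }\xi\in[f(x),x].
\]
Now $g'(u)=1-f'(u)\to0$ as $u\to0^+$, so for $x$ small we have $\sup_{[f(x),x]}|g'|\le\varepsilon$. Hence
\[
|g(x)-g(\xi)|\le\varepsilon(x-\xi)\le\varepsilon(x-f(x))=\varepsilon g(x),
\]
which shows $g(x)/g(\xi)\to1$. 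The limit is therefore $1$, which is the stated convention for $t_f$.

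Combining the two cases, $T_f(t)-t\to t_f$ as $t\to+\infty$. Thus $T_f$ is asymptotically a translation by $t_f$, which proves the lemma.
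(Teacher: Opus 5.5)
Your proof is correct and follows essentially the same route as the paper. Like the paper, you compute $T_f(t)-t=\int_{f(x)}^{x}\frac{du}{u-f(u)}$, split into the cases $f'(0)<1$ and $f'(0)=1$, and handle the parabolic case with the same mean-value estimate on $g(u)=u-f(u)$ over an interval of length exactly $g(x)$. Your sandwich $(1-\lambda\pm\varepsilon)u$ in the hyperbolic case is a slightly cleaner packaging of the paper's remainder bound $W_f(\varepsilon)$. You also check explicitly that $\tau_f$ maps onto $[0,+\infty)$ and that $f$ preserves $(0,\delta]$, which the paper leaves implicit.
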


\begin{proof}
    Since $0$ is a topological sink for $f$, 
    $\tau_f: (0, \delta] \to [C, +\infty)$ is a $C^2$ diffeomorphism. 
    Note that $T_f(t) > t$, for each $t\in [C, +\infty)$.

    Denote $\alpha = f'(0)\in(0,1]$. We have the expansion
    \[f(x) = \alpha x - R_f(x),\ R_f(x) = o(x)\ (x\to 0^+).\]
    For any $\varepsilon > 0$, there exists $x_\varepsilon \in(0, \delta)$ such that
    \[
    |R_f(x)| < \varepsilon x 
    \quad \text{and}\quad 
    |R_f'(x)| < \varepsilon,\ \forall\, x\in(0, x_\varepsilon).
    \]
    
    Assume that $0 < \alpha < 1$. Take $\varepsilon < \min\{\alpha, 1 - \alpha\}$, and then we have
    \begin{align*}
        T_f(t) - t &= \tau_f\circ f\circ \tau_f^{-1}(t) - \tau_f\circ\tau_f^{-1}(t)\\
        &= \int_{f\circ\tau_f^{-1}(t)}^{\tau_f^{-1}(t)}\frac{du}{u - f(u)}\\
        &= \int_{\alpha x - R_f(x)}^x\frac{du}{(1 - \alpha)u + R_f(u)}\quad \big(x = \tau_f^{-1}(t)\big)\\
        &= \int_{\alpha x - R_f(x)}^x\frac{du}{(1 - \alpha)u} - \int_{\alpha x - R_f(x)}^x\frac{R_f(u)du}{(1 - \alpha)u[(1 - \alpha)u + R_f(u)]}\\
        &= \frac{1}{1 - \alpha}\ln\frac{x}{\alpha x - R_f(x)} - \int_{\alpha x - R_f(x)}^x\frac{R_f(u)du}{(1 - \alpha)u[(1 - \alpha)u + R_f(u)]}.
    \end{align*}
    Therefore, for $t\in (\tau_f(x_\varepsilon), +\infty)$, or equivalently, $x\in (0, x_\varepsilon)$, we have
    \begin{align*}
        \left|T_f(t) - t - \frac{-\ln\alpha}{1 - \alpha}\right|
        & = \left|\frac{1}{1 - \alpha}\ln\frac{\alpha x}{\alpha x - R_f(x)} - \int_{\alpha x - R_f(x)}^x\frac{R_f(u)du}{(1 - \alpha)u[(1 - \alpha)u + R_f(u)]}\right|\\
        & \leq \frac{1}{1 - \alpha}\ln\frac{\alpha}{\alpha - \varepsilon} + \int_{\alpha x - R_f(x)}^x\frac{\varepsilon udu}{(1 - \alpha)u(1 - \alpha - \varepsilon)u}\\
        & \leq \frac{1}{1 - \alpha}\ln\frac{\alpha}{\alpha - \varepsilon} + \frac{\varepsilon}{(1 - \alpha)(1 - \alpha - \varepsilon)}\ln\frac{1}{\alpha - \varepsilon} \triangleq W_f(\varepsilon).
    \end{align*}
    Since $W_f(0^+) = 0$, we conclude that $T_f$ is asymptotically a translation by $-\ln\alpha/(1 - \alpha)$.

    Now assume that $\alpha = 1$. A similar argument shows that
    \begin{align*}
        |T_f(t) - t - 1| &\leq \int_{x - R_f(x)}^x\left|\frac{1}{R_f(u)} - \frac{1}{R_f(x)}\right|du\\
        & \leq \max_{u\in[x - R_f(x), x]}\left|\frac{R_f(x)}{R_f(u)} - 1\right|\\
        & \leq \max_{u\in[x - R_f(x), x]}\left|\frac{R_f(x)}{R_f(x) - R_f'(\xi)(x - u)} - 1\right|\quad \big(\xi = \xi(u) \in [u, x]\big)\\
        & \leq \frac{1}{1 - \varepsilon} - 1 = \frac{\varepsilon}{1 - \varepsilon}.
    \end{align*}
    It follows that $T_f$ is asymptotically a translation by $1$.
\end{proof}

Following the assumptions of Lemma \ref{lemma: reparameterization}, 
for any fixed $x\in (0, \delta]$, the interval $(f(x), x]$ is called a fundamental domain of $f$. 
Similarly, $[t, T_f(t))$ is called a fundamental domain of $T_f$. 
There is a natural projection $\pi_f^t : [t, +\infty) \to \mathbb{S}^1$ by identifying $s \sim T_f(s)$ for each $s\in[t, +\infty)$. 
Here, the metric on $\mathbb{S}^1$ is normalized to ensure that $\pi_f^t: [t, T_f(t)) \to \mathbb{S}^1$ is locally affine.

\begin{lemma}\label{lemma: asymptotical translation}
    Let $f\in\mathrm{Diff}_\partial([0, 1])$ be a real-analytic diffeomorphism with topological sink $0$. 
    Then there exists a constant $\rho_f \geq 1$ such that 
    for any closed interval $K$ lying in the closure of some fundamental domain $\Omega$ of $T_f$, 
    we have
    \[
    \rho_f^{-1}\frac{|K|}{|\Omega|} 
    \leq \frac{|T_f^k(K)|}{|T_f^k(\Omega)|} 
    \leq \rho_f\frac{|K|}{|\Omega|},\ \forall\, k\in\mathbb{N}.
    \]
\end{lemma}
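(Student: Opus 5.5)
The plan is to reduce the estimate to a uniform bounded-distortion statement for the iterates $T_f^k$ on fundamental domains. Concretely, I will show that there is a constant $D$, depending only on $f$, such that
\[
\left|\ln (T_f^k)'(u) - \ln (T_f^k)'(v)\right| \leq D
\]
for all $k\in\mathbb{N}$ and all $u,v$ in the closure of any fundamental domain $\Omega$ of $T_f$.

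Granting this bound, the lemma follows from the mean value theorem. We have $|T_f^k(K)| = (T_f^k)'(\xi)|K|$ and $|T_f^k(\Omega)| = (T_f^k)'(\eta)|\Omega|$ for some $\xi\in K$ and $\eta\in\overline{\Omega}$. Dividing, the ratio $|T_f^k(K)|/|T_f^k(\Omega)|$ equals $|K|/|\Omega|$ times $(T_f^k)'(\xi)/(T_f^k)'(\eta)$. This factor lies in $[e^{-D}, e^{D}]$, so one can take $\rho_f = e^{D}$.

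To get $D$, I write $\ln (T_f^k)' = \sum_{i=0}^{k-1}\ln T_f'\circ T_f^i$. For $u,v\in\overline{\Omega}$ the points $T_f^i(u), T_f^i(v)$ lie in $\overline{T_f^i(\Omega)}$, which is again a fundamental domain. By Lemma \ref{lemma: reparameterization}, every fundamental domain has length at most some uniform constant $L$, since $T_f(t)-t\to t_f$ and the map is continuous on $[C,+\infty)$. It therefore suffices to prove
\[
\sum_{i\geq 0}\ \sup_{\overline{T_f^i(\Omega_0)}} \left|(\ln T_f')'\right| < \infty
\]
for the fundamental domains $T_f^i(\Omega_0)$ of one fixed reference orbit. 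An arbitrary $\Omega\subset[C,+\infty)$ is covered by at most two consecutive such domains, and its tail sums are dominated by the total sum. This gives $D \leq 2L\sum_i \sup|(\ln T_f')'|$.

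The main step is an explicit computation in the $x$-coordinate, and this is where analyticity enters. Put $g(x) = x - f(x)$, so $\tau_f'(x) = -1/g(x)$. With $x = \tau_f^{-1}(t)$ one gets
\[
\ln T_f'(t) = \ln f'(x) + \ln g(x) - \ln g(f(x)).
\]
Since $f$ is analytic and $0$ is a topological sink, $g$ is not identically zero near $0$. Hence $g(x) = a x^m(1+cx+O(x^2))$ for some $a>0$ and $m\geq 1$.

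In the hyperbolic case $m=1$, the right-hand side is an analytic function of $x$ with value $0$ at $x=0$, hence $O(x)$. In the parabolic case $m\geq 2$, I expand $f'(x) = 1 - amx^{m-1}+\cdots$ and $g(f(x))/g(x) = (f(x)/x)^m(1+O(x^m))$. The terms of order $x^{m-1}$ cancel, leaving
\[
\ln T_f'(t) = h(x) = O(x^{m}) + O(x^{2m-2}),
\]
where $h$ is analytic at $0$. In both cases $h$ vanishes to order at least $m$ at $0$ (for $m=1$ because $h(0)=0$, for $m\geq 2$ because $2m-2\geq m$), so $h'(x) = O(x^{m-1})$.

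Using $dx/dt = -g(x)$, this gives
\[
(\ln T_f')'(t) = -h'(x)g(x) = O(x^{2m-1}).
\]
Along an orbit $x_i = f^i(x_0)$ we have $x_i\sim \alpha^i$ when $m=1$, and $x_i\asymp i^{-1/(m-1)}$ when $m\geq 2$ (the standard parabolic asymptotics). Since $(2m-1)/(m-1) > 1$, the series $\sum_i x_i^{2m-1}$ converges in both cases. On a fundamental domain the values of $x$ are comparable to $x_i$, because $f(x)/x \to \alpha > 0$, so the sup over $\overline{T_f^i(\Omega_0)}$ is also $O(x_i^{2m-1})$ and the sum above is finite.

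The delicate point I expect is the cancellation of the $x^{m-1}$ terms in the parabolic expansion. Without analyticity (finite flatness) this cancellation is not guaranteed. I would verify it carefully via the Taylor expansions of $f'$ and $(f(x)/x)^m$.
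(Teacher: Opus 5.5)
Your argument is correct, and its skeleton is the paper's: the mean value theorem, reduction to bounded distortion of $T_f^k$ on fundamental domains, the identity $\ln T_f'\circ\tau_f=\ln f'+\ln g-\ln g\circ f$, and a hyperbolic/parabolic split coming from analyticity. The difference is in how the distortion sum is controlled. The paper pulls the sum back to the $x$-coordinate. With $h=\ln T_f'\circ\tau_f$ and $x_1,x_2$ in one fundamental domain of $f$,
\[
\Bigl|\sum_{j=0}^{k-1}\bigl(h(f^j(x_1))-h(f^j(x_2))\bigr)\Bigr|\leq \sup|h'|\cdot\sum_{j}|f^j(x_1)-f^j(x_2)|\leq \sup|h'|\cdot\delta ,
\]
because the iterates of a fundamental domain of $f$ are pairwise disjoint. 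So the paper needs only that $h'$ is bounded near $0$. You instead stay in the $t$-coordinate and bound $|T_f^i(u)-T_f^i(v)|$ by the uniform length $L$. This forces the stronger summability $\sum_i\sup|(\ln T_f')'|<\infty$, which you get from the vanishing order $h'(x)=O(x^{m-1})$ and the parabolic orbit asymptotics $x_i\asymp i^{-1/(m-1)}$. That extra work is avoidable even within your framework. Since $(\ln T_f')'(t)=-h'(x)g(x)$, and $g$ is comparable to $g(x_i)=x_i-x_{i+1}$ on the $i$-th domain $[x_{i+1},x_i]$, the supremum there is at most a constant times $\sup|h'|\,(x_i-x_{i+1})$, and these terms telescope to at most $\sup|h'|\,x_0$. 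In particular, the cancellation of the $x^{m-1}$ terms that you flag as the delicate point plays no role. The function $h$ is analytic at $0$, because $f'$ and $g/(g\circ f)$ are analytic and nonvanishing there, so $h'$ is bounded, and that suffices. Analyticity matters only because it gives the finite-order expansion of $g$. In exchange, your route gives a sharper, quantitative decay $O(x_i^{2m-1})$ of the distortion on the $i$-th fundamental domain, which the lemma does not need.
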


\begin{proof}
    Assume that $\Omega = [a, b)$ and $K = [c, d]$, with $a \leq c < d \leq b$. We have
    \begin{align*}
        \frac{|T_f^k(K)|}{|T_f^k(\Omega)|} & = \frac{T_f^k(d) - T_f^k(c)}{T_f^k(b) - T_f^k(a)}\\
        & = \frac{d - c}{b - a}\cdot\frac{(T_f^k)'(\xi_1)}{(T_f^k)'(\xi_2)}\quad \big(\xi_1, \xi_2\in[a, b]\big)\\
        & = \frac{|K|}{|\Omega|}\cdot \exp\sum_{j = 0}^{k - 1}\ln\frac{T_f'(T_f^j(\xi_1))}{T_f'(T_f^j(\xi_2))}.
    \end{align*}
    Take $x_i = \tau_f^{-1}(\xi_i)$, $i = 1$, $2$. 
    Then $x_1$ and $x_2$ lie in the same fundamental domain of $f$, and
    \begin{align*}
        \left|\sum_{j = 0}^{k - 1}\ln\frac{T_f'(T_f^j(\xi_1))}{T_f'(T_f^j(\xi_2))}\right| & = \left|\sum_{j = 0}^{k - 1}\ln\frac{T_f'\circ\tau_f\circ f^j(x_1)}{T_f'\circ \tau_f\circ f^j(x_2)}\right|\\
        & \leq \sup_{x\in(0, \delta)}|(\ln T_f'\circ \tau_f)'(x)|\cdot \sum_{j = 0}^{k - 1}|f^j(x_1) - f^j(x_2)|\\
        & \leq \sup_{x\in(0, \delta)}|(\ln T_f'\circ \tau_f)'(x)|.
    \end{align*}
    It remains to show that $(\ln T_f'\circ\tau_f)'$ is bounded near zero. 
    Recall that $T_f = \tau_f\circ f\circ \tau_f^{-1}$. 
    It follows that $(T_f\circ\tau_f)' = (\tau_f\circ f)'$, 
    and hence $(T_f'\circ\tau_f)\cdot \tau_f' = (\tau_f'\circ f)\cdot f'$. Therefore,
    \begin{align*}
        (\ln T_f'\circ\tau_f)' &= \left(\ln \frac{\tau_f'\circ f}{\tau_f'} + \ln f'\right)'\\
        &= \left(\ln \frac{x - f(x)}{f(x) - f^2(x)}\right)' + \frac{f''}{f'}.
    \end{align*}
    Since $f'(0)\in(0,1]$, we have that $f''/f'$ is bounded near zero. Denote $w(x) = x - f(x)$, and the rest is written as
    \begin{align*}
        \left(\ln\frac{w}{w\circ f}\right)' &= \frac{w\circ f}{w}\cdot \frac{w'\cdot(w\circ f) - w\cdot(w'\circ f)\cdot f'}{w^2\circ f}\\
        &= \frac{w'\cdot(w\circ f) - w\cdot(w'\circ f)\cdot f'}{w\cdot(w\circ f)}.
    \end{align*}
    
    If $f'(0) = \alpha \in (0, 1)$, then $w(x) = (1 - \alpha)x + O(x^2)$. It follows that
    \[\left(\ln\frac{w}{w\circ f}\right)' = \frac{O(x^2)}{\alpha(1 - \alpha)^2x^2 + O(x^3)}\]
    is bounded near zero.
    
    If $f'(0) = 1$, then $w(x) = \gamma x^m + O(x^{m + 1})$ for some $\gamma > 0$ and $m \geq 2$. It follows that
    \[\left(\ln\frac{w}{w\circ f}\right)' = \frac{O(x^{2m})}{\gamma^2x^{2m} + O(x^{2m + 1})}\]
    is also bounded near zero.
\end{proof}

\begin{corollary}\label{corollary: uniformly bounded}
    There exists a constant $C=C_f \geq 1$, such that for any small closed interval $K$ in $[t, +\infty)$, 
    the projection $\pi_f^t(K)$ is a closed interval satisfying
    \[C^{-1}|K| \leq |\pi_f^t(K)| \leq C|K|.\]
\end{corollary}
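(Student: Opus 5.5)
The plan is to combine the bounded-distortion estimate of Lemma \ref{lemma: asymptotical translation} with the fact that $T_f$ is asymptotically a translation (Lemma \ref{lemma: reparameterization}). The target is a two-sided estimate for the affine-normalized projection $\pi_f^t$ on a single fundamental domain, which we then transport to every later fundamental domain.

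\textbf{Reduction to one fundamental domain.} By definition, $\pi_f^t$ restricted to $\Omega_0 := [t, T_f(t))$ is locally affine onto $\mathbb{S}^1$, with $\mathbb{S}^1$ normalized to length one. Hence for $K \subseteq \overline{\Omega_0}$ we have $|\pi_f^t(K)| = |K|/|\Omega_0|$. For a general point $s \geq t$, write $s \in T_f^k(\Omega_0)$. A small closed interval $K$ (say of length less than half of $\inf_k |T_f^k(\Omega_0)|$) meets at most two consecutive domains $\Omega_k := T_f^k(\Omega_0)$ and $\Omega_{k+1}$. If it meets two, split $K$ at the common endpoint; both pieces project to adjacent arcs of $\mathbb{S}^1$, so $\pi_f^t(K)$ is still a closed interval (an arc). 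Its length is the sum of the lengths of the two pieces, so it suffices to treat $K \subseteq \overline{\Omega_k}$.

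\textbf{Transport by $T_f^{-k}$.} For $K \subseteq \overline{\Omega_k}$, set $K_0 := T_f^{-k}(K) \subseteq \overline{\Omega_0}$. Then $\pi_f^t(K) = \pi_f^t(K_0)$, so
\[
|\pi_f^t(K)| = \frac{|K_0|}{|\Omega_0|}.
\]
Lemma \ref{lemma: asymptotical translation} gives
\[
\rho_f^{-1}\frac{|K_0|}{|\Omega_0|} \leq \frac{|K|}{|\Omega_k|} \leq \rho_f\frac{|K_0|}{|\Omega_0|},
\]
so that $|\pi_f^t(K)| \asymp_{\rho_f} |K|/|\Omega_k|$. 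It remains to bound $|\Omega_k| = T_f^{k+1}(t) - T_f^k(t)$ above and below uniformly in $k$.

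\textbf{Uniform bounds on $|\Omega_k|$.} Since $T_f(s) - s > 0$ is continuous on $[C, +\infty)$ and tends to $t_f > 0$ as $s \to +\infty$, it is bounded between two positive constants on $[t, +\infty)$. Consequently $|\Omega_k| \in [m_t, M_t]$ for all $k$, with $0 < m_t \leq M_t$.

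\textbf{Conclusion and main obstacle.} Combining the steps, with the splitting costing at most a factor of $2$, gives
\[
C^{-1}|K| \leq |\pi_f^t(K)| \leq C|K|, \qquad C = 2\rho_f \max\{M_t, m_t^{-1}\}.
\]
The main subtlety is uniformity: $C$ depends on $t$ through $m_t$ and $M_t$. This is harmless for a fixed base point $t$, and the bounds become uniform for $t$ large because $|\Omega_k| \to t_f$. The only delicate point is making "small" precise, so that $K$ meets at most two fundamental domains and its projection does not wrap around the circle.
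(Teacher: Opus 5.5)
Your proposal is correct and is essentially the paper's argument. You pull a short $K$ back into at most two pieces of $\Omega=[t,T_f(t))$, apply the distortion bound of Lemma~\ref{lemma: asymptotical translation}, and use $|T_f^k(\Omega)|\to t_f>0$ to bound the fundamental-domain lengths between two positive constants (the paper's $\omega^\pm$, your $m_t,M_t$).

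The one thing to tighten is the meaning of ``small''. Require $|K|<\inf_{s\ge t}\bigl(T_f(s)-s\bigr)$, which is positive because $T_f(s)-s$ is continuous, positive and tends to $t_f$; equivalently, $K$ contains no fundamental domain, as in the paper. This guarantees both that $K$ meets at most two $\Omega_k$ and that $\pi_f^t(K)$ is a proper closed arc, whereas your condition $|K|<\tfrac12\inf_k|\Omega_k|$ by itself only guarantees the former.
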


\begin{proof}
    Recall that $T_f$ is asymptotically a translation by $t_f$. 
    It follows that $|T_f^k(\Omega)| \to t_f$ for any fundamental domain $\Omega$ of $T_f$. 
    In particular, choose $\Omega = [t, T_f(t))$, and we know that
    \[
    0 < \omega^- := 
    \inf_{k\in\mathbb{N}}|T_f^k(\Omega)|\leq \sup_{k\in\mathbb{N}}|T_f^k(\Omega)|
    =:\omega^+  < +\infty.
    \] 
    When $K$ is sufficiently small so that $K$ does not contain any fundamental domain of $T_f$, there exist at most two intervals $K_1$, $K_2$ in $\Omega$ and $k_1$, $k_2\in\mathbb{N}$ 
    such that $K = T_f^{k_1}(K_1)\sqcup T_f^{k_2}(K_2)$. 
    Lemma \ref{lemma: asymptotical translation} implies that
    \[|\pi_f^t(K)| = \frac{|K_1|}{|\Omega|} + \frac{|K_2|}{|\Omega|} \in \left[\frac{|K|}{\rho_f\omega^+},\frac{\rho_f|K|}{\omega^-}\right].\]
    This completes the proof.
\end{proof}

If there are two $C^1$ diffeomorphisms $f$, $g\in\mathrm{Diff}_\partial([0, 1])$ admitting topological sink $0$, 
then we define the following asymptotic arithmetic parameter
\[[f/g] := \lim_{x\to 0^+}\frac{t_f\cdot(x - f(x))}{t_g\cdot(x - g(x))},\]
whenever the limit exists and takes a value in $[0, +\infty)$.
This parameter coincides with the logarithmic multiplier ratio appearing in \cite{LT24} under hyperbolic assumptions.

For an analytic diffeomorphism $f\in\mathrm{Diff}_\partial([0,1])$, 
we denote by $\mathrm{ord}(f)$ the order of $f$ which is defined as follows:
If $f'(0)\neq 1$, let $\mathrm{ord}(f) = 1$; if $f \equiv \mathrm{Id}$, let $\mathrm{ord}(f) = +\infty$; 
otherwise there exists $m \geq 2$ such that
\[f(x) = x + \frac{1}{m!}f^{(m)}(0)x^m + o(x^m),\]
and we let $\mathrm{ord}(f) = m$.

\begin{proposition}\label{proposition: center intersection}
    Let $f$, $g\in\mathrm{Diff}_\partial([0, 1])$ be real-analytic diffeomorphisms with topological sink $0$, 
    and let $h \in \mathrm{Diff}_\partial([0, 1])$. 
    If $g'(0) \neq 1$ or $h'(0) = 1$, then there exist $\varepsilon_0 > 0$ and an increasing function $L(\varepsilon): (0, \varepsilon_0) \to (0, +\infty)$ 
    depending on $g$, satisfying $L(0^+) = 0$, such that the following conclusion holds: 
    If $[f/g] = 0$ or $([f/g],\ 1) < \varepsilon$, 
    then for any two closed intervals $I$, $J\subseteq (0, \delta]$ with $|J| > L(\varepsilon)$, 
    there are two strictly increasing subsequences $\{k_n\}$, $\{l_n\}$ of $\mathbb{N}$, and a constant $C > 0$, such that
    \[
    |h\circ f^{k_n}(I)\cap g^{l_n}(J)| \geq C \min\{e^{\sigma_f(k_n)}, e^{\sigma_g(l_n)}\},
    \ \forall\, n\in\mathbb{N},
    \]
    where
    \[
    \sigma_f(n) := \left\{\begin{array}{ll}
         n\ln f'(0), & \mathrm{ord}(f) = 1; \\[1ex]
         -\frac{\mathrm{ord}(f)}{\mathrm{ord}(f) - 1}\ln n, & \mathrm{ord}(f) \geq 2,
    \end{array}\right.
    \]
    and
    \[\sup_{n\in\mathbb{N}}\left|\frac{\sigma_f(k_n)}{\mathrm{ord}(f)} - \frac{\sigma_g(l_n)}{\mathrm{ord}(g)}\right| < +\infty.\]
\end{proposition}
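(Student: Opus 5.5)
Throughout, I work in the reparameterized coordinates of Lemma~\ref{lemma: reparameterization}. Put $\tau_f$, $\tau_g$ on $(0,\delta]$. Then $f$ and $g$ become $T_f$ and $T_g$, which are asymptotically translations by $t_f$ and $t_g$. Iterating $f$ on $I$ means pushing $\tau_f(I)$ to $+\infty$ in steps of roughly $t_f$. Likewise, iterating $g$ on $J$ pushes $\tau_g(J)$ in steps of roughly $t_g$.

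\textbf{Step 1 (compare the two coordinates).} To compare the two pictures, I study the transition map $\Theta:=\tau_g\circ h\circ\tau_f^{-1}$ near $+\infty$. The hypothesis "$g'(0)\ne1$ or $h'(0)=1$" is used here. For small $x$ one has $\tau_g'(x)=-1/(x-g(x))$. Hence
\[
\Theta'(s)\approx\frac{(\tau_f^{-1}(s))-f(\tau_f^{-1}(s))}{h(y)-g(h(y))}\,h'(y),\qquad y=\tau_f^{-1}(s).
\]
When $g'(0)\neq 1$, the quantity $w_g(h(y))$ is comparable to $h'(0)\,w_g(y)$, so the factor $h'(0)$ cancels. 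When $h'(0)=1$, $h$ is tangent to the identity and the same cancellation holds. In both cases, the definition of $[f/g]$ gives
\[
\Theta'(s)\to \frac{t_g}{t_f}\cdot\lim\frac{t_f w_f}{t_g w_g}=\frac{t_g}{t_f}[f/g].
\]
So, measured in units of the fundamental-domain lengths ($t_f$ and $t_g$), $\Theta$ is asymptotically affine with slope $[f/g]$. I will use the normalized circle projections $\pi^t_f$, $\pi^t_g$ together with Corollary~\ref{corollary: uniformly bounded}. With these, one step of $f$ corresponds to an advance of $1$ in the normalized $f$-coordinate, and this in turn corresponds to an advance of about $[f/g]$ in the normalized $g$-coordinate modulo $1$.

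\textbf{Step 2 (arithmetic choice of times).} If $[f/g]=0$, then the images $\Theta(T_f^{k}(\tau_f I))$ move by $o(1)$ per step in the $g$-circle. Their circle positions therefore drift arbitrarily slowly while $k$ grows. If instead $([f/g],1)<\varepsilon$, there are integers $a,b$ with $0<|a[f/g]-b|<\varepsilon$, so the sequence $k\mapsto k[f/g] \bmod 1$ visits every arc of length $\varepsilon$. In either case I choose the times $k_n$ so that, modulo $1$, the projection $\pi_g(\Theta(T_f^{k_n}\tau_f I))$ meets the projection $\pi_g(\tau_g J)$. This is possible as soon as the arc $\pi_g(\tau_g J)$ has length at least comparable to $\varepsilon$ plus the error terms. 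By Corollary~\ref{corollary: uniformly bounded}, that arc has length comparable to $|J|$ up to the constant $C_g$, which is where the condition $|J|>L(\varepsilon)$, with $L(\varepsilon)\approx C_g\varepsilon$, comes from. I then take $l_n$ to be the integer that lifts this circle meeting to a genuine overlap on the line, i.e.\ $\Theta(T_f^{k_n}\tau_fI)\cap T_g^{l_n}\tau_gJ\ne\varnothing$. Lemma~\ref{lemma: asymptotical translation} guarantees that the overlap occupies a uniformly positive proportion of a fundamental domain.

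\textbf{Step 3 (back to original coordinates).} Next I pull the overlap back to $(0,\delta]$. An interval occupying a fixed fraction of the $l$-th fundamental domain of $g$ has length comparable to $g^{l}(x)-g^{l+1}(x)$. For $\mathrm{ord}(g)=1$ this is $\asymp g'(0)^l$. For $\mathrm{ord}(g)=m\ge2$, the standard parabolic asymptotics $g^l(x)\asymp l^{-1/(m-1)}$ give $\asymp l^{-m/(m-1)}$. This is precisely $e^{\sigma_g(l)}$, and similarly for $f$; hence the lower bound $C\min\{e^{\sigma_f(k_n)},e^{\sigma_g(l_n)}\}$.

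\textbf{Step 4 (coupling of $k_n$ and $l_n$).} The final condition $\sup_n|\sigma_f(k_n)/\mathrm{ord} f-\sigma_g(l_n)/\mathrm{ord} g|<\infty$ says that the positions $f^{k_n}(x)$ and $g^{l_n}(y)$ are comparable. This holds because the two intervals overlap, after $h$, which is bi-Lipschitz.

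\textbf{Main obstacle.} I expect the hardest part to be Step 1 combined with Step 2 in the mixed and parabolic cases. There one must check that the errors in the asymptotic translation and in the slope of $\Theta$ accumulate to only $o(1)$ over the relevant blocks of times. This requires analyticity through the boundedness of $(\ln T_f'\circ\tau_f)'$ from Lemma~\ref{lemma: asymptotical translation}, which yields bounded distortion uniformly in $k$.
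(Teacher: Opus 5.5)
Your architecture coincides with the paper's. You use the transition map $\tau_g\circ h\circ\tau_f^{-1}$ with normalized asymptotic slope $[f/g]$, which is exactly where the hypothesis $g'(0)\neq1$ or $h'(0)=1$ enters. You use density on the $g$-circle when $[f/g]=0$, and bounded blocks of times when $([f/g],1)<\varepsilon$. You finish with pull-back asymptotics and comparability of positions.

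The gap is in the passage from Step 2 to Step 3. You only arrange that the projection of $\Theta(T_f^{k_n}\tau_fI)$ \emph{meets} that of $\tau_gJ$, and then claim that Lemma~\ref{lemma: asymptotical translation} makes the overlap a uniformly positive proportion of a fundamental domain. That lemma controls each image interval relative to its \emph{own} fundamental domain. It says nothing about the intersection of two intervals that merely meet, and that intersection can be arbitrarily short or a single endpoint.

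The case $[f/g]=0$ is worse. There $\Theta'\to0$, so $|\Theta(T_f^{k}\tau_fI)|\to0$, and the overlap can never be a fixed fraction of a $g$-fundamental domain. The only available lower bound is then the full length $|h\circ f^{k_n}(I)|\asymp e^{\sigma_f(k_n)}$. That bound requires knowing $h\circ f^{k_n}(I)\subseteq g^{l_n}(J)$, which your construction never provides. This dichotomy is exactly what the $\min$ in the statement records, and your Step 3 (``similarly for $f$'') invokes it without having set it up.

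The missing step is the paper's middle-third argument. Write $J=[c,d]$ with middle third $J'=[c',d']$, and replace $L(\varepsilon)$ by $3L(\varepsilon)$. Arrange that $h\circ f^{k_n}(I)$ meets $g^{l_n}(J')$, for instance by aiming a single point $x\in I$ at $J'$, as the paper does. Then one of two things happens:
\begin{itemize}
\item $h\circ f^{k_n}(I)\subseteq g^{l_n}(J)$. The overlap is all of $h\circ f^{k_n}(I)$, of length at least a constant times $e^{\sigma_f(k_n)}$.
\item Otherwise, $h\circ f^{k_n}(I)$ is connected, meets $g^{l_n}(J')$ and leaves $g^{l_n}(J)$. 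So it contains $g^{l_n}([c,c'])$ or $g^{l_n}([d',d])$, each of length at least a constant times $e^{\sigma_g(l_n)}$.
\end{itemize}

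With this inserted, your Steps 3 and 4 go through. Your way of obtaining the lengths is a fine substitute for the paper's direct product-of-derivatives computation. It uses bounded distortion of $\tau_g$ on fundamental domains together with $g^{l}(x)-g^{l+1}(x)\asymp e^{\sigma_g(l)}$, plus the bi-Lipschitz property of $h$ on the $f$-side.
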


\begin{proof}
    Given a single point $x\in(0, 1)$ and a closed interval $J\subseteq (0, 1)$, 
    in order to find $k$, $l\in\mathbb{N}$ such that $h\circ f^k(x)\in g^l(J)$, 
    it suffices to show that $H\circ T_f^k(t)\in T_g^l(K)$, 
    where $H = \tau_g\circ h\circ \tau_f^{-1}$, $t = \tau_f(x)$, and $K = \tau_g(J)$. 
    See the following diagram.
    \[
    \xymatrix{[C_f, +\infty)\ar[d]_{T_f} 
    & (0, \delta_f] \ar[l]_{\ \ \tau_f} \ar[r]^{h}\ar[d]_f 
    & (0, \delta_g] \ar[r]^{\tau_g\ \ } \ar[d]^{g} 
    & [C_g, +\infty)\ar[d]^{T_g}\\ [C_f, +\infty) 
    & (0, \delta_f] \ar[l]^{\ \ \tau_f} \ar[r]_h
    & (0, \delta_g] \ar[r]_{\tau_g\ \ } 
    & [C_g, +\infty)}
    \]
    
    Notice that
    \[
    H'(t) 
    = \tau_g'(h(x))h'(x)(\tau_f^{-1})'(t) 
    = h'(x)\frac{\tau_g'(h(x))}{\tau_f'(x)} 
    = h'(x)\frac{x - f(x)}{h(x) - g(h(x))}.
    \]
    If $g'(0) \neq 1$, we have the expansion $g(x) = g'(0)x - R_g(x)$, where $R_g(x) = o(x)\ (x\to 0^+)$, and hence
    \[
    \lim_{t\to+\infty} H'(t) 
    = h'(0)\lim_{x\to 0^+}\frac{x - f(x)}{x - g(x)}\lim_{x\to 0^+}\frac{(1 - g'(0))x + R_g(x)}{(1 - g'(0))h(x) + R_g(h(x))} 
    = \frac{t_g}{t_f}[f/g].
    \]
    If $g'(0) = 1$ and $h'(0) = 1$, then $R_g(x) = \beta x^m + o(x^m)$ for some $\beta > 0$ and $m \geq 2$, hence
    \[
    \lim_{t\to +\infty} H'(t) 
    = h'(0)\lim_{x\to 0^+}\frac{x - f(x)}{x - g(x)}\lim_{x\to 0^+}\frac{R_g(x)}{R_g(h(x))} 
    = \frac{t_g}{t_f}[f/g].
    \]
    In both cases, we can define
    \begin{align*}
        \widetilde{H}   & = t_g^{-1}\circ H\circ t_f,\\
        \widetilde{T}_f & = t_f^{-1}\circ T_f,\\
        \widetilde{T}_g & = t_g^{-1}\circ T_g.
    \end{align*}
    Then $$\widetilde{H}'(+\infty):=\lim_{t\to + \infty}\widetilde{H}'(t) = [f/g],$$ 
    and $\widetilde{T}_f,\widetilde{T}_g$ are asymptotically translations by $1$. 
    For the desired intersection, it now suffices to find $k$, $l\in\mathbb{N}$ such that 
    $\widetilde{H}(\widetilde{T}_f^k(t)) \in \widetilde{T}_g^l(K)$.

    Fix any $t_0$ and consider the projection $\pi_g^{t_0}: [t_0, +\infty) \to \mathbb{S}^1$.
    We first have the following.

    \begin{claim}\label{claim: density}
        There exist $\varepsilon_0 > 0$ and an increasing function $\widetilde{L}(\varepsilon): (0, \varepsilon_0) \to (0, +\infty)$ 
        such that the following holds:
        \begin{itemize}
            \item If $[f/g] = 0$, then $\Lambda_t := \{\pi_g^{t_0}(\widetilde{H}(\widetilde{T}_f^k(t))): k\in\mathbb{N}\}$ is dense in $\mathbb{S}^1$, for any $t \geq t_0$.
            \item If $([f/g], 1) < \varepsilon$, then there exists $k_0\in\mathbb{Z}^+$ depending on $[f/g]$ and $\varepsilon$, such that for any $t$ sufficiently large, $\Lambda_t^{k_0} :=\{\pi_g^{t_0}(\widetilde{H}(\widetilde{T}_f^k(t))): 0\leq k \leq k_0\}$ is $\widetilde{L}(\varepsilon)$-dense in $\mathbb{S}^1$.
        \end{itemize}
    \end{claim}
    \begin{proof}[Proof of Claim \ref{claim: density}]
        Take the fundamental domain $\Omega = [t_0, T_g(t_0))$, and denote $\Omega_j = T_g^j(\Omega)$. 
        
        First we consider the case $[f/g] = 0$, and show that the set $\Lambda_t$ is dense in $\mathbb{S}^1$. 
        Assume for contradiction that there exists an open interval $U$ in $\mathbb{S}^1$ such that $U\cap \Lambda_t = \varnothing$. 
        Then there exists an open interval $U_j$ in each $\Omega_j$ such that 
        $\widetilde{H}(\widetilde{T}_f^k(t))\not\in U_j$ for every $k\in \mathbb{N}$. 
        Moreover, according to Corollary \ref{corollary: uniformly bounded}, 
        we can choose $\inf_j|U_j| > 0$. 
        It then contradicts the fact that $\widetilde{T}^k(t) \to +\infty$ as $k\to+\infty$, $\widetilde{H}(+\infty) = +\infty$, 
        and
        \[
        |\widetilde{H}(\widetilde{T}_f^{k + 1}(t)) - \widetilde{H}(\widetilde{T}_f^k(t))| 
        \leq \widetilde{H}'(\xi_t^k)|\widetilde{T}^{k + 1}(t) - \widetilde{T}^k(t)| 
        \to 0\ (k \to +\infty).
        \]

        Then we consider the case $([f/g], 1) < \varepsilon$. 
        Notice that there exist $k_0$, $l_0\in\mathbb{Z}^+$ depending on $[f/g]$ and $\varepsilon$, 
        such that for any $x\in \mathbb{R}$, the set
        \[\{x + k[f/g] - l: 0\leq k \leq k_0, 0\leq l \leq l_0\}\]
        is $\varepsilon$-dense in $[x, x + 1]$. 
        In particular, since $\widetilde{T}_f$ and $\widetilde{T}_g$ are asymptotically translations by $1$, and $\widetilde{H}'(+\infty) = [f/g]$, we can take $x_0$ sufficiently large such that, 
        for any $x > x_0$,
        \begin{align*}
        &\max
        \left\{
        \sup_{0\leq k \leq k_0} \big\{|\widetilde{T}_f^k(x) - x - k|\big\}, \, 
        \sup_{0\leq l \leq l_0} \big\{|\widetilde{T}_g^{-l}(x) - x + l|\big\}, \,\,
        \big|\widetilde{H}'(x) - [f/g]\big| \,
        \right\}\\
        &\leq \frac{\varepsilon}{(1 + k_0 + [f/g])}.
        \end{align*}
        It follows that for $t$ sufficiently large,
        \[
        \sup_{\substack{0\leq k \leq k_0\\ 0\leq l \leq l_0}}|\widetilde{T}_g^{-l}(\widetilde{H}(\widetilde{T}_f^k(t))) - (\widetilde{H}(t) + k[f/g] - l)|
        \leq \frac{(1 + k_0 + [f/g])\varepsilon}{(1 + k_0 + [f/g])} = \varepsilon.
        \]
        Therefore,
        \[\{\widetilde{T}_g^{-l}(\widetilde{H}(\widetilde{T}_f^k(t))): 0\leq k \leq k_0, 0\leq l \leq l_0\}\]
        is $2\varepsilon$-dense in the fundamental domain $[\widetilde{H}(t), \widetilde{T}_g(\widetilde{H}(t))]$, 
        for any $t$ sufficiently large. 
        Finally, by Corollary \ref{corollary: uniformly bounded}, $\Lambda_t^{k_0}$ is $\widetilde{L}(\varepsilon)$-dense in $\mathbb{S}^1$, 
        where $\widetilde{L}(\varepsilon) = 2C_g\varepsilon$. 
        
        This completes the proof of Claim \ref{claim: density}.
    \end{proof}
     
    For any closed interval $K$ with $|K| \geq C_g\widetilde{L}(\varepsilon)$, 
    by Corollary \ref{corollary: uniformly bounded}, $|\pi_g^{t_0}(K)|\geq \widetilde{L}(\varepsilon)$. By Claim \ref{claim: density}, if $[f/g] = 0$, for any $t \geq t_0$, we have infinitely many $k\in\mathbb{N}$ such that $\pi_g^{t_0}(\widetilde{H}(\widetilde{T}_f^k(t))) \in \pi_g^{t_0}(K)$. Take a positive and strictly increasing subsequence $\{k_n\}$, and then there exists $l_n\in\mathbb{Z}$ such that $\widetilde{H}(\widetilde{T}_f^{k_n}(t)) \in \widetilde{T}_g^{l_n}(K)$. Since $\widetilde{T}_f^{k_n}(t)\to +\infty$ and $\widetilde{H}(+\infty) = +\infty$, by taking a subsequence again, we can assume that $\{l_n\}$ is also positive and increasing.

    On the other hand, if $([f/g], 1) < \varepsilon$, by Claim \ref{claim: density}, for any $t$ sufficiently large, 
    there exists $0\leq k \leq k_0$ such that $\pi_g^{t_0}(\widetilde{H}(\widetilde{T}_f^k(t))) \in \pi_g^{t_0}(K)$. 
    Equivalently, there exists $l\in\mathbb{Z}$ such that $\widetilde{H}(\widetilde{T}_f^k(t)) \in \widetilde{T}_g^l(K)$.
    For any $t$ and $K$, to obtain the sequences $\{k_n\}$ and $\{l_n\}$, 
    apply the above argument to $\widetilde{T}_f^n(t)$ and $K$, 
    since $\widetilde{T}_f^n(t)$ is sufficiently large as $n\to+\infty$. 
    We then have $\widetilde{H}(\widetilde{T}_f^{k'_n + n}(t)) \in \widetilde{T}_g^{l_n}(K)$ 
    for some $0\leq k'_n\leq k_0$ and $l_n\in\mathbb{Z}$. 
    Obviously $k_n := k'_n + n \to +\infty$, 
    and hence $l_n \to +\infty$. 
    By taking a subsequence if necessary, 
    we can require that the sequences $\{k_n\}$ and $\{l_n\}$ are positive and strictly increasing.

    Return to the discussion in $(0, \delta_g]$. Define
    \[L(\varepsilon) \triangleq \sup\{|\tau_g^{-1}(K)|: |K| = C_g\widetilde{L}(\varepsilon)\}.\]
    We conclude that when $[f/g] = 0$ or $([f/g], 1) < \varepsilon$, for any $x\in (0, \delta_f]$ and any closed interval $J\subseteq (0, \delta_g]$ with $|J| > L(\varepsilon)$, there exist strictly increasing subsequences $\{k_n\}$ and $\{l_n\}$ such that $h\circ f^{k_n}(x) \in g^{l_n}(J)$.
    
    Finally, we consider for two closed intervals $I$ and $J$. 
    Assume that $I = [a,b]$ and $J = [c,d]$, and take the middle third $J' = [c', d']$ of $J$. 
    By replacing $L(\varepsilon)$ by $3L(\varepsilon)$, 
    we apply the above argument to a single point $x\in I$ and the closed interval $J'$, 
    and conclude that there exist $\{k_n\}$ and $\{l_n\}$ such that $h\circ f^{k_n}(I)\cap g^{l_n}(J')\neq \varnothing$. 
    
    We then have the following two cases.
    
\noindent \textbf{Case 1: $h\circ f^{k_n}(I)$ is totally contained in $g^{l_n}(J)$.} 
    In this case we have 
    \[|h\circ f^{k_n}(I)\cap g^{l_n}(J)| = |h\circ f^{k_n}(I)|.\] 
    
    If $f(x) = e^{-\alpha} x + O(x^2)$ for some $\alpha > 0$, then we claim that there exists a constant $C > 0$ such that
    \[|h\circ f^{k_n}(I)|\sim Ce^{-k_n\alpha}.\]
    To see this, denote $x_n = f^n(x)$, and then $\sum_{i = 0}^{+\infty}x_i$ converges uniformly w.r.t. $x\in I$. Therefore,
    \[x_n = x\prod_{i = 0}^{n - 1}\frac{f^{i + 1}(x)}{f^i(x)} = xe^{-n\alpha}\prod_{i = 0}^{n - 1}(1 + O(x_i))\sim Ce^{-n\alpha}, \exists\, C > 0.\]
    Notice that
    \[|h\circ f^{k_n}(I)| = \int_a^b (h\circ f^{k_n})'(x)\,dx = \int_a^b h'(x_{k_n})\prod_{i = 0}^{k_n - 1}f'(x_i)\,dx = e^{-k_n\alpha}\cdot C_n,\]
    where
    \[C_n := \int_a^b h'(x_{k_n})\prod_{i = 0}^{k_n - 1}\frac{f'(x_i)}{e^{-\alpha}}\,dx = \int_a^bh'(x_{k_n})\prod_{i = 0}^{k_n - 1}(1 + O(x_i))\,dx\]
    converges to some constant $C > 0$.

    If $f(x) = x - \alpha x^m + O(x^{m + 1})$ for some $\alpha > 0$ and $m \geq 2$, then we claim that there exists a constant $C > 0$ such that
    \[|h\circ f^{k_n}(I)| \sim Ck_n^{-\frac{m}{m - 1}}.\]
    To see this, denote $x_n = f^n(x)$, $y_n = x_n^{-(m - 1)}$, and we have
    \begin{align*}
        &x_{n + 1} = x_n(1 - \alpha x_n^{m - 1} + o(x_n^{m - 1}));\\
        &y_{n + 1} = y_n(1 - \alpha y_n^{-1} + o(y_n^{-1}))^{-(m - 1)}.
    \end{align*}
    As a result,
    \[y_{n + 1} = y_n(1 + (m - 1)\alpha y_n^{-1} + o(y_n^{-1})),\]
    and hence
    \[y_{n + 1} - y_n \to (m - 1)\alpha, \text{ and } \frac{1}{n}y_n \to (m - 1)\alpha.\]
    Consequently,
    \[x_n \sim \left(\frac{1}{(m - 1)\alpha n}\right)^{\frac{1}{m - 1}}.\]
    Now we consider
    \begin{align*}
        |h\circ f^{k_n}(I)| &= \int_a^b h'(x_{k_n})f'(x)\prod_{i = 1}^{k_n - 1}f'(x_i)\,dx\\
        &= \int_a^b h'(x_{k_n})f'(x)\prod_{i = 1}^{k_n - 1}(1 - m\alpha x_i^{m - 1} + O(x_i^m))\,dx = {k_n}^{-\frac{m}{m - 1}}\cdot C_n,
    \end{align*}
    where
    \begin{align*}
        C_n &= k_n^{\frac{m}{m - 1}}\int_a^bh'(x_{k_n})f'(x)\exp\sum_{i = 1}^{k_n - 1}\ln\big(1 - m\alpha x_i^{m - 1} + O(x_i^m)\big)\,dx\\
        &= k_n^{\frac{m}{m - 1}}\int_a^b h'(x_{k_n})f'(x)\exp\sum_{i = 1}^{k_n - 1}\ln\left(1 - \frac{m}{m - 1}\frac{1}{i} + O(i^{-\frac{m}{m - 1}})\right)dx\\
        &= k_n^{\frac{m}{m - 1}}\int_a^b h'(x_{k_n})f'(x)\exp\sum_{i = 1}^{k_n - 1}\left(- \frac{m}{m - 1}\frac{1}{i} + O(i^{-\frac{m}{m - 1}})\right)dx\\
        &= k_n^{\frac{m}{m - 1}}\int_a^b h'(x_{k_n})f'(x)\exp\left(-\frac{m}{m - 1}\ln k_n + \mathrm{const.} + o(1)\right)dx
    \end{align*}
    converges to some constant $C > 0$.
    
    Summarizing the above discussion for \textbf{Case 1}, we have
    \[|h\circ f^{k_n}(I)\cap g^{l_n}(J)| \geq Ce^{\sigma_f(k_n)},\]
    for some constant $C > 0$.
    
\noindent \textbf{Case 2: $h\circ f^{k_n}(I)$ is not totally contained in $g^{l_n}(J)$.} 
    In this case, $h\circ f^{k_n}(I)$ must contain one of the connected components of $g^{l_n}(J)\setminus g^{l_n}(J')$: $g^{l_n}([c, c'])$ or $g^{l_n}([d',d])$. By a similar argument, we have
    \[|h\circ f^{k_n}(I)\cap g^{l_n}(J)| \geq Ce^{\sigma_g(l_n)}.\]
    
    Combining \textbf{Case 1} and \textbf{Case 2}, we obtain the final estimate:
    \[|h\circ f^{k_n}(I)\cap g^{l_n}(J)| \geq C\min\{e^{\sigma_f(k_n)}, e^{\sigma_g(l_n)}\}.\]
    Moreover, we consider the relationship between $\sigma_f(k_n)$ and $\sigma_g(l_n)$.
    
    Notice that there exists $C \geq 1$ such that for any $x$, $y \in (0, \delta_g]$ lying in the closure of the same fundamental domain of $g$, we have $C^{-1} \leq x/y \leq C$. In fact, assume that $x$, $y\in [g(z), z]$, and we have
    \[\frac{g(z)}{z}\leq\frac{x}{y}\leq \frac{z}{g(z)}.\]
    Since $g(z)/z \to g'(0) \in(0, 1]$, it suffices to take $C = \sup\{z/g(z): z\in(0, \delta_g]\}$.

    Now we have $h\circ f^{k_n}(x) \in g^{l_n}(J)$ for some $x\in I$. Therefore,
    \[h\circ f^{k_n}(x) \asymp g^{l_n}(d).\]
    Denote $x_n = f^{k_n}(x)$ and $y_n = g^{l_n}(d)$. By the above argument, we have
    \[
    \begin{alignedat}{2}
        & x_n \asymp \exp \frac{\sigma_f(k_n)}{\mathrm{ord}(f)}; \qquad & h(x_n) \asymp x_n;\\
        & y_n \asymp \exp \frac{\sigma_g(l_n)}{\mathrm{ord}(g)}; \qquad & h(x_n) \asymp y_n.
    \end{alignedat}
    \]
    Finally, we conclude that there exists a constant $C > 0$ such that
    \[\left|\frac{\sigma_f(k_n)}{\mathrm{ord}(f)} - \frac{\sigma_g(l_n)}{\mathrm{ord}(g)}\right| \leq C.\]
  
    This completes the proof of Proposition \ref{proposition: center intersection}.
\end{proof}

\section{Weak Boundary Interconnection Implies Topological Transitivity}\label{section: WBI to TT}

In this section, we establish topological transitivity for partially hyperbolic skew products 
with the mechanism of weak boundary interconnection. $F$ is assumed to be analytic.

First, we characterize the stable set of a topological sink.
Note that a subset of $M$ is called \textit{$\sigma$-saturated} if it is a union of $\sigma$-leaves, for $\sigma=s, u$.

\begin{lemma}\label{lemma: W^s and W^u}
    Let $p\in\mathrm{Per}^-(F_0)$ be a central topological sink with period $\pi(p)$. 
    Then there exists $0 < s_p \leq 1$ satisfying
    \[
    I_p^c := W^s(p)\cap\mathcal{F}^c(p) = [0, s_p) 
    \quad\text{and}\quad 
    W^s(p) = \bigcup_{x\in I_p^c}\mathcal{F}^s(x).
    \]
\end{lemma}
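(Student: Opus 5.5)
We reduce everything to the one-dimensional return map $\Phi_p$ on the central fiber $\mathcal{F}^c(p)=\{p\}\times[0,1]$ and then saturate by strong stable leaves. Replacing $F$ by $G:=F^{\pi(p)}$ changes neither the stable set (a point is forward asymptotic to the orbit of $p$ under $F$ iff it is forward asymptotic to $p$ under $G$, by uniform continuity of $F,\dots,F^{\pi(p)-1}$). So we may assume $p$ is fixed and $G(p,t)=(p,\Phi_p(t))$. Define
\[
s_p:=\sup\{s\in(0,1]:\ \Phi_p(t)<t\ \text{for all } t\in(0,s)\}.
\]
By Definition \ref{definition: TSS}, $s_p\geq\delta>0$. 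Moreover, if $s_p<1$, then continuity gives $\Phi_p(s_p)=s_p$.

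\textbf{Step 1: the central part.} For $t\in[0,s_p)$, the sequence $\Phi_p^n(t)$ is decreasing and bounded below by $0$. Its limit is a fixed point of $\Phi_p$ in $[0,t]$, and the only such fixed point is $0$. Hence $(p,t)\in W^s(p)$.

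Conversely, suppose $t\geq s_p$. If $s_p<1$, then $s_p$ is fixed, and monotonicity of $\Phi_p$ gives $\Phi_p^n(t)\geq s_p>0$ for all $n$. If $s_p=1$, then $t=1$ is fixed. In both cases $(p,t)\notin W^s(p)$, so $I_p^c=[0,s_p)$.

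\textbf{Step 2: saturation by stable leaves.} We use that $F$ is a skew product with $\mathcal{F}^s(x,t)$ contained in $\mathcal{F}^s(x)\times[0,1]$, and that $\mathcal{F}^s$ is $G$-invariant and uniformly contracted by domination.

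If $y\in\mathcal{F}^s(x)$ with $x\in I_p^c$, then $d(G^n y,G^n x)\to0$ and $G^n x\to p$, so $y\in W^s(p)$. This shows $\bigcup_{x\in I_p^c}\mathcal{F}^s(x)\subseteq W^s(p)$.

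Conversely, let $y\in W^s(p)$. The point $y$ lies in $\mathcal{F}^{cs}(p)=\mathcal{F}^s(p)\times[0,1]$: its base point satisfies $f^n(\cdot)\to p$, so it lies in the stable manifold of $p$ for $f$. Let $x:=\mathcal{F}^s(y)\pitchfork\mathcal{F}^c(p)$ be the stable holonomy image of $y$ on the central fiber. This is well defined globally because $\mathcal{F}^{cs}(p)$ is the product of the stable leaf with the interval and the strong stable foliation inside it is transverse to the center. Since $d(G^n x,G^n y)\to0$ and $G^n y\to p$, we get $G^n x\to p$, hence $x\in I_p^c$ by Step 1. Thus $y\in\mathcal{F}^s(x)$, which gives equality.

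\textbf{Main obstacle.} The only delicate point is the global well-definedness of the stable holonomy $\mathcal{F}^{cs}(p)\to\mathcal{F}^c(p)$, i.e.\ that each strong stable leaf in $\mathcal{F}^s(p)\times[0,1]$ meets the central fiber $\{p\}\times[0,1]$ exactly once. We would argue this by noting that the projection of a strong stable leaf to $\mathcal{F}^s(p)$ is a covering map. It is a local diffeomorphism by transversality, and it is complete because the boundary leaves $\mathcal{F}^s(p)\times\{0,1\}$ are invariant and stable leaves cannot escape through them. Since $\mathcal{F}^s(p)\cong\mathbb{R}^{\dim E^s}$ is simply connected, the projection is a bijection; equivalently, the leaf is the graph of a function over $\mathcal{F}^s(p)$. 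The dynamical argument above is then routine.
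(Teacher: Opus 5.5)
Your proposal is correct and follows essentially the paper's route: you define $s_p$ as the supremum, obtain the central part $[0,s_p)$ from monotone convergence of $\Phi_p^n$, and get the reverse inclusion from $W^s(p)\subseteq\mathcal{F}^{cs}(p)$ together with $s$-saturation. Your covering-map argument, that each strong stable leaf in $\mathcal{F}^s(p)\times[0,1]$ meets $\mathcal{F}^c(p)$ exactly once, fills in a step the paper takes for granted; note that the path lifting there rests on completeness of the leaves and the uniform angle between $E^s$ and $E^c$, not only on invariance of the boundary.
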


\begin{proof}
    Recall that $F^{\pi(p)}(p, t) = (p, \Phi_p(t))$. 
    Therefore, $\Phi_p \in\mathrm{Diff}_\partial([0, 1])$. 
    Since $p\in\mathrm{Per}^-(F_0)$, $\Phi_p$ has a minimal positive fixed point 
    $s_p = \sup\{s\in(0, 1]: \Phi_p(t) < t, \ \forall \, 0<t<s\}$. 
    It follows that $\Phi_p(t) < t$ for each $ t\in(0, s_p)$.

    By the above argument, $I_p^c := W^s(p)\cap\mathcal{F}^c(p) = [0, s_p)$. 
    Therefore we have
    $$W^s(p) \supseteq \bigcup_{x\in I_p^c}\mathcal{F}^s(x).$$
    Now take $y\in W^s(p)$, and then we have $\pi_0(y)\in\mathcal{F}^s(\pi_0(p))$, 
    where $\pi_0: M \to M_0$ is the natural projection. It follows that $y\in\mathcal{F}^{cs}(p)$. 
    Assume that $y\in\mathcal{F}^s(z)$ for some $z\in\mathcal{F}^c(p)$. 
    Then $z\in W^s(p)$ and hence $z\in I_p^c$, since $W^s(p)$ is $s$-saturated.
\end{proof}

The analogous conclusions hold for periodic points of both $F_0$ and $F_1$: 
the stable set (resp. unstable set) of a central topological sink-type (resp. topological source-type) periodic point 
can be well characterized by its contracting center (resp. expanding center). 
For simplicity, we omit the statements for the remaining cases.

\begin{lemma}\label{lemma: density of s and u}
    Assume that $F\in\mathrm{PHS}(M)$ is weakly boundary interconnected. Then for any $p\in\mathrm{Per}^-(F_0)\cup \mathrm{Per}^-(F_1)$, $W^s(p)$ is dense in $M$; for any $p\in \mathrm{Per}^+(F_0)\cup\mathrm{Per}^+(F_1)$, $W^u(p)$ is dense in $M$. In particular, for any $p_0\in\mathrm{Per}^-(F_0)$, $q_0\in\mathrm{Per}^+(F_0)$, $p_1\in\mathrm{Per}^+(F_1)$, $q_1\in\mathrm{Per}^-(F_1)$, we have
    \begin{align*}
        W^s(p_0)\pitchfork W^u(p_1)&\neq\varnothing;\\
        W^s(q_1)\pitchfork W^u(q_0)&\neq\varnothing.
    \end{align*}
\end{lemma}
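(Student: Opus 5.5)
The plan is to prove density of $W^s(p)$ for $p\in\mathrm{Per}^-(F_0)$ in detail. The case $p\in\mathrm{Per}^-(F_1)$ is symmetric, and the statements for $W^u$ of sources follow by applying the same argument to $F^{-1}$. The two tools are Lemma \ref{lemma: W^s and W^u} and the fact that every stable (and unstable) leaf of the Anosov diffeomorphism $f$ is dense in the nilmanifold $N$. Note that each stable holonomy between central fibres inside a $cs$-leaf is a homeomorphism of $[0,1]$ fixing $0$ and $1$. Hence by Lemma \ref{lemma: W^s and W^u}, inside each $cs$-leaf $\mathcal{F}^s(x)\times[0,1]\subseteq\mathcal{F}^{cs}(p)$ the set $W^s(p)$ is the region below a continuous height function $\theta_p>0$. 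So the whole task is to show that these heights can be pushed arbitrarily close to $1$ on a set whose base is dense.

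\textbf{Step 1: from the given interconnection to near-full fibres.} Take the points $p_0,p_1$ given by Definition \ref{definition: WBI} and $z\in W^s(p_0)\cap W^u(p_1)$, with base point $x_z$ and height $h\in(0,1)$.
\begin{itemize}
\item By the description of $W^s(p_0)$ as the region below a graph, the segment $\{x_z\}\times[0,h]$ lies in $W^s(p_0)$.
\item By the analogous description of $W^u(p_1)$, the segment $\{x_z\}\times[h,1]$ lies in $W^u(p_1)$.
\end{itemize}
Iterate backwards. The point $F^{-n\pi(p_1)}(z)$ converges to $(p_1,1)$, so its height tends to $1$. Since $W^s(p_0)$ is $F$-invariant, it contains the central segments $\{f^{-n\pi(p_1)}(x_z)\}\times[0,1-\eta_n]$ with $\eta_n\to 0$. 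These segments sit over base points accumulating on $p_1$.

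\textbf{Step 2: spreading to all of $M$.} Stable saturation of the segments from Step 1 gives local $cs$-discs in $W^s(p_0)$ near $\mathcal{F}^c(p_1)$ whose height is nearly $1$. This uses the uniform continuity of stable holonomy from Lemma \ref{lemma: C1-holonomy}. Applying $F^{-m}$ expands these discs along $\mathcal{F}^s$. Their bases $F^{-m}$ of local stable discs of points near $p_1$ become large pieces of stable leaves, and these become $\epsilon$-dense in $N$ uniformly as $m$ grows. The heights are controlled by the backward dynamics near the top boundary: $(p_1,1)$ is a central source, so it is a central sink for $F^{-1}$. Given an open set $V\times(a,b)$, I will pick $n$ so that $1-\eta_n>b$, then $m$ so that the expanded disc meets $V$, and conclude that $W^s(p_0)\cap V\times(a,b)\neq\varnothing$.

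\textbf{Step 3: passing to an arbitrary $p\in\mathrm{Per}^-(F_0)$.} I would show that $W^s(p)$ accumulates on $W^s(p_0)$. The leaf $\mathcal{F}^u(p_0)$ meets $\mathcal{F}^s(p)$ transversally in $M_0$ by density of leaves. By continuity, points of $\mathcal{F}^u(p_0,t)$ for small $t$ lie in $W^s(p)$, because $\theta_p>0$ and is continuous. Iterating these points backwards by $F^{-k\pi(p_0)}$ and using the central twist estimate of Lemma \ref{lemma: central twist} (with the uniform rate of Remark \ref{remark: uniform central twist}), the images accumulate on central segments of $\mathcal{F}^{cs}(p_0)$ that lie in $W^s(p_0)$. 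The $\lambda$-lemma-type argument of Step 2 then gives the density of $W^s(p)$. The "in particular" statement follows: $W^s(p_0)$ is a dense open subset of $cs$-leaves and $W^u(p_1)$ is a dense open subset of $cu$-leaves for any admissible choice of points. Since $cs$- and $cu$-leaves meet transversally along central fibres, density yields a nonempty transverse intersection.

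The main obstacle is Step 2, specifically keeping uniform control of the heights under $F^{-m}$ while the base moves away from the orbit of $p_1$. Along the stable leaf of $p_1$ the backward central maps are not simply the return map $\Phi_{p_1}^{-1}$. I would first try to compare them with $\Phi_{p_1}^{-1}$ via the central twist estimate, since the twist decays exponentially. If that fails, I would use the freedom to choose $z$ and $n$ so that only finitely many iterates away from $p_1$ are needed, with continuity handling the rest.
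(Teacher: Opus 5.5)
Your Step 1, your Step 3 and the deduction of the ``in particular'' part are sound. Step 3 is a legitimate alternative to the paper's route. The paper first proves that $W^u(p_1)$ is dense, gets $W^s(p)\pitchfork W^u(p_1)\neq\varnothing$, and reruns the argument with $p$ in place of $p_0$. You instead make $W^s(p)$ accumulate on $I^c_{p_0}$ through $\mathcal{F}^u(\widehat{p}_0)\cap\mathcal{F}^s(\widehat{p})$. The central twist estimate is not needed there, since $F^{-k\pi(p_0)}$ of your point lies on the unstable leaf of $(\widehat{p}_0,\Phi_{p_0}^{-k}(t))$ at exponentially small distance.

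The genuine gap is Step 2, on which everything else rests; you have located it but not closed it. After $F^{-m}$, the height of the expanded disc over the point that lands in $V$ is produced by the inverse fibre maps along a backward orbit that leaves the orbit of $p_1$. That orbit may, for instance, spend long stretches near the orbit of a central sink of $F_1$ such as $q_1$, where $F^{-1}$ pushes heights down. So the choice $1-\eta_n>b$ says nothing about that height. The central twist estimate of Lemma \ref{lemma: central twist} does not repair this: it compares $s$- and $u$-holonomies near a periodic point of $M_0$, and gives no control of heights along long stable paths.

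The missing idea is that no iteration is needed, because $W^s(p_0)$ is already $s$-saturated (Lemma \ref{lemma: W^s and W^u}); what matters is the order of quantifiers. Given a non-empty open set $U$, first use the density of $\mathcal{F}^{cs}(p_1)=\mathcal{F}^s(\widehat{p}_1)\times[0,1]$ (density of stable leaves of $f$) to fix $x\in\mathcal{F}^c(p_1)$ and a compact piece of $\mathcal{F}^s(x)$ reaching $U$. Only then invoke Step 1. The segments $\{f^{-n\pi(p_1)}(x_z)\}\times[0,1-\eta_n]\subseteq W^s(p_0)$ converge to the whole fibre $\mathcal{F}^c(p_1)$, so they contain points $x'_n\to x$. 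Continuity of the stable foliation on compact pieces then yields $\mathcal{F}^s(x'_n)\cap U\neq\varnothing$ for $n$ large, while $\mathcal{F}^s(x'_n)\subseteq W^s(p_0)$. This is the paper's proof, which phrases the compact-piece continuity through $F^K$ and local stable discs.

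If you prefer to keep your expanded discs, the same repair reads as follows. Measure heights by stable holonomy from the centre $F^{-m-n\pi(p_1)}(z)$, whose height tends to $1$ as $m\to\infty$ because $z\in W^u(p_1)$. Do this only over stable distance at most $R(\epsilon)$, where holonomies along stable paths of bounded length form an equicontinuous family of homeomorphisms of $[0,1]$ fixing $0$ and $1$. The far parts of the disc must not be used.
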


\begin{proof}
    Since \(F\) is weakly boundary interconnected, there are periodic points $p_0\in\mathrm{Per}^-(F_0)$, $q_0\in\mathrm{Per}^+(F_0)$, $p_1\in\mathrm{Per}^+(F_1)$, and $q_1\in\mathrm{Per}^-(F_1)$, such that the intersection property holds.
    
    For convenience, we define $\widehat{p}_0 := \pi_1(p_0)$ and $\widehat{p}_1 := \pi_0(p_1)$, 
    where $\pi_i : M \to  M_i$ is the natural projection for $i = 0, 1$. 
    Recall that $W^s(p_0)\pitchfork W^u(p_1)$ contains a central interval. 
    If the interval lies in a central leaf with endpoints $z_0\in M_0$ and $z_1\in M_1$, 
    then there exists an open central interval $J \subseteq \mathcal{F}^c(z_0)$ with an endpoint $z_0$, 
    lying in $W^s(p_0)$ and containing an interval in $W^u(p_1)$.

    Then consider $F^{-N}(J)$. One of its endpoints is $F_0^{-N}(z_0)$, which tends to $\widehat{p}_1$; 
    the other endpoint tends to $p_1$, since it lies in $W^u(p_1)$. 
    Moreover, $F^{-N}(J)$ is a central interval, hence $F^{-N}(J)$ tends to $\mathcal{F}^c(p_1)$, 
    in the sense that
    $$
    \forall\, x\in\mathcal{F}^c(p_1),\ \forall\, \varepsilon > 0,\ \exists\, N\in\mathbb{N},\ \exists\, x'\in F^{-N}(J),
    \ \text{such that}\ d(x',\ x) < \varepsilon.
    $$

    Since $F_0$ is an Anosov diffeomorphism on a nilmanifold, 
    the stable foliation restricted to $M_0$ is minimal. 
    It follows that $\mathcal{F}^{cs}$ is minimal, 
    because $\mathcal{F}^{cs}(\cdot) = \pi_0^{-1}(\mathcal{F}^s(\pi_0(\cdot)))$.

    Now $\mathcal{F}^{cs}(p_1)$ is dense in $M$. 
    For any non-empty open subset $U\subseteq M$, 
    there is $x\in\mathcal{F}^c(p_1)$ such that $\mathcal{F}^s(x)\cap U\neq\varnothing$. 
    Note that
    $$
    \mathcal{F}^s(x) = \bigcup_{k\in\mathbb{N}} F^{-k}(\mathcal{F}^s_\varepsilon(F^k(x))),
    \ \forall\, \varepsilon > 0.
    $$
    Therefore, there exists $K = K(\varepsilon)\in\mathbb{N}$ 
    such that $\mathcal{F}^s_\varepsilon(F^K(x))\cap F^K(U)\neq\varnothing$. 
    Recall that $F^{-N}(J)$ tends to $\mathcal{F}^c(p_1)$. 
    Then there are $N\in\mathbb{N}$ and $x'\in F^{-N}(J)$ 
    such that $\mathcal{F}^s_\varepsilon(F^K(x'))\cap F^K(U)\neq\varnothing$. 
    Now we have that $\mathcal{F}^s(F^N(F^K(x')))\cap F^{K + N}(U)\neq\varnothing$. 
    Since $F^N(F^K(x'))\in F^K(J)\subseteq W^s(p_0)$, we conclude that
    $$W^s(p_0) \cap U = F^{-(K + N)}(W^s(p_0))\cap U \neq\varnothing.$$
    This shows the density of $W^s(p_0)$. 
    By the same argument, $W^u(q_0), W^u(p_1)$ and $W^s(q_1)$ are also dense in $M$.

    Generally, for any $p\in\mathrm{Per}^-(F_0)$, 
    by Lemma \ref{lemma: W^s and W^u}, $W^s(p)$ is an open submanifold of $\mathcal{F}^{cs}(p)$. 
    Since $W^u(p_1)$ is an open submanifold of $\mathcal{F}^{cu}(p_1)$ and dense in $M$, 
    we conclude that $W^s(p)\pitchfork W^u(p_1)\neq\varnothing$. 
    Now replace $p_0$ by $p$ in the above argument; it follows that $W^s(p)$ is dense in $M$. 
    A similar conclusion holds for $p\in \mathrm{Per}^-(F_1)$, $\mathrm{Per}^+(F_0)$, and $\mathrm{Per}^+(F_1)$. 
    The density implies the intersection property,
    which ends the proof of Lemma \ref{lemma: density of s and u}.
\end{proof}

For $p\in\mathrm{Per}(F_0)$, the return map $\Phi_p\in\mathrm{Diff}_\partial([0, 1])$ is assumed to be analytic. We define the order of $p$ by $\mathrm{ord}(p) := \mathrm{ord}(\Phi_p)$.

Now we prove Theorem \ref{theorem: main theorem A}.

\begin{proof}[Proof of Theorem \ref{theorem: main theorem A}]
    To prove topological transitivity,
    it suffices to show that for any two non-empty open subsets $U,\ V\subseteq M$, 
    we have $U^*\cap V^*\neq\varnothing$, where
    $$
    U^* := \bigcup_{n\in\mathbb{N}}F^n(U) 
    \quad\text{and}\quad  
    V^* := \bigcup_{n\in\mathbb{N}}F^{-n}(V).
    $$
    Note that $F(U^*)\subseteq U^*$, $F^{-1}(V^*)\subseteq V^*$.
    
    Since $F$ is weakly boundary interconnected, by Lemma \ref{lemma: density of s and u}, for any two pairs of periodic points $p_i, q_i\in M_i\ (i = 0, 1)$ 
    such that $p_0, q_1$ are central topological sinks, $p_1, q_0$ are central topological sources, we have
    \begin{align*}
        W^s(p_0)\pitchfork W^u(p_1)&\neq\varnothing;\\
        W^s(q_1)\pitchfork W^u(q_0)&\neq\varnothing.
    \end{align*}
    
    The next goal is to find $p_0\in\mathrm{Per}^-(F_0)$ and $q_0\in\mathrm{Per}^+(F_0)$ properly, 
    such that either $[\Phi_{p_0}/\Phi_{q_0}^{-1}]$ or $[\Phi_{q_0}^{-1}/\Phi_{p_0}]$,
    defined right before Proposition \ref{proposition: center intersection},
    exists and takes a value in $[0, +\infty)$, 
    so that we can apply the analysis introduced in Subsection \ref{subsec:one-dim}.

    Let $p_0\in\mathrm{Per}^-(F_0)$ have the minimal order 
    among the central topological sinks of $F_0$, 
    and $q_0\in \mathrm{Per}^+(F_0)$ have the minimal order 
    among the central topological sources of $F_0$. 
    By the analyticity, we have
    \begin{align*}
        \Phi_{p_0}(t) &= e^{-\alpha}t + o(t), \text{ or } \Phi_{p_0}(t) = t - \alpha t^n + o(t^n);\\
        \Phi_{q_0}(t) &= e^{\beta}t + o(t),   \text{ or } \Phi_{q_0}(t) = t + \beta t^{n'} + o(t^{n'});
    \end{align*}
    where $\alpha$, $\beta > 0$ and $n,n' \geq 2$. 
    
    Without loss of generality, 
    we assume $\mathrm{ord}(p_0) \geq \mathrm{ord}(q_0)$. 
    There are the cases as follows.

\noindent \textbf{Case 1. $\mathrm{ord}(p_0) > \mathrm{ord}(q_0)$.} 
    In this case we have $[\Phi_{p_0}/\Phi_{q_0}^{-1}] = 0$.

\noindent \textbf{Case 2. $\mathrm{ord}(p_0) = \mathrm{ord}(q_0) = 1$.} 
    In this case we have
    \[
    [\Phi_{p_0}/\Phi_{q_0}^{-1}] 
    = \lim_{t \to 0^+}\frac{\frac{\alpha}{1 - e^{-\alpha}}(t - e^{-\alpha}t + o(t))}{\frac{\beta}{1 - e^{-\beta}}(t - e^{-\beta}t + o(t))} 
    = \lim_{t \to 0^+}\frac{\alpha t + o(t)}{\beta t + o(t)}
    = \frac{\alpha}{\beta}.
    \]

\noindent \textbf{Case 3. $\mathrm{ord}(p_0) = \mathrm{ord}(q_0) = n > 1$.} 
    In this case we have
    \[
    [\Phi_{p_0}/\Phi_{q_0}^{-1}] 
    = \lim_{t \to 0^+}\frac{\alpha t^n + o(t^n)}{\beta t^n + o(t^n)} 
    = \frac{\alpha}{\beta}.
    \]

    Therefore, $[\Phi_{p_0}/\Phi^{-1}_{q_0}]$ exists, as long as we take the periodic points with the minimal order.
    
    For technical reasons, the hyperbolicity in the central direction of $q_0\in\mathrm{Per}^+(F_0)$ should be sufficiently weak, in the sense that
    \[\lambda^c(q_0) < \frac{\mathrm{ord}(q_0)}{\mathrm{ord}(p_0)}\lambda_{q_0}.\]
    Recall that $\lambda_{q_0} > 0$ describes the decay of the central twist; see Lemma \ref{lemma: central twist}.

    In \textbf{Case 2} and \textbf{Case 3}, $\mathrm{ord}(p_0) = \mathrm{ord}(q_0)$, and the property $\lambda^c(q_0) < \lambda_{q_0}$ holds automatically. 
    In \textbf{Case 1}, if $\mathrm{ord}(q_0) > 1$, then $\lambda^c(q_0) = 0$ and the property also holds. 
    Therefore, it remains to deal with the case $\mathrm{ord}(p_0) > \mathrm{ord}(q_0) = 1$.

    In this case, $q_0$ can be chosen to ensure that $\lambda^c(q_0)$ is arbitrarily close to zero. 

    \begin{claim}\label{claim: weak hyperbolicity}
        If $\mathrm{ord}(p_0) > \mathrm{ord}(q_0) = 1$, then we can modify $q_0$ without changing its order, such that
        \[\lambda^c(q_0) < \frac{\mathrm{ord}(q_0)}{\mathrm{ord}(p_0)}\frac{-\lambda^s\lambda^u}{\lambda^u - \lambda^s}.\]
        In particular,
        \[\lambda^c(q_0) < \frac{\mathrm{ord}(q_0)}{\mathrm{ord}(p_0)}\lambda_{q_0}.\]
    \end{claim}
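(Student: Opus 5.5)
We want a new central topological source $q\in\mathrm{Per}^+(F_0)$ with $\mathrm{ord}(q)=1$ and central exponent $\lambda^c(q)\in(0,\eta)$, where
\[\eta:=\frac{1}{\mathrm{ord}(p_0)}\frac{-\lambda^s\lambda^u}{\lambda^u-\lambda^s}>0.\]
The plan is to combine Lemma \ref{lemma: dense distribution} with the Birkhoff-sum interpretation of central exponents. We take the observable $\varphi(x):=\ln\phi_x'(0)=\ln\|DF|_{E^c(x,0)}\|$ on $N$. It is analytic, hence H\"older. For $p\in\mathrm{Per}(f)$ we have $S_\varphi f(p)=\ln\Phi_p'(0)=\pi(p)\lambda^c(p)$.

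First we need a periodic orbit with negative Birkhoff sum. Since $\mathrm{ord}(p_0)>1$, we have $\Phi_{p_0}'(0)=1$, so $S_\varphi f(p_0)=0$. That orbit is useless, so a sink of order one is needed. The quickest route is to use minimality of $p_0$'s order among sinks: this forces every topological sink of $F_0$ to have order at least $\mathrm{ord}(p_0)\geq 2$, so no periodic point has $S_\varphi<0$. If $S_\varphi f(p)\le 0$ held for all $p$, then every invariant measure would have $\int\varphi\,d\mu\le 0$. Combined with $S_\varphi f(q_0)>0$ this gives no contradiction yet, so a different argument is needed.

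The cleaner argument is to perturb via Lemma \ref{lemma: dense distribution} applied to $\varphi-c$ for a small constant $c$. We have $\inf_{\mathrm{Per}} \bar S_\varphi f\le 0$ (witnessed by $p_0$, whose average is $0$) and $\bar S_\varphi f(q_0)>0$. By the specification/shadowing property of Anosov diffeomorphisms, one can concatenate long passes near the orbit of $p_0$ with one pass near $q_0$. This produces periodic points $q$ whose Birkhoff averages $\bar S_\varphi f(q)$ are positive and arbitrarily small, via the closing lemma and Hölder continuity of $\varphi$. Alternatively, one applies Lemma \ref{lemma: dense distribution} to $\psi=\varphi-\eta/2$, which has $\bar S_\psi(p_0)<0<\bar S_\psi(q_0)$ provided $\lambda^c(q_0)>\eta/2$. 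This yields periodic points whose sums $S_\psi$ lie in a prescribed small interval. We want averages, however, so the specification construction is more direct. In it we choose $q$ shadowing $n$ times the $p_0$-orbit and once the $q_0$-orbit. Then $\pi(q)\lambda^c(q)=S_\varphi f(q_0)+O(1)$ with $\pi(q)\approx n\pi(p_0)$, so $\lambda^c(q)\to 0^+$ as $n\to\infty$. Positivity of the sum is arranged by making the shadowing errors small, since they decay exponentially with the distance from the ends of the pass.

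Finally, any $q$ with $\lambda^c(q)>0$ has $\Phi_q'(0)>1$. Hence $q\in\mathrm{Per}^+(F_0)$ with $\mathrm{ord}(q)=1=\mathrm{ord}(q_0)$, so it may replace $q_0$ in the minimal-order choice. Choosing $n$ large gives $0<\lambda^c(q)<\eta$, which is the first inequality. The second follows from Remark \ref{remark: uniform central twist}, since $\lambda_{q}\ge\frac{-\lambda^u\lambda^s}{\lambda^u-\lambda^s}$.

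The main obstacle is ensuring that the concatenated sum stays strictly positive, because the pass near $p_0$ contributes zero at leading order. I would handle this by bounding the total shadowing error of the $p_0$ passes by a constant independent of $n$, using exponential closeness and Hölder continuity. To guarantee positivity I would include $k$ copies of the $q_0$ pass, so that the fixed positive amount $kS_\varphi(q_0)$ dominates that error. Then I let $n\to\infty$ with $k$ fixed.
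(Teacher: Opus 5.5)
Your final construction is correct and is essentially the paper's proof. The paper builds a periodic pseudo-orbit from heteroclinic points $x_0\in\mathcal{F}^u(p_0)\pitchfork\mathcal{F}^s(q_0)$ and $y_0\in\mathcal{F}^u(q_0)\pitchfork\mathcal{F}^s(p_0)$, spending $2K$ steps near $q_0$ and $2L$ steps near $p_0$, and shadows it by a periodic point. It bounds the transition sums and the exponential-shadowing errors by constants independent of $K,L$, then takes $K$ large for positivity and $L=K^2$ for smallness, which is exactly your fixed $k$ with $n\to\infty$.

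The route you set aside also works directly, because there is no averaging issue. When $\lambda^c(q_0)>\eta/2$ (otherwise nothing needs to be done), applying Lemma \ref{lemma: dense distribution} to $\varphi-\eta/2$ gives a periodic $q$ with $S_{\varphi-\eta/2}f(q)\in(-\eta/2,0)$. Since $\pi(q)\geq 1$, this already forces $0<\lambda^c(q)<\eta/2$.
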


    \begin{proof}[Proof of Claim \ref{claim: weak hyperbolicity}]
        For simplicity, we assume that $p_0,q_0$ are fixed points.
        
        Recall that $F(x, t) = (f(x), \phi_x(t))$. Define $\varphi: M_0 \to \mathbb{R}$ by
        \[\varphi(x) := \ln\phi_x'(0).\]
        Then $\varphi$ is H\"older continuous and there exist constants $C_\varphi \geq 1$ and $0 < \theta < 1$ such that
        \[|\varphi(x) - \varphi(y)| \leq C_\varphi d(x, y)^\theta,\ \forall\, x, y\in M_0.\]
        Recall that we denote the Birkhoff sum for $p\in\mathrm{Per}(F_0)$ with period $\pi(p)$ by
        \[S_\varphi F_0(p) := \sum_{i = 0}^{\pi(p) - 1}\varphi(F_0^i(p)).\]
        We then have
        \(S_\varphi F_0(p) = \pi(p)\lambda^c(p)\) and \(\bar{S}_\varphi F_0(p) = \lambda^c(p).\)
        Recall that $\lambda^c(p_0) = 0<\lambda^c(q_0)$. 
        
        We modify $q_0$ by shadowing, 
        see Figure \ref{figure: periodic pseudo orbit for weaker hyperbolicity}.

        As an Anosov diffeomorphism on the nilmanifold $M_0$, $F_0$ is topologically transitive. 
        Hence $p_0$ and $q_0$ are homoclinically related, and there are points $x_0, y_0\in M_0$ with
        \[
        x_0 \in \mathcal{F}^u(p_0) \pitchfork\mathcal{F}^s(q_0) 
        \quad\text{and}\quad 
        y_0 \in \mathcal{F}^u(q_0) \pitchfork \mathcal{F}^s(p_0).
        \]
        
        \begin{figure}[htbp]
			\centering
            \begin{tikzpicture}[scale=0.8, mid arrow/.style = {decoration = {markings, mark = at position 0.5 with {\arrow{>}}}, postaction = {decorate}}]
                \draw [thick, mid arrow](-1, 5)--(6, 5);
                \draw [thick, mid arrow](5, 6)--(5, -1);
                \draw [thick, mid arrow](6, 0)--(-1, 0);
                \draw [thick, mid arrow](0, -1)--(0, 6);
                \node [circle, fill, inner sep = 1pt, label = south east: $p_0$] at (5,0){};
                \node [circle, fill, inner sep = 1pt, label = south west: $x_0$] at (0,0){};
                \node [circle, fill, inner sep = 1pt, label = north west: $q_0$] at (0,5){};
                \node [circle, fill, inner sep = 1pt, label = north east: $y_0$] at (5,5){};
                \node [label = above: $\mathcal{F}^u(q_0)$] at (2.5, 5) {};
                \node [label = right: $\mathcal{F}^s(p_0)$] at (5, 2.5) {};
                \node [label = below: $\mathcal{F}^u(p_0)$] at (2.5, 0) {};
                \node [label = left: $\mathcal{F}^s(q_0)$] at (0, 2.5) {};
                \node [circle, fill = red, inner sep = 1pt, label = below: \textcolor{red}{$F_0^{-K}(y_0)$}] at (1.25, 5) {};
                \node [label = below: $\textcolor{red}{\cdots}$] at (3,4.9) {};
                \node [circle, fill = red, inner sep = 1pt] at (3.75, 5) {};
                \node [circle, fill = red, inner sep = 1pt] at (5, 3.75) {};
                \node [label = left: $\textcolor{red}{\vdots}$] at (4.6,2.5) {};
                \node [circle, fill = red, inner sep = 1pt] at (5, 1.25) {};
                \node [circle, fill = red, inner sep = 1pt, label = above: \textcolor{red}{$F_0^{-L}(x_0)$}] at (3.75, 0) {};
                \node [label = above: $\textcolor{red}{\cdots}$] at (2,0.1) {};
                \node [circle, fill = red, inner sep = 1pt] at (1.25, 0) {};
                \node [circle, fill = red, inner sep = 1pt] at (0, 1.25) {};
                \node [label = right: $\textcolor{red}{\vdots}$] at (0.4,2.5) {};
                \node [circle, fill = red, inner sep = 1pt] at (0, 3.75) {};
            \end{tikzpicture}
			\caption{Periodic pseudo-orbit for weaker hyperbolicity}
			\label{figure: periodic pseudo orbit for weaker hyperbolicity}
		\end{figure}
        
        We then consider the following pseudo orbit segment:
        \[\mathcal{P} := \left(F_0^{-K}(y_0), \cdots, y_0, \cdots, F_0^{L - 1}(y_0),\ F_0^{-L}(x_0), \cdots, x_0, \cdots, F_0^{K - 1}(x_0)\right).\]
        Here, $K, L\in\mathbb{N}$ are large numbers to be decided. 
        The sum of $\varphi$ over $\mathcal{P}$ is
        $$
        S_\varphi(\mathcal{P}):= \sum_{x\in\mathcal{P}}\varphi(x) =\sum_{i = -K}^{L - 1}\varphi(F_0^i(y_0)) + \sum_{i = -L}^{K - 1}\varphi(F_0^i(x_0)).
        $$
        Since $F_0$ is uniformly contracting (resp. expanding) in stable (resp. unstable) manifolds and $\varphi$ is H\"older continuous, 
        we have the following convergent series:
        \begin{align*}
			P&=
            \sum\limits_{i = -\infty}^{-1} \left[\varphi(F_0^i(y_0)) - \varphi(q_0)\right] + \sum_{i = 0}^{+\infty} \left[\varphi(F_0^i(y_0)) - \varphi(p_0)\right]\\
			&+\sum_{i = -\infty}^{-1} \left[\varphi(F_0^i(x_0)) - \varphi(p_0)\right] + \sum_{i = 0}^{+\infty} \left[\varphi(F_0^i(x_0)) - \varphi(q_0)\right].
		\end{align*}
        As a result, given $\varepsilon > 0$, 
        there exist $K^0,L^0\in\mathbb{N}$, such that 
        for $K \geq K^0$ and $L \geq L^0$, 
        \begin{align}
            \left|S_\varphi(\mathcal{P}) - \left(P + 2K \lambda^c(q_0) + 2L\lambda^c(p_0)\right)\right| < \varepsilon. \label{formula: sum over pseudo orbit, q_0}
        \end{align}
        On the other hand, by the classical \textbf{exponential Anosov shadowing lemma}, 
        there are constants $0 < \delta_0 < 1$, $C_0 \geq 1$ and $0 < \mu_0 < 1$ such that for every $0 < \delta < \delta_0$, 
        every $\delta$-pseudo orbit $\{x_i: i_A\leq i \leq i_B\}\ (i_A \leq 0 \leq i_B)$ is exponentially shadowed by a periodic point $p$, 
        in the sense that
        $$d(F_0^i(p),\ x_i) < C_0\mu_0^{|i|}\delta,\ \forall\, i_A \leq i \leq i_B.$$
        As a result, there exist $K^1, L^1\in\mathbb{N}$ such that 
        when $K \geq K^1$ and $L \geq L^1$, 
        $\mathcal{P}$ is a $\delta_0\varepsilon$-pseudo orbit, 
        exponentially shadowed by a periodic point $q'_0\in\mathrm{Per}(F_0)$. 
        In fact, it suffices to take $K^1$ and $L^1$ so large that
        \begin{align*}
            &\max\left\{d(F_0^{L^1}(y_0),\ p_0),\ d(F_0^{-L^1}(x_0),\ p_0)\right\} < \frac{1}{2}\delta_0\varepsilon;\\
            &\max\left\{d(F_0^{K^1}(x_0),\ q_0),\ d(F_0^{-K^1}(y_0),\ q_0)\right\} < \frac{1}{2}\delta_0\varepsilon.
        \end{align*}
        
        Now we consider $S_\varphi F_0(q'_0)$, which is the sum of $\varphi$ over the orbit of $q'_0$. 
        Since $\varphi$ is H\"older continuous, 
        the difference between $S_\varphi F_0(q'_0)$ and $S_\varphi(\mathcal{P})$ is controlled by
        \begin{align}
            \left|S_\varphi F_0(q'_0) - S_\varphi(\mathcal{P})\right| < C_\varphi C_0^\theta\frac{2\delta_0^\theta}{1 - \mu_0^\theta}\varepsilon^\theta. \label{formula: sum over periodic orbit, q_0}
        \end{align}
        
        Combining (\ref{formula: sum over pseudo orbit, q_0}) and (\ref{formula: sum over periodic orbit, q_0}), there exists a constant $C \geq 1$ such that
        \begin{align}
            \left|S_\varphi F_0(q'_0) - \left(P + 2K\lambda^c(q_0) + 2L\lambda^c(p_0)\right)\right| < C\varepsilon^\theta. \label{formula: estimate of q'_0}
        \end{align}
        Here we can choose $K > \max\{K^0,\, K^1\}$ and $L > \max\{L^0,\, L^1\}$, so that the Birkhoff average is also controlled:
        \begin{align*}
            \lambda^c(q'_0) &= \bar{S}_\varphi F_0(q'_0) = \frac{S_\varphi F_0(q'_0)}{2K + 2L}\\
            &> \frac{2K\lambda^c(q_0) - |P| - C\varepsilon^\theta}{2K + 2L};
        \end{align*}
        \begin{align*}
            \lambda^c(q'_0) &= \bar{S}_\varphi F_0(q'_0) = \frac{S_\varphi F_0(q'_0)}{2K + 2L}\\
            & < \frac{2K\lambda^c(q_0) + |P| + C\varepsilon^\theta}{2K + 2L}.
        \end{align*}
        As long as $K$ is sufficiently large, we have $\lambda^c(q'_0) > 0$; as long as $L = K^2$ is sufficiently large, we have that $\lambda^c(q'_0)$ is arbitrarily small.

        This completes the proof of Claim \ref{claim: weak hyperbolicity}.
    \end{proof}

    We now fix the point \(q_0\), making the replacement in Claim \ref{claim: weak hyperbolicity} when necessary, 
    and assume that \(q_0\) is a fixed point of \(F_0\).
    
    In order to apply Proposition \ref{proposition: center intersection} for $f = \Phi_{p_0}$ and $g = \Phi^{-1}_{q_0}$, we also need $[f/g] = 0$ or $([f/g], 1) < \varepsilon$. Recall that in \textbf{Case 1}, we have $[f/g] = 0$.
    
    In \textbf{Case 2} or \textbf{Case 3}, we have $[f/g] = \alpha/\beta$.
    Therefore, we still need to modify $p_0$ without changing its order, so that $([\Phi_{p_0}/\Phi^{-1}_{q_0}], 1)$ is sufficiently small.
    
    Define $\varphi: M_0 \to \mathbb{R}$ by
    \[
    \begin{alignedat}{2}
        &\varphi(x) := \ln\phi_x'(0),               \qquad &\text{ when } n = 1;\\
        &\varphi(x) := \frac{1}{n!}\phi_x^{(n)}(0), \qquad &\text{ when } n \geq 2.
    \end{alignedat}
    \]
    As a result,
    $S_\varphi F_0(p_0) = \varphi(p_0) = -\alpha$ and $S_\varphi F_0(q_0) = \varphi(q_0) = \beta$.
    
    %We have the following Liv\v{s}ic reparameterization. 
    %Note that $\Phi_p = \mathrm{Id}$ for $p\in\mathrm{Per}^0(F_0)$, since $\Phi_p$ is analytic.
    \begin{claim}\label{claim: Livsic reparameterization}
        Assume that $\mathrm{ord}(p) \geq n \geq 2$ for every $p\in\mathrm{Per}(F_0)$. 
        Then there is a reparameterization $U(x, t) = (x, u_x(t))$ near $t = 0$, such that $G := U\circ F\circ U^{-1} = (f(x), \psi_x(t))$ has the following property: for any $x\in N$,
        $$
        \psi_x'(0) = 1; \quad
        \psi_x^{(m)}(0) = 0,\ \forall\, 2\leq m < n.
        $$
    \end{claim}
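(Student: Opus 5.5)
We will prove the claim by induction on $m$, killing the Taylor coefficients of the fibre maps at $t=0$ one order at a time. Each step uses a near-identity fibrewise change of variables, with the coefficient function obtained from the Liv\v{s}ic Theorem (Lemma \ref{lemma: Livsic}) and its smooth version (Remark \ref{remark: Livsic}).

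\textbf{Base step ($m=1$).} Set $\varphi_1(x):=\ln\phi_x'(0)$. For every $p\in\mathrm{Per}(f)$ we have $\mathrm{ord}(p)\geq n\geq 2$, so $\Phi_p'(0)=1$. Hence
\[
S_{\varphi_1}f(p)=\ln\Phi_p'(0)=0 \quad\text{for all } p\in\mathrm{Per}(f).
\]
By Lemma \ref{lemma: Livsic} there is a H\"older (indeed smooth, by Remark \ref{remark: Livsic}) function $w$ with $\varphi_1=w-w\circ f$. Define $u_x(t):=e^{w(x)}t$. Then
\[
\psi_x'(0)=e^{w(f(x))}\phi_x'(0)e^{-w(x)}=e^{w\circ f(x)+\varphi_1(x)-w(x)}=1 .
\]

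\textbf{Inductive step ($2\le m<n$).} Assume that after the previous conjugations
\[
\psi_x(t)=t+a(x)t^m+O(t^{m+1}).
\]
Composing such maps along a periodic orbit, the $t^m$ coefficients add at leading order, because all lower-order terms are trivial. Therefore
\[
\Phi_p(t)=t+S_af(p)\,t^m+O(t^{m+1}),
\]
where $\Phi_p$ is the return map of the conjugated system. The conjugation is fibrewise and fixes $0$, so it preserves the order of each return map. The hypothesis $\mathrm{ord}(p)\geq n>m$ then forces $S_af(p)=0$ for every periodic $p$. Liv\v{s}ic gives $a=b-b\circ f$ for some $b$.

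Now conjugate by $v_x(t):=t+c(x)t^m$, with $c$ to be chosen. To order $t^m$,
\[
v_{f(x)}\circ\psi_x\circ v_x^{-1}(t)=t+\bigl(a(x)+c(f(x))-c(x)\bigr)t^m+O(t^{m+1}).
\]
Taking $c=b$ makes the bracket vanish, since $a+b\circ f-b=0$. The map $v_x$ is a diffeomorphism near $t=0$, so this gives the next stage. Finally, let $U$ be the composition of these finitely many fibrewise maps; it is defined near $t=0$.

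\textbf{Main obstacle: regularity of the Liv\v{s}ic solutions.} The functions $a(x)$ are only as regular as $x\mapsto\phi_x^{(m)}(0)$, and the solutions $b$ are then H\"older. Two points need checking.

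First, each new coefficient in later steps must again be H\"older, so that Liv\v{s}ic applies. This holds because it is a polynomial in earlier H\"older coefficients and in derivatives of $\phi_x$ at $0$, which are H\"older (in fact $C^k$, by Remark \ref{remark: Livsic}).

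Second, the order must be unchanged under the conjugation. This holds because $U$ is fibrewise and fixes the fibre over $0$, so return maps are conjugated by $u_p$.

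The only extra care is to make $U$ well defined on a uniform neighbourhood $\{t<t_0\}$. Since $N$ is compact and the coefficients are bounded, we can take $t_0$ small enough that each $v_x$ and $e^{w(x)}t$ is injective there.
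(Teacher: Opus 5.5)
Your proposal is correct and follows essentially the same route as the paper. It first kills $\ln\phi_x'(0)$ by a Liv\v{s}ic coboundary through the rescaling $t\mapsto e^{\pm v(x)}t$; it then removes each $t^m$ coefficient, whose periodic Birkhoff sums equal the $t^m$ coefficient of $\Phi_p$ and hence vanish, by the near-identity change $t\mapsto t\pm v(x)t^m$. The differences are only cosmetic: you induct on $m$ for fixed $n$ rather than on $n$, use the opposite coboundary sign convention, and add regularity and uniform-neighbourhood remarks that the paper leaves implicit.
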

    \begin{proof}[Proof of Claim \ref{claim: Livsic reparameterization}]
        We prove the conclusion inductively. 
        
        First assume $n = 2$. 
        Then $\Phi_p'(0) = 1$ for each $p\in\mathrm{Per}(F_0)$. 
        Take $\varphi(x) = \ln\phi_x'(0)$. 
        Then the sum of $\varphi$ over every periodic orbit vanishes, 
        and hence the equation $\varphi = v\circ f - v$ has a smooth solution $v$ (see Remark \ref{remark: Livsic}). 
        Consider $U(x, t) = (x, e^{-v(x)}t)$ near $t = 0$. Then we have
        \begin{align*}
            \psi_x(t)  &= e^{-v(f(x))}\phi_x(e^{v(x)}t);\\
            \psi_x'(0) &= e^{-v(f(x)) + \varphi(x) + v(x)} = 1.
        \end{align*}
        
        Now we assume that $\mathrm{ord}(p) \geq n + 1$. 
        By the inductive hypothesis, we can assume that $\phi_x'(0) = 1$ and $\phi_x^{(m)}(0) = 0$, 
        for each $x\in N$ and $2\leq m \leq n - 1$. 
        Take $$\varphi(x) = \frac{1}{n!}\phi_x^{(n)}(0),$$ 
        and notice that the sum of $\varphi$ over a periodic orbit $\mathrm{Orb}(p)$ is exactly $\frac{1}{n!}\Phi_p^{(n)}(0)$, 
        which vanishes since $\mathrm{ord}(p) \geq n + 1$. 
        Therefore we also have a smooth solution $v$ for the equation $\varphi = v\circ f - v$ (see also Remark \ref{remark: Livsic}).
        Consider $U(x, t) = (x, t - v(x)t^n)$ near $t = 0$. 
        Then we have
        \begin{align*}
            &\psi_x(t) = t + (-v(f(x)) + \varphi(x) + v(x))t^n + o(t^n);\\
            &\psi_x^{(m)}(0) = 0,\ \forall\, 2\leq m \leq n.
        \end{align*}
        This completes the induction and finishes the proof of Claim \ref{claim: Livsic reparameterization}.
    \end{proof}
    
    As a result, under the assumption that $\mathrm{ord}(p_0) = \mathrm{ord}(q_0) = n$, if $n = 1$, then we have
    \[
    S_\varphi F_0(p) 
    = \sum_{i = 0}^{\pi(p) - 1}\varphi(F_0^i(p)) 
    = \sum_{i = 0}^{\pi(p) - 1}\ln \phi_{F_0^i(p)}'(0) 
    = \ln \Phi_p'(0);
    \]
    if $n > 1$, 
    by the choice of $n$ we can take a new coordinate near $t = 0$, 
    and conclude that $\phi_{F_0^i(p)}'(0) = 1$ and $\phi_{F_0^i(p)}^{(k)}(0)= 0$, 
    for every $0\leq i \leq \pi(p) - 1$ and $2\leq k < n$. 
    As a result,
    \[
    S_\varphi F_0(p) 
    = \sum_{i = 0}^{\pi(p) - 1}\varphi(F_0^i(p)) 
    = \frac{1}{n!}\sum_{i = 0}^{\pi(p) - 1} \phi_{F_0^i(p)}^{(n)}(0) 
    = \frac{1}{n!}\Phi_p^{(n)}(0).
    \]
    In both cases, we conclude that $S_\varphi F_0(p) < 0$ if and only if $p\in\mathrm{Per}^-(F_0)$ and $\mathrm{ord}(p) = n$; 
    and $S_\varphi F_0(p) > 0$ if and only if $p\in\mathrm{Per}^+(F_0)$ and $\mathrm{ord}(p) = n$. Moreover,
    \[-\frac{S_\varphi F_0(p)}{S_\varphi F_0(q)} = [\Phi_p/\Phi_q^{-1}]\]
    whenever $p\in\mathrm{Per}^-(F_0)$ and $q\in\mathrm{Per}^+(F_0)$ admit the minimal order $n$.
    
    Now we can deal with \textbf{Case 2} and \textbf{Case 3} where $\mathrm{ord}(p_0) = \mathrm{ord}(q_0) = n \geq 1$.
    
    \begin{claim}\label{claim: approximation}
        Fix $p_0\in\mathrm{Per}^-(F_0)$ and $q_0\in\mathrm{Per}^+(F_0)$ with the minimal order, 
        and assume that they have the same order $n$. 
        Then there exists a sequence of points $p_{0,m}\in\mathrm{Per}^-(F_0)$ with order $n$,
        such that
        %$\bar{S}_\varphi F_0(p_{0,m}) \to \bar{S}_\varphi F_0(p_0)$ and
        %$(S_\varphi F_0(p_{0,m}),\ S_\varphi F_0(q_0)) \to 0$. 
        %As a result,
        \[
        ([\Phi_{p_{0,m}}/\Phi_{q_0}^{-1}],\ 1) 
        = \left(-\frac{S_\varphi F_0(p_{0,m})}{S_\varphi F_0(q_0)},\ 1\right) 
        \to 0\ (m \to +\infty).
        \]
    \end{claim}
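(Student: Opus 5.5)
The plan is to build $p_{0,m}$ by shadowing, as in Claim \ref{claim: weak hyperbolicity}, but now tuning the Birkhoff sum of $\varphi$ instead of its average. We work in the coordinates of Claim \ref{claim: Livsic reparameterization} when $n\ge 2$. There $S_\varphi F_0(p)=\frac{1}{n!}\Phi_p^{(n)}(0)$ for every periodic point, and every periodic point has order $\ge n$. So a periodic point $p$ with $S_\varphi F_0(p)<0$ automatically lies in $\mathrm{Per}^-(F_0)$ and has order exactly $n$. The same holds for $n=1$ with $\varphi=\ln\phi'_x(0)$, since then $S_\varphi F_0(p)=\ln\Phi_p'(0)$. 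By the identity $-S_\varphi F_0(p)/S_\varphi F_0(q)=[\Phi_p/\Phi_q^{-1}]$ established above, it therefore suffices to produce periodic points $p_{0,m}$ with
\[
S_\varphi F_0(p_{0,m})<0 \quad\text{and}\quad \Bigl(\tfrac{S_\varphi F_0(p_{0,m})}{\beta},\ 1\Bigr)\to 0,
\]
where $\beta=S_\varphi F_0(q_0)>0$.

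The quickest route is Lemma \ref{lemma: dense distribution}. Since $\varphi$ is H\"older (smooth, in fact) and $S_\varphi F_0(p_0)=-\alpha<0<\beta=S_\varphi F_0(q_0)$, the set $\{S_\varphi F_0(p):p\in\mathrm{Per}(F_0)\}$ is dense in $\mathbb R$. Choose a sequence of negative irrational multiples $c_m\beta$ with $c_m\in(-\infty,0)\setminus\mathbb Q$; for instance fix $c_m\equiv c$. Density gives periodic points $p_{0,m}$ with $S_\varphi F_0(p_{0,m})\to c\beta$ and $S_\varphi F_0(p_{0,m})<0$.

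The delicate point is that $(\cdot,1)$ is not continuous. If $a_m\to c$ with $c$ irrational, then $(a_m,1)\to 0$ still holds. Indeed, for any $\eta>0$ there are $k,l$ with $0<|kc+l|<\eta/2$, and then $|ka_m+l|<\eta$ with $ka_m+l\neq0$ for large $m$. Hence $\limsup_m (a_m,1)\le\eta$, so $(\cdot,1)$ is upper semicontinuous at irrationals where it vanishes. This gives $([\Phi_{p_{0,m}}/\Phi_{q_0}^{-1}],1)\to0$.

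Should one prefer not to rely on Lemma \ref{lemma: dense distribution}, the fallback is the explicit pseudo-orbit of Claim \ref{claim: weak hyperbolicity}. Alternate $K$ turns near $q_0$ and $L$ turns near $p_0$, shadow the result by a periodic point, and use estimate \eqref{formula: estimate of q'_0}:
\[
S_\varphi F_0 \approx P-2L\alpha+2K\beta .
\]
Taking $L$ large relative to $K$ forces a negative sum, and varying $K,L$ moves the sum within a lattice $\{-2L\alpha+2K\beta\}+P$ up to a $C\varepsilon^\theta$ error, which can again be made to approximate an irrational multiple of $\beta$.

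The main obstacle is the bookkeeping that the shadowing periodic points keep order exactly $n$ and are central sinks. This is why I would stay in the Liv\v{s}ic coordinates: every periodic point has order $\ge n$ because $n$ is the minimal order among sinks and sources, and points of $\mathrm{Per}^0$ have $\Phi_p=\mathrm{Id}$ by analyticity. In those coordinates the sign of $S_\varphi F_0$ alone determines both the type and the order.
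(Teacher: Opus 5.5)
Your main argument is correct and is essentially the paper's proof: both rest on Lemma~\ref{lemma: dense distribution} together with the observation, made just before the claim, that in the Liv\v{s}ic coordinates $S_\varphi F_0(p)<0$ forces $p\in\mathrm{Per}^-(F_0)$ with $\mathrm{ord}(p)=n$. The only difference is the target value: the paper takes $-2^{-m}<S_\varphi F_0(p_{0,m})<0$, so that $(a,1)\le|1\cdot a+0|=|a|\to 0$ for $a\neq 0$ and no semicontinuity argument at an irrational limit is needed; this also makes your pseudo-orbit fallback unnecessary, and as sketched that fallback would not reach an irrational multiple of $\beta$ when $\alpha/\beta$ and $P/\beta$ are both rational.
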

    \begin{proof}[Proof of Claim \ref{claim: approximation}]
        Since $S_\varphi F_0(p_0) < 0 < S_\varphi F_0(q_0)$, by Lemma \ref{lemma: dense distribution}, we take $p_{0,m}\in\mathrm{Per}(F_0)$ such that
        \[-2^{-m} < S_\varphi F_0(p_{0,m}) < 0.\]
        As a result,
        \[\left(-\frac{S_\varphi F_0(p_{0,m})}{S_\varphi F_0(q_0)},\ 1\right) \leq \left|\frac{S_\varphi F_0(p_{0,m})}{S_\varphi F_0(q_0)}\right| \to 0.\]
        This ends the proof of Claim \ref{claim: approximation}.
    \end{proof}

Now we continue to prove $U^*\cap V^* \neq\varnothing$. 
    First, we show that the points in $V^*$ appear near the expanding central interval $I_{q_0}^c$, see Figure \ref{figure: holonomy map q}.
    \begin{figure}[htbp]
		\centering
        \begin{tikzpicture}[mid arrow/.style = {decoration = {markings, mark = at position 0.5 with {\arrow{>}}}, postaction = {decorate}}]
            \draw [thick, mid arrow, color = red] (-0.5,0.5)--(0,0);
            \draw [thick, mid arrow, color = red] (0.5,-0.5)--(0,0);
            \draw [thick, mid arrow, color = green] (0,0)--(-0.5,-0.3);
            \draw [thick, mid arrow, color = green] (0,0)--(2,1.2);
            \node [circle, fill, inner sep = 1pt, label = below: $q_0$] at (0,0) {};
            \node [label = below: $\widetilde{q}_0$] at (2,1.2) {};
            
            \draw (0,0)--(0,2);
            \draw (2,1.2)--(2,3.2);

            \draw [thick] (2,1.7)--(2,2.7);
            \draw [thick] (1.5,2.2)--(2.5,1.2);
            \draw [thick] (1.5,3.2)--(2.5,2.2);
            \draw [thick] (1.5,2.2)--(1.5,3.2);
            \draw [thick] (2.5,1.2)--(2.5,2.2);
            \node [label = center: $J_{\widetilde{q}_0}^c$] at (2.25, 1.9) {};
            \node [label = center: $\mathcal{Q}^*$] at (2.5,1) {};
            
            \draw (2,2.2) circle (32pt);
            \node [label = center: $V^*$] at (3.5, 2.2) {};

            \draw [dashed, thick] (2,1.7)--(0,0.5);
            \draw [dashed, thick] (2,2.7)--(0,1.5);
            \draw [thick, color = blue] (0,0.5)--(0,1.5);
            \node [label = center: $J_{q_0}^c$] at (-0.3, 1) {};
        \end{tikzpicture}
        \caption{Holonomy map near $q_0$}
        \label{figure: holonomy map q}
    \end{figure}
    
    Notice that $q_0$ is a central topological source-type fixed point, 
    \[W^u(q_0) = \bigcup_{k\in\mathbb{N}}F^k\left(\mathcal{F}^{cu}_{loc}(q_0)\right).\]
    Since $W^u(q_0)$ is dense in $M$ (Lemma \ref{lemma: density of s and u})
    and $F^{-k}(V^*) \subseteq V^*$ for any $k\in\mathbb{N}$, 
    we have
    \[V^*\cap \mathcal{F}^{cu}_{loc}(q_0)\neq \varnothing.\]
    Therefore, we can take
    \[\widetilde{q}_0\in \pi_0(V^*)\cap \mathcal{F}^u_{loc}(q_0) \subseteq M_0.\]
    We can further require that $V^*\cap \mathcal{F}^c(\widetilde{q}_0)$ contains a closed central interval $J_{\widetilde{q}_0}^c$ 
    satisfying $\mathcal{Q}^* := \mathcal{F}^s_{loc}(J_{\widetilde{q}_0}^c)\subseteq V^*$. 
    Let $J_{q_0}^c\subseteq \mathcal{F}^c_{loc}(q_0)$ be the image of $J_{\widetilde{q}_0}^c$ 
    under the holonomy map along the unstable leaves, restricted to $\mathcal{F}^{cu}_{loc}(q_0)$. 
    Since $q_0$ is a central topological source, 
    we may further assume that $J_{q_0}^c\subseteq I_{q_0}^c$.

    Take $\varepsilon > 0$ sufficiently small, 
    so that $|J_{q_0}^c| > L(\varepsilon)$ as in Proposition \ref{proposition: center intersection}. 
    In the case of $\mathrm{ord}(p_0) = \mathrm{ord}(q_0)$, 
    by Claim \ref{claim: approximation}, we can take $m\in\mathbb{Z}^+$ sufficiently large, such that
    \[([\Phi_{p_{0,m}}/\Phi_{q_0}^{-1}],\ 1) < \varepsilon.\] 
    With a slight abuse of notation, for a given $\varepsilon > 0$, 
    we write $p_{0,m}$ as $p_0$ for simplicity.
    We now achieve the goal of obtaining a pair of periodic points 
    $p_0\in\mathrm{Per}^-(F_0)$ and $q_0\in\mathrm{Per}^+(F_0)$ 
    satisfying $[\Phi_{p_0}/\Phi_{q_0}^{-1}] = 0$ or $([\Phi_{p_0}/\Phi_{q_0}^{-1}],\ 1) < \varepsilon$.
    
    Similarly, since $p_0$ is a central topological sink-type periodic point, 
    $$W^s(p_0) = \bigcup_{k\in\mathbb{N}}F^{-k\pi(p_0)}\left(\mathcal{F}^{cs}_{loc}(p_0)\right).$$
    Then, by the density of $W^s(p_0)$ (Lemma \ref{lemma: density of s and u}), we have
    $$U^*\cap \mathcal{F}^{cs}_{loc}(p_0)\neq \varnothing.$$
    Therefore, we can take
    $$\widetilde{p}_0\in \pi_0(U^*)\cap \mathcal{F}^s_{loc}(p_0) \subseteq M_0.$$
    Moreover, $U^*\cap \mathcal{F}^c(\widetilde{p}_0)$ contains a closed central interval $J_{\widetilde{p}_0}^c$, satisfying $\mathcal{P}^* := \mathcal{F}^u_{loc}(J_{\widetilde{p}_0}^c)\subseteq U^*$ (see Figure \ref{figure: holonomy map p}). 
    Let $J_{p_0}^c \subseteq\mathcal{F}^c(p_0)$ be the image of $J_{\widetilde{p}_0}^c$ 
    under the holonomy map along the stable leaves, restricted to $\mathcal{F}^{cs}_{loc}(p_0)$. 
    Since $p_0$ is a central topological sink, we may further assume that $J_{p_0}^c \subseteq I_{p_0}^c$.

    \begin{figure}[htbp]
		\centering
        \begin{tikzpicture}[mid arrow/.style = {decoration = {markings, mark = at position 0.5 with {\arrow{>}}}, postaction = {decorate}}]
            \draw [thick, mid arrow, color = red] (0,1)--(1,0);
            \draw [thick, mid arrow, color = red] (1.5,-0.5)--(1,0);
            \draw [thick, mid arrow, color = green] (1,0)--(1.5,0.3);
            \draw [thick, mid arrow, color = green] (1,0)--(0.5,-0.3);
            \node [circle, fill, inner sep = 1pt, label = below: $p_0$] at (1,0) {};

            \node [label = below: $\widetilde{p}_0$] at (0,1) {};

            \draw (0,1)--(0,3);
            \draw (1,0)--(1,2);
            
            \draw [thick] (-0.5,1.2)--(-0.5,2.2);
            \draw [thick] (0,1.5)--(0,2.5);
            \draw [thick] (0.5,1.8)--(0.5,2.8);
            \draw [thick] (-0.5,1.2)--(0.5,1.8);
            \draw [thick] (-0.5,2.2)--(0.5,2.8);
            \draw (0,2) circle (27pt);
            \node [label = center: $\mathcal{P}^*$] at (-0.6,1) {};
            \node [label = center: $U^*$] at (0,3.2) {};
            \node [label = center: $J_{\widetilde{p}_0}^c$] at (-0.25,1.9) {};

            \draw [dashed, thick] (0,1.5)--(1,0.5);
            \draw [dashed, thick] (0,2.5)--(1,1.5);
            \draw [thick, color = blue] (1,0.5)--(1,1.5);
            \node [label = center: $J_{p_0}^c$] at (1.3,1) {};
        \end{tikzpicture}
        \caption{Holonomy map near $p_0$}
        \label{figure: holonomy map p}
    \end{figure}

    Fix $r_0 \in \mathcal{F}^u(p_0) \pitchfork \mathcal{F}^s(q_0)$. 
    For simplicity, we denote by $\mathrm{Hol}_x^y$ the holonomy map from $x$ to $y$ 
    along stable or unstable manifolds defined on $cu$-leaves or $cs$-leaves; 
    see Figure \ref{figure: central intersection}.

    \begin{figure}[htbp]
		\centering
        \begin{tikzpicture}[mid arrow/.style = {decoration = {markings, mark = at position 0.5 with {\arrow{>}}}, postaction = {decorate}}]
            \draw [thick, mid arrow, color = green] (0,0)--(-0.5,-0.3);
            \draw [thick, mid arrow, color = green] (0,0)--(6.5,3.9);
            \draw [thick, mid arrow, color = red] (-1,1)--(0,0);
            \draw [thick, mid arrow, color = red] (0.5,-0.5)--(0,0);
            \draw (0,0)--(0,2);
            \node [circle, fill, inner sep = 1pt, label = below: $p_0$] at (0,0) {};

            \draw [thick, mid arrow, color = red] (4,4)-- (8,0);
            \draw [thick, mid arrow, color = red] (8.5,-0.5)-- (8,0);
            \draw [thick, mid arrow, color = green] (8,0)-- (7.5,-0.3);
            \draw [thick, mid arrow, color = green] (8,0)-- (9.5,0.9);
            \draw (8,0)--(8,2);
            \node [circle, fill, inner sep = 1pt, label = below: $q_0$] at (8,0) {};

            \node [circle, fill, inner sep = 1pt, label = below: $r_0$] at (5,3) {};
            \draw (5,3)--(5,5.5);

            \draw [thick, color = green] (-1,0.2)--(6,4.4);
            \draw [thick, color = red] (4.6, 4.4)--(9.2, -0.2);
            
            \draw [thick, color = blue] (0,0.3)-- (0,1.3);
            \node [label = right: $\Phi_{p_0}^{k_n}(J_{p_0}^c)$] at (0, 0.3) {};
            \draw [color = blue] (0,0.3)--(5,3.3);
            \draw [color = blue] (0,1.3)--(5,4.3);

            \draw [thick, color = blue] (8,0.6)-- (8,1.8);
            \node [label = left: $\Phi_{q_0}^{-l_n}(J_{q_0}^c)$] at (8, 0.6) {};
            \draw [color = blue] (8,0.6)--(5,3.6);
            \draw [color = blue] (8,1.8)--(5,4.8);

            \draw [thick, color = blue] (5,3.3)--(5,4.3);
            \draw [thick, color = blue] (5,3.6)--(5,4.8);

            \draw (-0.5,0.5)--(-0.5,2.5);
            \draw (8.625,0.375)--(8.625,2.375);
            \draw (5.125,3.875)--(5.125,5.875);
            
            \draw [dashed, color = blue] (0,0.3)--(-0.5,0.7);
            \draw [dashed, color = blue] (0,1.3)--(-0.5,1.7);
            
            \draw [thick, color = blue] (-0.5,0.7)--(-0.5,1.7);
            \draw [color = blue] (-0.5,0.7)--(5.125,4.075);
            \draw [color = blue] (-0.5,1.7)--(5.125,5.075);
            \node [label = center: $F^{k_n\pi(p_0)}(J_{\widetilde{p}_0}^c)$] at (-1.8,1.5) {};

            \draw [dashed, color = blue] (8,0.6)--(8.625,0.9);
            \draw [dashed, color = blue] (8,1.8)--(8.625,2.1);

            \draw [thick, color = blue] (8.625,0.9)--(8.625,2.1);
            \draw [color = blue] (8.625,0.9)--(5.125,4.4);
            \draw [color = blue] (8.625,2.1)--(5.125,5.6);
            \node [label = center: $F^{-l_n}(J_{\widetilde{q}_0}^c)$] at (9.5,1.5) {};

            \draw [thick, color = blue] (5.125,4.075)--(5.125,5.6);
            
            \node [circle, fill, inner sep = 1pt, label = right: $r_0^n$] at (5.125,3.875) {};
            \node [circle, fill, inner sep = 1pt, label = below: $u_n$] at (4.5,3.5) {};
            \node [circle, fill, inner sep = 1pt, label = below: $v_n$] at (5.625,3.375) {};
        \end{tikzpicture}
        \caption{Dynamics of central intersections}
        \label{figure: central intersection}
    \end{figure}
    
    \begin{claim}\label{claim: center intersection}
        There exist two strictly increasing subsequences $\{k_n\},\{l_n\}$ of $\mathbb{N}$, and a constant $C > 0$, such that
        \[\left|H\circ \Phi_{p_0}^{k_n}(J_{p_0}^c)\cap \Phi_{q_0}^{-l_n}(J_{q_0}^c)\right| \geq C\cdot\min\{e^{\sigma_{p_0}(k_n)},\ e^{-\sigma_{q_0}(l_n)}\},\ \forall\, n\in\mathbb{N},\]
        where
        \[H := \mathrm{Hol}_{r_0}^{q_0}\circ\mathrm{Hol}_{p_0}^{r_0},\]
        and
        \begin{align*}
            &\sigma_{p_0}(k_n) = 
            \left\{\begin{array}{ll}
                 k_n\ln\Phi_{p_0}'(0),& \mathrm{ord}(p_0) = 1; \\[1ex]
                 -\frac{\mathrm{ord}(p_0)}{\mathrm{ord}(p_0) - 1}\ln k_n,& \mathrm{ord}(p_0) \geq 2;
            \end{array}\right.\\
            &\sigma_{q_0}(l_n) = 
            \left\{\begin{array}{ll}
                 l_n\ln \Phi_{q_0}'(0),& \mathrm{ord}(q_0) = 1; \\[1ex]
                 \frac{\mathrm{ord}(q_0)}{\mathrm{ord}(q_0) - 1}\ln l_n,& \mathrm{ord}(q_0) \geq 2.
            \end{array}\right.
        \end{align*}
    \end{claim}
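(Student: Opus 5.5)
This is a direct application of Proposition \ref{proposition: center intersection} with
\[
f := \Phi_{p_0},\qquad g := \Phi_{q_0}^{-1},\qquad h := H = \mathrm{Hol}_{r_0}^{q_0}\circ\mathrm{Hol}_{p_0}^{r_0},
\]
all regarded as maps of the central coordinate near $0$, with $I := J_{p_0}^c$ and $J := J_{q_0}^c$. The plan is to check the hypotheses one by one and then translate the conclusion.

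\textbf{Regularity and form of the maps.} By analyticity of $F$, both $f$ and $g$ are real-analytic. The point $0$ is a topological sink for $f$, since $p_0\in\mathrm{Per}^-(F_0)$. It is also a topological sink for $g$, since $q_0\in\mathrm{Per}^+(F_0)$ is a central topological source and so $\Phi_{q_0}^{-1}(t)<t$ near $0$. The map $h$ is a composition of the unstable holonomy inside $\mathcal{F}^{cu}(p_0)$ and the stable holonomy inside $\mathcal{F}^{cs}(r_0)$. Identifying central leaves with $[0,1]$, it fixes $0$ because $M_0$ is invariant and saturated by $s$- and $u$-leaves. By Lemma \ref{lemma: C1-holonomy} it is a $C^1$ diffeomorphism near $0$. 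Strictly we only need $h$ defined on a neighbourhood of $0$, and the proof of the proposition only uses it there.

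\textbf{Condition $g'(0)\neq1$ or $h'(0)=1$.} If $\mathrm{ord}(q_0)=1$, then $g'(0)=e^{-\beta}\neq1$ and we are done. If $\mathrm{ord}(q_0)\ge2$, we need $h'(0)=1$, and this is the main obstacle. My first attempt is to use that on $M_0$ the central direction is $E^c(x,0)$ and the holonomies map $M_0$ to itself. Then $h'(0)$ is the ratio of central derivatives transported along the $s$/$u$ path. Write it as a limit of cocycle products via the standard formula: for the stable holonomy,
\[
(\mathrm{Hol}^s)'(0)=\lim_{n\to\infty}\prod_{i<n}\frac{\phi'_{f^i x}(0)}{\phi'_{f^i y}(0)}.
\]
By Claim \ref{claim: Livsic reparameterization} applied with $n=\mathrm{ord}(p_0)\ge \mathrm{ord}(q_0)\ge2$, after a conjugation $U$ we may assume $\phi_x'(0)\equiv1$. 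In these coordinates every factor equals $1$, so $h'(0)=1$. The conjugation is by fibrewise diffeomorphisms $u_x$ with $u_x'(0)$ bounded away from $0$ and $\infty$. It therefore preserves the estimates of the claim up to constants, changes $f,g$ only by analytic conjugacy (orders unchanged), and commutes with the holonomies, since $U$ preserves the $s,u$ foliations of the skew product.

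\textbf{Arithmetic condition.} By the preceding case analysis and Claim \ref{claim: approximation}, either $[f/g]=0$ (Case 1) or $([f/g],1)<\varepsilon$ (Cases 2--3). The choice of $\varepsilon$ was made so that $|J_{q_0}^c|>L(\varepsilon)$, where $L$ depends only on $g$, which is fixed before $p_0$ is modified. This last point needs a sentence: $q_0$ is fixed first and $p_0$ is replaced afterwards, so $L(\varepsilon)$ is legitimately determined.

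\textbf{Conclusion.} Proposition \ref{proposition: center intersection} then gives strictly increasing $\{k_n\},\{l_n\}$ and $C>0$ with
\[
|h\circ f^{k_n}(I)\cap g^{l_n}(J)|\ge C\min\{e^{\sigma_f(k_n)},e^{\sigma_g(l_n)}\}.
\]
Finally $\sigma_f=\sigma_{p_0}$ by definition. Also $\sigma_g(l)=-\sigma_{q_0}(l)$, because $\ln g'(0)=-\ln\Phi_{q_0}'(0)$ and $\mathrm{ord}(g)=\mathrm{ord}(\Phi_{q_0})$. This gives exactly the claimed bound.
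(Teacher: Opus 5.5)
Your proposal is correct and is essentially the paper's own proof: you apply Proposition~\ref{proposition: center intersection} with $f=\Phi_{p_0}$, $g=\Phi_{q_0}^{-1}$, $h=H$, $I=J_{p_0}^c$, $J=J_{q_0}^c$, and when $g'(0)=1$ you pass to the coordinates of Claim~\ref{claim: Livsic reparameterization}, where $\phi_x'(0)\equiv 1$ forces $h'(0)=1$. Your cocycle formula for the holonomy derivative and the check $\sigma_g=-\sigma_{q_0}$ fill in what the paper leaves implicit.

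There are two small corrections. First, Claim~\ref{claim: Livsic reparameterization} must be invoked with $n=2$ (or $n=\mathrm{ord}(q_0)$), not with $n=\mathrm{ord}(p_0)$. When $\mathrm{ord}(p_0)>\mathrm{ord}(q_0)$ the hypothesis of that claim fails at $q_0$ for $n=\mathrm{ord}(p_0)$. With $n=2$ it holds, because every periodic point then has order $\geq\mathrm{ord}(q_0)\geq 2$; neutral ones have $\Phi_p=\mathrm{Id}$ by analyticity.

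Second, $U$ does not preserve the $s,u$-foliations of $F$; it only carries them to those of $G$. Moreover, $[f/g]$ is not invariant under the fibrewise rescaling $t\mapsto e^{-v(x)}t$ inside $U$ when the order is $\geq 2$. So the whole application, including the arithmetic condition from Claim~\ref{claim: approximation}, should be carried out in the new coordinates, as the paper implicitly does.
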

    
    \begin{proof}[Proof of Claim \ref{claim: center intersection}]
        Note that $[\Phi_{p_0}/\Phi_{q_0}^{-1}] = 0$ or $([\Phi_{p_0}/\Phi_{q_0}^{-1}],\ 1) < \varepsilon$. 
        The conclusion then follows from Proposition \ref{proposition: center intersection}, by taking
        \[f = \Phi_{p_0}|_{I_{p_0}};\quad g = \Phi_{q_0}^{-1}|_{I_{q_0}^c};\quad h = H.\]
        The initial intervals are $I = J_{p_0}^c$ and $J = J_{q_0}^c$. 
        We also note that when $g'(0) = 1$, by the choice of $p_0$ and $q_0$, 
        we have that $\mathrm{ord}(p) \geq 2$ for any $p\in\mathrm{Per}(F_0)$. 
        Therefore, by taking a proper coordinate (Claim \ref{claim: Livsic reparameterization}), 
        we have $\phi_x'(0) \equiv 1$ for any $x\in N$. 
        In particular, $h'(0) = 1$.
    \end{proof}

    As a result, there exists a constant $C > 0$ such that
    $$\left|J_{p_0}^n\cap J_{q_0}^n\right|\geq C\cdot\min\left\{e^{\sigma_{p_0}(k_n)},\ e^{-\sigma_{q_0}(l_n)}\right\},\forall\, n\in\mathbb{N},$$
    where
    \begin{align*}
        J_{p_0}^n &:= \mathrm{Hol}_{p_0}^{r_0}\circ \Phi_{p_0}^{k_n}(J_{p_0}^c);\\
        J_{q_0}^n &:= (\mathrm{Hol}_{r_0}^{q_0})^{-1}\circ \Phi_{q_0}^{-l_n}(J_{q_0}^c).
    \end{align*}
    Now consider the intersection between $F^{k_n\pi(p_0)}(\mathcal{F}^u_{loc}(J_{\widetilde{p}_0}^c))$ and $F^{-l_n}(\mathcal{F}^s_{loc}(J_{\widetilde{q}_0}^c))$. Since $F_0$ is an Anosov diffeomorphism on a nilmanifold, for $n$ sufficiently large, their projections on $M_0$, $\pi_0(F^{k_n\pi(p_0)}(\mathcal{F}^u_{loc}(J_{\widetilde{p}_0}^c)))$ and $\pi_0(F^{-l_n}(\mathcal{F}^s_{loc}(J_{\widetilde{q}_0}^c)))$, intersect at $r_0^n\in M_0$, which converges to $r_0$ as $n \to +\infty$. Moreover, we have
    \begin{align*}
        u_n &\in \mathcal{F}^u_{loc}(r_0^n)\pitchfork \mathcal{F}^s_{loc}(r_0),\\
        v_n &\in \mathcal{F}^s_{loc}(r_0^n)\pitchfork \mathcal{F}^u_{loc}(r_0).
    \end{align*}
    For simplicity, we denote the intersections by
    \begin{align*}
        \widehat{J}_{p_0}^n &:= F^{k_n\pi(p_0)}(\mathcal{F}^u_{loc}(J_{\widetilde{p}_0}^c))\cap \mathcal{F}^c(r_0^n);\\
        \widehat{J}_{q_0}^n &:= F^{-l_n}(\mathcal{F}^s_{loc}(J_{\widetilde{q}_0}^c)) \cap \mathcal{F}^c(r_0^n).
    \end{align*}
    Our final goal is to show that $\widehat{J}_{p_0}^n\cap \widehat{J}_{q_0}^n\neq\varnothing$; 
    see Figure \ref{figure: central perturbations}.

    \begin{figure}[htbp]
		\centering
        \begin{tikzpicture}[mid arrow/.style = {decoration = {markings, mark = at position 0.5 with {\arrow{>}}}, postaction = {decorate}}]
            \draw [thick, color = green] (-0.5,-0.3)--(2.5,1.5);
            \draw [thick, color = green] (-2,1.2)--(1,3);
            \draw [thick, color = red] (-2,2)--(0.5,-0.5);
            \draw [thick, color = red] (0,3.2)--(2.5,0.7);
            \node [circle, fill, inner sep = 1pt, label = below: $r_0$] at (0,0) {};
            \node [circle, fill, inner sep = 1pt, label = below: $r_0^n$] at (0.5,2.7) {};
            \node [circle, fill, inner sep = 1pt, label = below: $u_n$] at (-1.5,1.5) {};
            \node [circle, fill, inner sep = 1pt, label = below: $v_n$] at (2,1.2) {};
            \draw (0,0)--(0,2);
            \draw (-1.5,1.5)--(-1.5,3.5);
            \draw (0.5,2.7)--(0.5,4.7);
            \draw (2,1.2)--(2,3.2);

            \draw [thick, color = blue] (0,0.3)--(0,1.3);
            \draw [thick, color = blue] (0,0.6)--(0,1.6);
            \draw [color = blue] (0,0.3)--(-0.5,0);
            \draw [color = blue] (0,1.3)--(-0.5,1);
            \draw [color = blue] (0,0.6)--(0.5,0.1);
            \draw [color = blue] (0,1.6)--(0.5,1.1);
            
            \draw [thick, color = blue] (0.5,3)--(0.5,4);
            \draw [color = blue] (0.5,3)--(-2,1.5);
            \draw [color = blue] (0.5,4)--(-2,2.5);
            \draw [dashed, color = blue] (-1.5,1.8)--(0,0.1);
            \draw [dashed, color = blue] (-1.5,2.8)--(0,1.1);
            
            \draw [thick, color = blue] (0.5,3.3)--(0.5,4.5);
            \draw [color = blue] (0.5,3.3)--(2.5,1.3);
            \draw [color = blue] (0.5,4.5)--(2.5,2.5);
            \draw [dashed, color = blue] (2,1.8)--(0,0.7);
            \draw [dashed, color = blue] (2,3)--(0,1.9);

            \node [label = center: $\widehat{J}_{p_0}^n$] at (0.1,3.3) {};
            \node [label = center: $\widehat{J}_{q_0}^n$] at (0.8,3.7) {};
            \node [label = center: $J_{p_0}^n$] at (-0.4,0.7) {};
            \node [label = center: $J_{q_0}^n$] at (0.3,0.8) {};
        \end{tikzpicture}
        \caption{Dynamics near $r_0$}
        \label{figure: central perturbations}
    \end{figure}
    
    From Lemma \ref{lemma: C1-holonomy} (the $C^1$ property of holonomy maps) 
    and Lemma \ref{lemma: central twist} (the estimate of central twists), 
    there exists a constant $C\geq 1$ such that
    \begin{align*}
        \mathrm{Hol}_{u_n}^{r_0} &\circ \mathrm{Hol}_{r_0^n}^{u_n}(\widehat{J}_{p_0}^n) \text{ is a } Ce^{k_n\pi(p_0)\lambda_{p_0}}\text{-perturbation of } J_{p_0}^n;\\
        \mathrm{Hol}_{v_n}^{r_0} &\circ \mathrm{Hol}_{r_0^n}^{v_n}(\widehat{J}_{q_0}^n) \text{ is a } Ce^{-l_n\lambda_{q_0}} \text{-perturbation of } J_{q_0}^n,
    \end{align*}
    as intervals in $\mathcal{F}^c(r_0)$, where
    \begin{align*}
        \lambda_{p_0} &:= \frac{\lambda^u(p_0) - \lambda^c(p_0)}{\lambda^u(p_0) - \lambda^s(p_0)}\lambda^s(p_0) < \lambda^c(p_0) \leq 0,\\
        \lambda_{q_0} &:= \frac{-\lambda^s(q_0) + \lambda^c(q_0)}{-\lambda^s(q_0) + \lambda^u(q_0)}\lambda^u(q_0) > \lambda^c(q_0) \geq 0.
    \end{align*}
    Again, by the argument of central twist in Lemma \ref{lemma: central twist}, 
    for $n$ sufficiently large and $x\in \widehat{J}_{q_0}^n$, we can get
    \begin{align*}
        &d_c\left(\mathrm{Hol}_{v_n}^{r_0}\circ \mathrm{Hol}_{r_0^n}^{v_n}(x),\ \mathrm{Hol}_{u_n}^{r_0}\circ \mathrm{Hol}_{r_0^n}^{u_n}(x)\right)\\
        \leq& C\left[d_u(r_0^n, u_n) + d_s(u_n, r_0) + d_s(r_0^n, v_n) + d_u(v_n, r_0)\right]\\
        \leq& C^2\left[e^{k_n\pi(p_0)\lambda^s(p_0)} + e^{-l_n\lambda^u(q_0)}\right].
    \end{align*}
    Recall that the length of the intersection is controlled by
    $$\left|J_{p_0}^n\cap J_{q_0}^n\right|\geq C\cdot\min\left\{e^{\sigma_{p_0}(k_n)},\ e^{-\sigma_{q_0}(l_n)}\right\},$$
    and
    \[\sup_n\left|\frac{\sigma_{p_0}(k_n)}{\mathrm{ord}(p_0)} -\frac{-\sigma_{q_0}(l_n)}{\mathrm{ord}(q_0)}\right| < + \infty.\]
    \begin{claim}\label{claim: higher order}
        The decay of perturbations have higher order than the decay of the length of intersection.
    \end{claim}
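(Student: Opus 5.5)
The plan is to compare, along the sequences $\{k_n\}$, $\{l_n\}$ of Claim \ref{claim: center intersection}, the total perturbation size
\[
\delta_n := C\big(e^{k_n\pi(p_0)\lambda_{p_0}} + e^{-l_n\lambda_{q_0}}\big) + C^2\big(e^{k_n\pi(p_0)\lambda^s(p_0)} + e^{-l_n\lambda^u(q_0)}\big)
\]
with the lower bound $\ell_n := C\min\{e^{\sigma_{p_0}(k_n)}, e^{-\sigma_{q_0}(l_n)}\}$ for $|J_{p_0}^n\cap J_{q_0}^n|$. We must show $\delta_n/\ell_n\to 0$.

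Since $\lambda^s(p_0)<\lambda_{p_0}$ and $\lambda^u(q_0)>\lambda_{q_0}$, the last two terms of $\delta_n$ are dominated by the first two. It therefore suffices to prove four limits:
\begin{align*}
e^{k_n\pi(p_0)\lambda_{p_0}}\big/e^{\sigma_{p_0}(k_n)} &\to 0, &
e^{k_n\pi(p_0)\lambda_{p_0}}\big/e^{-\sigma_{q_0}(l_n)} &\to 0,\\
e^{-l_n\lambda_{q_0}}\big/e^{-\sigma_{q_0}(l_n)} &\to 0, &
e^{-l_n\lambda_{q_0}}\big/e^{\sigma_{p_0}(k_n)} &\to 0.
\end{align*}
The two diagonal ratios are direct. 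If the order is $1$, then $\sigma_{p_0}(k)=k\pi(p_0)\lambda^c(p_0)$ with $\lambda_{p_0}<\lambda^c(p_0)$, and likewise $\sigma_{q_0}(l)=l\lambda^c(q_0)<l\lambda_{q_0}$. If the order is at least $2$, then $\sigma$ is logarithmic, and an exponential decay beats any power. The cross ratios are where the relation
\[
\sup_n\left|\frac{\sigma_{p_0}(k_n)}{\mathrm{ord}(p_0)} + \frac{\sigma_{q_0}(l_n)}{\mathrm{ord}(q_0)}\right|<\infty
\]
from Claim \ref{claim: center intersection} (via Proposition \ref{proposition: center intersection}) is used, to convert a rate in $k_n$ into a rate in $l_n$ and back.

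I will split into the cases already used above, with $\mathrm{ord}(p_0)\ge\mathrm{ord}(q_0)$.

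\emph{Both orders equal to $1$.} The relation says $k_n\pi(p_0)\lambda^c(p_0) = -l_n\lambda^c(q_0)+O(1)$. Hence
\[
e^{k_n\pi\lambda_{p_0}}\big/e^{-l_n\lambda^c(q_0)} = e^{k_n\pi(\lambda_{p_0}-\lambda^c(p_0))+O(1)}\to 0,
\]
and symmetrically for the other cross ratio.

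\emph{Both orders equal to $n\ge 2$.} The relation gives $k_n\asymp l_n$. All the $\sigma$'s are then $-\frac{n}{n-1}\ln(\cdot)$, so polynomial decay is compared with exponential decay and the cross ratios tend to zero.

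\emph{$\mathrm{ord}(p_0)=m>\mathrm{ord}(q_0)\ge 2$.} The relation is a power relation $k_n^{a}\asymp l_n^{b}$, so again exponentials beat powers.

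\emph{$\mathrm{ord}(p_0)=m>\mathrm{ord}(q_0)=1$.} This is the main obstacle. The relation reads
\[
-\tfrac{1}{m-1}\ln k_n = -l_n\lambda^c(q_0)+O(1), \qquad\text{i.e.}\qquad k_n\asymp e^{(m-1)l_n\lambda^c(q_0)}.
\]
The ratio $e^{k_n\pi\lambda_{p_0}}/e^{-l_n\lambda^c(q_0)}$ tends to zero because $k_n$ grows exponentially in $l_n$. The delicate ratio is
\[
\frac{e^{-l_n\lambda_{q_0}}}{k_n^{-m/(m-1)}}\asymp e^{-l_n(\lambda_{q_0}-m\lambda^c(q_0))}.
\]
This tends to zero precisely when $\lambda^c(q_0)<\frac{\mathrm{ord}(q_0)}{\mathrm{ord}(p_0)}\lambda_{q_0}$, which is exactly what Claim \ref{claim: weak hyperbolicity} arranged. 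I will emphasize that this is where that modification of $q_0$ is needed.

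Once $\delta_n=o(\ell_n)$ is established, the conclusion follows. Pulling back along the holonomies, $\widehat J_{p_0}^n$ and $\widehat J_{q_0}^n$ are sent by the $C^1$ holonomies of Lemma \ref{lemma: C1-holonomy}, with derivatives uniformly bounded, to intervals in $\mathcal{F}^c(r_0)$. These intervals lie within $\delta_n$ of $J_{p_0}^n$ and $J_{q_0}^n$, respectively, and the discrepancy between the $u_n$-route and the $v_n$-route is also $O(\delta_n)$. Since the overlap $J_{p_0}^n\cap J_{q_0}^n$ has length $\ell_n\gg 4\delta_n$, the images still overlap for large $n$, so $\widehat J_{p_0}^n\cap\widehat J_{q_0}^n\neq\varnothing$. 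Consequently $U^*\cap V^*\neq\varnothing$.
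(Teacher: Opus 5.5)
Your proposal is correct and follows essentially the same route as the paper. You discard the holonomy errors because $\lambda^s(p_0)<\lambda_{p_0}$ and $\lambda^u(q_0)>\lambda_{q_0}$, then use $\sup_n|\sigma_{p_0}(k_n)/\mathrm{ord}(p_0)+\sigma_{q_0}(l_n)/\mathrm{ord}(q_0)|<\infty$ in a case analysis on the orders, and the case $\mathrm{ord}(p_0)>\mathrm{ord}(q_0)=1$ is exactly where Claim \ref{claim: weak hyperbolicity} is needed. The only difference is cosmetic: the paper first collapses the minimum to $\exp\bigl(-\frac{\mathrm{ord}(p_0)}{\mathrm{ord}(q_0)}\sigma_{q_0}(l_n)\bigr)$ using $\mathrm{ord}(p_0)\ge\mathrm{ord}(q_0)$, whereas you check the four ratios separately, which is somewhat more explicit.
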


    \begin{proof}[Proof of Claim \ref{claim: higher order}]
        There are two perturbations: 
        the central twist, $e^{k_n\pi(p_0)\lambda_{p_0}}$, $e^{-l_n\lambda_{q_0}}$; 
        the error from the holonomy, $e^{k_n\pi(p_0)\lambda^s(p_0)}$, $e^{-l_n\lambda^u(q_0)}$. Since
        \[0 > \lambda_{p_0} > \lambda^s(p_0) \quad\text{and}\quad 0 < \lambda_{q_0} < \lambda^u(q_0),\]
        we only need to show that
        \[\max\{e^{k_n\pi(p_0)\lambda_{p_0}}, e^{-l_n\lambda_{q_0}}\} = o\left(\min\{e^{\sigma_{p_0}(k_n)}, e^{-\sigma_{q_0}(l_n)}\}\right).\]
        Notice that
        \[\exp(\sigma_{p_0}(k_n)) \asymp \exp\left(-\frac{\mathrm{ord}(p_0)}{\mathrm{ord}(q_0)}\sigma_{q_0}(l_n)\right).\]
        Since $\mathrm{ord}(p_0) \geq \mathrm{ord}(q_0)$, it suffices to show that
        \[\max\{e^{k_n\pi(p_0)\lambda_{p_0}}, e^{-l_n\lambda_{q_0}}\} = o\left(\exp\left(-\frac{\mathrm{ord}(p_0)}{\mathrm{ord}(q_0)}\sigma_{q_0}(l_n)\right)\right).\]
        There are the following cases.
        
    \noindent \textbf{Case 1}: $\mathrm{ord}(p_0) \geq \mathrm{ord}(q_0) \geq 2$. In this case, we have
        \[k_n^{-\frac{\mathrm{ord}(p_0)}{\mathrm{ord}(p_0) - 1}} \asymp l_n^{-\frac{\mathrm{ord}(p_0)}{\mathrm{ord}(q_0)}\frac{\mathrm{ord}(q_0)}{\mathrm{ord}(q_0) - 1}}.\]
        After taking logarithms, the left-hand side tends to \(-\infty\) at a polynomial rate in \(l_n\), 
        whereas the right-hand side tends to \(-\infty\) at a logarithmic rate.
        
    \noindent \textbf{Case 2}: $\mathrm{ord}(p_0) = \mathrm{ord}(q_0) = 1$. In this case, we have
        \[e^{k_n\pi(p_0)\lambda^c(p_0)} \asymp e^{-l_n\lambda^c(q_0)}.\]
        Therefore, the exponent of the left side is $\max\{\frac{-\lambda^c(q_0)}{\lambda^c(p_0)}\lambda_{p_0}, -\lambda_{q_0}\}$, smaller than the exponent of the right side, $-\lambda^c(q_0)$.

    \noindent \textbf{Case 3}: $\mathrm{ord}(p_0) > \mathrm{ord}(q_0) = 1$. In this case, we have
        \[k_n^{-\frac{\mathrm{ord}(p_0)}{\mathrm{ord}(p_0) - 1}} \asymp \exp\left(-\frac{\mathrm{ord}(p_0)}{\mathrm{ord}(q_0)}\lambda^c(q_0)l_n\right).\]
        Therefore, the exponent of the left side is $-\lambda_{q_0}$, smaller than the exponent of the right side, $-\frac{\mathrm{ord}(p_0)}{\mathrm{ord}(q_0)}\lambda^c(q_0)$, which is ensured by Claim \ref{claim: weak hyperbolicity}.
    \end{proof}
    We then conclude that the perturbations have higher order than the length of intersection. Hence, there exists $n_0\in\mathbb{N}$, such that 
    $\widehat{J}_{p_0}^n\cap \widehat{J}_{q_0}^n\neq\varnothing$, for any $n \geq n_0$. 
    Since $\widehat{J}_{p_0}^n\subseteq U^*$ and $\widehat{J}_{q_0}^n\subseteq V^*$, 
    we conclude that $U^*\cap V^*\neq\varnothing$. 

    Every positive iterate of \(F\) is again an analytic partially hyperbolic skew-product diffeomorphism. 
    Its periodic fiber return maps are positive iterates of those of \(F\), 
    so the central topological sink-type and source-type are preserved. 
    The stable and unstable sets are also unchanged, and hence the weak boundary interconnection is preserved. 
    Applying the transitivity argument above to every positive iterate of \(F\) proves the total transitivity.
    
    This ends the whole proof of Theorem \ref{theorem: main theorem A}.
\end{proof}

\section*{Acknowledgement}
The authors are grateful to Yi Shi for his valuable guidance.
W. Li is partially supported by the NSFC (12571203);
M. Xia is partially supported by the NSFC (12501238) 
and the Fundamental Research Funds for the Central Universities (DUT24RC(3)112). 
The authors used ChatGPT 6, whose use was limited to 
checking computational details 
and refining a few technical points, 
to conduct adversarial review of the preliminary version of this manuscript.

\bibliographystyle{amsalpha}
\bibliography{TAPHI}

\end{CJK}
\end{document}